\documentclass[reqno]{amsart}
\usepackage{amsfonts}
\usepackage{amssymb,latexsym}
\usepackage{amsmath,color}
\usepackage{amsthm}
\usepackage{epsfig}
\usepackage{graphicx}
\usepackage{hyperref}
\usepackage{titletoc}
\usepackage{fullpage}
\usepackage{palatino,mathpazo}
\numberwithin{equation}{section}
%\newcounter{mnotecount}[section]
\newtheorem{theorem}{Theorem}[section]

\newtheorem{lemma}[theorem]{Lemma}

\theoremstyle{definition}

\newtheorem{remark}{Remark}[section]

\def\XXint#1#2#3{{\setbox0=\hbox{$#1{#2#3}{\int}$}
     \vcenter{\hbox{$#2#3$}}\kern-.5\wd0}}

\newcommand{\ba}{\begin{array}}
    \newcommand{\ea}{\end{array}}

\newcommand{\ds}{\displaystyle}

\begin{document}

\title[Liouville theorems for $\chi$-type system]{Liouville type theorems and periodic solutions for $\chi^{(2)}$ type systems with non-homogeneous nonlinearities}

\author[A. Jevnikar]{Aleks Jevnikar}
\address{Aleks Jevnikar, Department of Mathematics, Computer Science and Physics,
    University of Udine, Via delle Scienze 206, 33100 Udine, Italy}
\email{aleks.jevnikar@uniud.it}

\author[J. Wang]{Jun Wang}
\address{ Jun ~Wang,~School of Mathematical Science, Jiangsu University, Zhenjiang,
Jiangsu, 212013, P.R. China} \email{wangmath2011@126.com}

\author[W. Yang]{ Wen Yang}
\address{\noindent Wen ~Yang,~Wuhan Institute of Physics and Mathematics, Chinese Academy of Sciences, P.O. Box 71010, Wuhan 430071, P. R. China; Innovation Academy for Precision Measurement Science and Technology, Chinese Academy of Sciences, Wuhan 430071, P. R. China.}
\email{wyang@wipm.ac.cn}

\begin{abstract}
In the present paper we derive Liouville type results and existence of periodic solutions for $\chi^{(2)}$ type systems with non-homogeneous nonlinearities. Moreover, we prove both universal bounds as well as singularity and decay estimates for this class of problems. In this study, we have to face new difficulties due to the non-homogenous nonlinearities. To overcome this issue, we carry out delicate integral estimates for this class of nonlinearities and modify the usual scaling and blow up arguments. This seems to be the first result for parabolic systems with non-homogeneous nonlinearities.

\

\noindent {\bf Keywords}:{ Liouville type results; Parabolic system; $\chi^{(2)}$ system; Non-homogeneous nonlinearity; A priori estimates; Periodic solutions.}

\end{abstract}

\maketitle

\section{Introduction}
In the present paper we consider the following parabolic systems
with non-homogeneous nonlinearities \begin{equation}\label{auto-1}
\begin{cases}
\ds u_t-\Delta
u=\mu_1u^p+\beta uv,\ &x\in\Omega,\ t\in(0, T),\\
\ds v_t-\Delta v=\mu_2v^p+\frac{\beta}{2} u^2,\ &x\in\Omega,\
t\in(0, T),
\end{cases}
\end{equation}
and
\begin{equation}\label{auto-2}
\begin{cases}
\ds u_t-\Delta
u=\mu_1u^p+\beta vw,\ &x\in\Omega,\ t\in(0, T),\\
\ds v_t-\Delta v=\mu_2v^p+\beta uw,\ &x\in\Omega,\ t\in(0, T),\\
\ds w_t-\Delta w=\mu_3w^p+\beta uv,\ &x\in\Omega,\ t\in(0, T),
\end{cases}
\end{equation}
where $p>1$ and $\Omega$ is a smooth domain in
$\mathbb{R}^{N}(1\leq N\leq5)$. The later system \eqref{auto-2} can be seen as a parabolic counterpart of
the three coupled nonlinear Schr\"odinger system below
\begin{equation}
\label{aj-1}
\begin{cases}
\frac{1}{\sqrt{-1}}\frac{\partial u_{1}}{\partial t}=-\Delta u_{1}-\mu_{1}|u_{1}|^{p-1}u_{1}-\beta\overline{u_{2}}u_{3},\\
\frac{1}{\sqrt{-1}}\frac{\partial u_{2}}{\partial t}=-\Delta u_{2}-\mu_{2}|u_{2}|^{p-1}u_{2}-\beta\overline{u_{1}}u_{3},\\
\frac{1}{\sqrt{-1}}\frac{\partial u_{3}}{\partial t}=-\Delta
u_{3}-\mu_{3}|u_{3}|^{p-2}u_{3}-\beta u_{1}u_{2},
\end{cases}
\end{equation}
where $u_{i}(i=1,2,3)$ are complex valued functions of
$(t,x)\in\mathbb{R}\times\mathbb{R}^{N}$, $p>1$, $N\leq3$,
$\mu_{i}>0(i=1,2,3)$ and $\beta\in\mathbb{R}$. System \eqref{aj-1} is a reduced system studied in
\cite{Colin-2004-Funkcial,Colin-2009-Annals,Mathieu-Colin-2012-SIAM,Pomponio-2010-JMP}
and related to the Raman amplification in a plasma. It is an
unstable phenomenon happening when an incident laser field
propagates into a plasma. This kind of model was first introduced by Russell et al. \cite{Russell-1999-PhyPlasmas} to describe the Raman scattering in a plasma. In the paper \cite{Colin-2004-DIF}, a modified model was derived to describing nonlinear interaction between a laser beam and a plasma. From the physical point
of view, when an incident laser field enters a plasma, it is
backscattered by a Raman type process. These two waves interact to yield an electronic plasma wave. The three waves combine to generate variation of the density of the ions which has itself an influence on the three proceedings waves. The system describing this phenomenon is composed by three Schr\"odinger equations coupled to a wave equation and reads in a suitable dimensionless form. In fact, system
\eqref{aj-1} originates from the following so-called $\chi^{(2)}$ system, see \cite{Buryak-2002-PhysRep} for the derivation of this system and its background
\begin{equation}
\label{aj-2}
\begin{cases}
\frac{1}{\sqrt{-1}}\frac{\partial u_1}{\partial t}+\Delta u_1-\lambda_1 u_1+\beta\overline{u_2}u_3=0,\quad &x\in\mathbb{R}^N,\\
\frac{1}{\sqrt{-1}}\frac{\partial u_2}{\partial t}+\Delta u_2-\lambda_2 u_2+\beta u_3\overline{u_1}=0,\quad &x\in\mathbb{R}^N,\\
\frac{1}{\sqrt{-1}}\frac{\partial u_3}{\partial t}+\Delta u_3-\lambda_3u_3+\beta u_1u_2=0,\quad &x\in\mathbb{R}^N.\\
\end{cases}
\end{equation}
For the purpose of modeling the nonlinear effects, one replaces the
linear terms $\lambda_iu_i$ in \eqref{aj-2} by
$\mu_i|u_i|^{p-1}u_i$. For a complete description of the model \eqref{aj-1}
(as well as a precise description of the physical coefficients) and
the $\chi^{(2)}$ system \eqref{aj-2}, we refer the readers to
\cite{Buryak-2002-PhysRep,Colin-2004-DIF,Colin-2004-JCAM,Colin-2011-ESAIM,Colin-2009-Annals,Zhao-Shi-2015-CVPDE}
and the references therein. A system similar to \eqref{aj-1} also
appears as an optics model with quadratic nonlinearity, see
\cite{Yew-2000-IUMJ,Yew-2001-JDE}.

Equations \eqref{auto-1}-\eqref{auto-2} can be seen as a special form of the following general parabolic system
\begin{equation}\label{a-j3}
\begin{cases}
\frac{\partial u_1}{\partial_t}-\Delta u_1=f_1(x, u_1,u_2,u_3),\   \ &(x,t)\in \Omega\times(T_0,\infty),\\
\frac{\partial u_2}{\partial_t}-\Delta u_2=f_2(x, u_1,u_2,u_3),\   \ &(x,t)\in \Omega\times(T_0,\infty),\\
\frac{\partial u_3}{\partial_t}-\Delta u_3=f_3(x, u_1,u_2,u_3),\   \
&(x,t)\in \Omega\times(T_0,\infty),
\end{cases}
\end{equation}
where $\Omega\subseteq\mathbb{R}^{N}$ is an open set. The system
\eqref{a-j3} has been studied in various mathematical
directions. For example, local and global existence
\cite{Amann-1985-Cerell,Morgan-1989-SIAM}, H\"older regularity
\cite{Dancer-Kelei-2011-JDE}, symmetry properties
\cite{Foldes-Polacik-2009}, blow-up behavior \cite{Merle-Zaag-2000},
and Liouville type theorems
\cite{Foldes-Polacik-2009,Phan-2005-DCDS,Souplet-2016-Math-Ann,Quittner-Mathann-2016,Quittner-JDE-2016}. Among all conclusions the Liouville type results has been very much investigated in the past decade and such result could be used to describe the asymptotic behavior of
the solution to \eqref{a-j3} for long time behavior. One purpose of  this paper is to prove Liouville type theorems for the
problems \eqref{auto-1}-\eqref{auto-2} and use them to show several qualitative behavior of the solutions.

\medskip

In order to introduce our results we first recall the known facts
for the  single parabolic equation
\begin{equation}\label{a-j4}
u_t-\Delta u=u^p,\ (x, t)\in\Omega\times(0, T),
\end{equation}
where $\Omega\subseteq\mathbb{R}^{N}$ is any smooth domain (bounded or unbounded). By
modifying the technique of local, integral gradient estimates
developed in \cite{Gidas-Spruck-CPAM-1981}, V\'eron in
\cite{Bidaut-1998-Initial} proved that the problem \eqref{a-j4} has no nontrivial non-negative classical solution for $1<p<p_B(N)$, where
$(x, t)\in\mathbb{R}^N\times\mathbb{R}$ and
\begin{equation}\label{a-j5}
p_B(N)=\begin{cases}
\frac{N(N+2)}{(N-1)^2},\quad&\text{if}\ N\geq2,\\
\infty, &\text{if}\ N=1.
\end{cases}
\end{equation}
When $N=1$, Pol\'a\u{c}ik and Quittner \cite{Quittner-NA-2006} gave a different proof of the latter result.  In the same paper, they also considered the radial solution and prove that the problem \eqref{a-j4} has no nontrivial
non-negative radial bounded classical solution for $1<p<p_S(N)$ by
using the arguments of intersection comparison with (sign-changing)
stationary solutions, see \cite{Souplet-ADV-2003} for an earlier partial result. Here $p_S(N)$ refers to the Sobolev critical exponent
\begin{equation}
\label{a.sobolev}
p_S(N)=
\begin{cases}
\frac{N+2}{N-2},\quad&\text{if}\ N\geq3,\\
\infty, &\text{if}\ N=1,2.
\end{cases}
\end{equation}
As a consequence of such Liouville type results, Souplet et al. \cite {Souplet-IUMJ-2007} obtained singularity and
decay estimates for the problem \eqref{a-j4} with general
nonlinearity and proved \eqref{a-j4} has no nontrivial non-negative radial classical solution for $1<p<p_S(N)$. In the radial case, the Liouville type result is optimal, since it is well-known that, for
$N\geq3$ and $p\geq p_S(N)$, \eqref{a-j4} admits positive stationary
solutions which are radial and bounded. Based on the above result, it is natural to state the following conjecture:
\medskip

\noindent {{\bf Conjecture:}} The problem \eqref{a-j4} has no nontrivial nonnegative classical
solution for $1<p<p_S(N)$.
\medskip

\noindent One of the main difficulties is that the techniques of moving planes or moving spheres can not be adapted to this problem as in the elliptic case \cite{Wolfgang-Zou-2000}. Recently, there are many works
towards proving the above conjecture and studying the same problem for system and other related models. Concerning the above conjecture, Quittner \cite{Quittner-Mathann-2016} first give a affirmative answer in $N=2$ and then completely solve the above problem in a recent paper \cite{Quittne-2020-Preprint} through the delicate analysis and careful study of the associated energy. 

Moving to parabolic system the situation is much more involved and one typically needs to make some assumptions on the nonlinearities. In this direction, the following problem has attracted a lot attentions in past decade,
\begin{equation}
\label{a-9}
U_t-\Delta U=F(U),\quad (x,t)\in\mathbb{R}^{N}\times\mathbb{R},
\end{equation}
where $F$ satisfies the homogeneous condition $F(\lambda
U)=\lambda^kF(U)$ for $\lambda\in(0,\infty)$ and
$U=(u_1,\cdots,u_m)\in[0,\infty)^m\setminus\{0\}(m, k\in\mathbb{N})$. One can see
\cite{Phan-2018-DCDS,Merle-Zaag-2000,Phan-2005-DCDS,Souplet-2016-Math-Ann,Quittner-Mathann-2016,Quittner-DCDS-2011}
and the references therein for the recent developments. In particular, by making full use of the homogeneous condition on the nonlinearity $F$, Quittner \cite{Quittner-Mathann-2016} proved that the system \eqref{a-9} does not possess any nontrivial non-negative solutions when $p>1$ and $(N-2)p<N$ by modifying the arguments of \cite{Bartsch-Quittner-JEMS-2011,Fila-Souplet-2001-Mathann,Giga-1987-IUMJ}. It is worth pointing out that this results is optimal when $N\leq2$. Immediately after \cite{Quittner-Mathann-2016}, Phan and Souplet \cite{Souplet-2016-Math-Ann} considered the system \eqref{a-9} with
\begin{equation}\label{a-10}
\begin{split}
&F(U)=\left(\sum_{j=1}^m\beta_{1j}u_1^ru_j^{r+1},\cdot\cdot\cdot,
\sum_{j=1}^m\beta_{mj}u_1^ru_j^{r+1}\right)\quad\text{and}\\
&\beta_{ij}\geq0\quad \text{for\ all}\quad i\neq
j\quad\text{and}\quad \beta_{ii}>0\ \text{for\ all}\ i.
\end{split}
\end{equation}
They proved that the system \eqref{a-9} has no nontrivial
non-negative classical solution in $\mathbb{R}^N\times\mathbb{R}$ when $N\geq3$ and $1<p=2r+1<p_B(N)$. Later on, Duong and Phan \cite{Phan-2018-DCDS} studied the following coupled system
\begin{equation}\label{a-11}
\begin{cases}
\ds u_t-\Delta
u=a_{11}u^p+a_12u^rv^{s+1},\ &x\in\Omega,\ t\in(0, T),\\
\ds v_t-\Delta v=a_{22}v^p+a_21u^{r+1}v^s,\ &x\in\Omega,\ t\in(0,
T),
\end{cases}
\end{equation}
where $p=r+s+1$. Observe that \eqref{a-11} is homogenous. They proved  similar Liouville
type results of \eqref{a-11} for $r\neq s$. For the Lotka-Volterra parabolic problem, another interesting two component system from the Mathematical Biology is
\begin{equation}\label{a-12}
\begin{cases}
\ds u_t-d_1\Delta
u=u(a_1(x,t)-b_1(x,t)u+c_1v),\ &x\in\Omega,\ t\in(0, T),\\
\ds v_t-d_2\Delta v=v(a_2(x,t)-b_2(x,t)u+c_2v),\ &x\in\Omega,\
t\in(0, T),
\end{cases}
\end{equation}
where $\Omega$ is a (possibly unbounded) domain in $\mathbb{R}^{N}$ with a uniformly $C^2$ smooth boundary $\partial\Omega$, $d_1, d_2$ are positive constants and $a_i, b_i, c_i\in L^\infty(\Omega\times(0,\infty))$ satisfies $b_i, c_i>0(i=1,2)$ and $c_1c_2>b_1b_2$. Quittner \cite{Quittner-JDE-2016} established Liouville theorems,
universal estimates and existence of periodic solutions for \eqref{a-12}.

\medskip

Motivated by the above works, see in particular
\cite{Souplet-2016-Math-Ann,Souplet-IUMJ-2007,Quittner-Mathann-2016,Quittner-JDE-2016}, we will pursue here the same research program for the parabolic systems with non-homogeneous nonlinearities \eqref{auto-1} and \eqref{auto-2}. More precisely, the purpose of the present paper is three-fold: first,
we derive Liouville type results in whole or half space. Second,
we prove singularity and decay estimates. Finally, we obtain the existence of periodic solutions. The case with the coefficients of diagonal terms in \eqref{auto-1}-\eqref{auto-2} being negative is also treated. In this study, there are some new challenging difficulties due to the presence of a non-homogenous structure. In particular, the arguments of
\cite{Phan-2018-DCDS,Souplet-2016-Math-Ann,Quittner-Mathann-2016} can not be adapted to this class of problems directly. To overcome this issue, we use the idea of
\cite{Bidaut-1998-Initial} to obtain first integral estimates for this type of nonlinearities. Then we prove Liouville type results by modifying the scaling and blow up arguments 
in \cite{Souplet-2016-Math-Ann,Souplet-IUMJ-2007}. To the best of our knowledge, this seems to be the first result for parabolic systems with non-homogeneous nonlinearities.

\medskip

Before going on, it is also interesting to mention some results on the related elliptic counterpart of \eqref{auto-1} and \eqref{auto-2}. By using variational methods, the paper \cite{Pomponio-2010-JMP} proved that the elliptic counterpart of \eqref{auto-1} has a positive least energy solution when
$3<p<2^{*}$ and $\beta>0$ sufficiently large. Previously, Shi and the second author of this paper obtain the existence of multiple positive solutions in the case $p=3$, $\lambda_1, \lambda_2$ and $\lambda_3$ pairwisely distinct. While for the case $2<p<3$, the second author of this paper proved the existence of positive solutions by combining the Mountain-Pass theorem in convex sets and
the Nehari constraint methods in  \cite{Wang-CVPDE-2017}. One can see the recent papers \cite{Wang-Shi-2016-Liuville,Wang-Shi-2019-Classification} on the Liouville type results and classification of the nontrivial synchronous solutions for such elliptic system.

\subsection{Two coupled system}
Our first result is about Liouville type theorems, singularity and decay estimates for the two coupled system \eqref{auto-1}.

\begin{theorem}
\label{th1.1}
\begin{itemize}
\item [$(i)$] Suppose that $\mu_1,\mu_2,\beta>0$, $N\leq 5$, $1<p<p_B(N)$, $\Omega=\mathbb{R}^N$ and $t\in\mathbb{R}$. Then the system \eqref{auto-1} does not possess any nontrivial non-negative classical solution.

\item [$(ii)$] Let $\Omega$ be any smooth domain of $\mathbb{R}^{N}$, $N\leq 5$, $2<p<p_B(N)$ and $t\in(0, T)$. For any non-negative nontrivial solution $(u,v)$ of \eqref{auto-1} on $\Omega\times(0,T)$, we have
\begin{equation}
\label{a9}
u(x,t)+v(x,t)\leq C\left(C_1+t^{-\frac{1}{p-1}}+(T-t)^{-\frac{1}{p-1}}+(dist(x,\partial\Omega))^{-\frac{2}{(p-1)}}\right)
\end{equation}
for $(x,t)\in\Omega\times(0, T)$ with some positive constant $C$ and non-negative constant $C_1$. Particularly, $C_1=0$ if $p=2$. Moreover, we have
\begin{equation*}
(T-t)^{-1}=0\ \text{if}~\ T=\infty\quad \text{and}\quad
(dist(x,\partial\Omega))^{-\frac{2}{(p-1)}}=0~\ \text{if}~\ \Omega=\mathbb{R}^{N}.
\end{equation*}
\end{itemize}
\end{theorem}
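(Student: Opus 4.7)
The plan is to address both parts through the scaling and doubling framework of Pol\'a\u{c}ik-Quittner-Souplet, supplemented by a Bidaut-V\'eron-type integral estimate designed to cope with the non-homogeneous coupling. One proves the Liouville theorem of part (i) first, then deduces the universal estimate of part (ii) through a blow-up argument that, thanks to the assumption $p>2$ in (ii), reduces in the limit to two decoupled scalar equations ruled out by classical scalar Liouville theorems.

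For part (i), argue by contradiction: suppose $(u,v)\not\equiv 0$ solves \eqref{auto-1} on $\mathbb{R}^{N}\times\mathbb{R}$. Pick a non-negative parabolic cutoff $\phi\in C_c^\infty(\mathbb{R}^{N}\times\mathbb{R})$ supported in $\{|x|\leq R,\ |t|\leq R^{2}\}$ and a large integer $m$. Multiply the $u$-equation by $u^{q-1}\phi^{2m}$ and the $v$-equation by $v^{q-1}\phi^{2m}$ for a suitable $q>1$, integrate in space-time, and integrate by parts. This generates positive gradient terms, the diagonal contributions $\mu_{1}\int u^{p+q-1}\phi^{2m}$ and $\mu_{2}\int v^{p+q-1}\phi^{2m}$, and the cross terms $\beta\int u^{q}v\,\phi^{2m}+\tfrac{\beta}{2}\int u^{2}v^{q-1}\phi^{2m}$, together with cutoff residuals involving $\phi_{t}$ and $|\nabla\phi|^{2}$. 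A weighted Young's inequality absorbs the cross terms into the diagonal pieces; this is precisely the step where the choice of $q,m$ and the restriction $N\leq 5$ come in. The remaining right-hand side is bounded by a negative power of $R$ whenever $p<p_{B}(N)$, and vanishes as $R\to\infty$, forcing $u\equiv v\equiv 0$.

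For part (ii), also argue by contradiction and scaling. Introduce the scale-invariant quantity
\begin{equation*}
M(x,t)=u^{(p-1)/2}(x,t)+v^{(p-1)/2}(x,t).
\end{equation*}
If \eqref{a9} fails along some sequence of solutions $(u_{k},v_{k})$, there are points $(x_{k},t_{k})$ with $M(x_{k},t_{k})$ exceeding the allowed bound by any large factor. Apply the parabolic doubling lemma to obtain nearby reference points $(\tilde x_{k},\tilde t_{k})$ and scales $\lambda_{k}=1/M(\tilde x_{k},\tilde t_{k})\to 0$ controlling $M$ on a surrounding parabolic cylinder, and rescale
\begin{equation*}
\tilde u_{k}(y,s)=\lambda_{k}^{2/(p-1)}u_{k}(\tilde x_{k}+\lambda_{k}y,\tilde t_{k}+\lambda_{k}^{2}s),
\end{equation*}
with $\tilde v_{k}$ analogously, so that $\tilde u_{k}^{(p-1)/2}(0,0)+\tilde v_{k}^{(p-1)/2}(0,0)=1$. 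A direct computation gives
\begin{equation*}
\partial_{s}\tilde u_{k}-\Delta\tilde u_{k}=\mu_{1}\tilde u_{k}^{p}+\beta\lambda_{k}^{2(p-2)/(p-1)}\tilde u_{k}\tilde v_{k},
\end{equation*}
and similarly for $\tilde v_{k}$ with the $\tfrac{\beta}{2}\tilde u_{k}^{2}$ term carrying the same prefactor. Since $p>2$, this prefactor tends to zero, so parabolic Schauder estimates together with an Arzel\`a-Ascoli argument extract a nontrivial non-negative limit $(U,V)$ on either $\mathbb{R}^{N}\times\mathbb{R}$ (interior case) or a half-space (near-boundary case) solving the \emph{decoupled} equations $U_{s}-\Delta U=\mu_{1}U^{p}$ and $V_{s}-\Delta V=\mu_{2}V^{p}$. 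The scalar Liouville theorems of Bidaut-V\'eron and Quittner for $1<p<p_{B}(N)$ and their half-space analogues then yield $U=V=0$, a contradiction.

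The central difficulty throughout is that the $\chi^{(2)}$ system is genuinely non-homogeneous: the pure powers $\mu_{i}u^{p},\mu_{i}v^{p}$ and the quadratic couplings $\beta uv,\tfrac{\beta}{2}u^{2}$ obey \emph{different} natural scaling laws that agree only at $p=2$. This is precisely why part (ii) requires $p>2$, ensuring the quadratic coupling is subordinate as $\lambda_{k}\to 0$, and why part (i) cannot be reduced directly to existing homogeneous-system Liouville theorems and must instead be handled by a coupled Bidaut-V\'eron integral estimate. The most delicate technical step is the tuning of the exponents $q,m$ in the weighted Young's inequality of part (i) so as to absorb both cross terms and cutoff residuals while preserving the admissible range $1<p<p_{B}(N)$ and $N\leq 5$.
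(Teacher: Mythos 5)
Your plan for part (ii) is essentially the paper's: doubling lemma, parabolic rescaling at scale $\lambda_k=M^{-1}$, observation that for $p>2$ the coupling carries the vanishing prefactor $\lambda_k^{2(p-2)/(p-1)}$ so the limit decouples into two scalar heat equations $U_s-\Delta U=\mu_1 U^p$, $V_s-\Delta V=\mu_2 V^p$, and the scalar Liouville theorem of Pol\'a\u{c}ik--Quittner--Souplet finishes the argument. The one omission is that you do not address the endpoint $p=2$ (which the statement covers through the remark ``$C_1=0$ if $p=2$''): there the prefactor does not vanish and the rescaled limit still solves the full coupled system \eqref{auto-1} with $p=2$, so one must invoke part (i) rather than a scalar Liouville theorem, exactly as the paper does in its Case~1.

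For part (i), however, the argument you sketch is a genuinely different method and, as described, it does not reach the critical range. You propose multiplying the equations by $u^{q-1}\phi^{2m}$, $v^{q-1}\phi^{2m}$ and using weighted Young's inequalities. This is a first-order test-function (Serrin--Zou/Mitidieri--Pohozaev type) argument: it produces quantities like $\int u^{q-2}|\nabla u|^2\phi^{2m}$ and $\int u^{p+q-1}\phi^{2m}$, and by itself such estimates only give Liouville up to exponents of the order of the Serrin exponent $N/(N-2)$, not up to the Gidas--Spruck/Bidaut--V\'eron exponent $p_B(N)=N(N+2)/(N-1)^2$, which for $N\geq 3$ is strictly larger. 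You assert that ``the choice of $q,m$ and the restriction $N\leq 5$'' makes the absorption work, but no choice of $q,m$ in a first-order scheme produces the exponent $p_B(N)$. The paper's mechanism to reach $p_B(N)$ is qualitatively different: it applies the B\"ochner--Weitzenb\"ock inequality to $f=u^{1-d/2}$ (Lemma~\ref{lem-2.1}), which generates second-order quantities $\int \zeta^q u^{-2}|\nabla u|^4$, $\int\zeta^q(\Delta u)^2$ etc. (Lemma~\ref{lem-2.2}), and this allows a choice of $d$ with $\max\{2(N-1)p/(N+2),4(N-1)/(N+2)\}<d<2N/(N-1)$ that works precisely for $p<p_B(N)$. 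From there one obtains an $L^2$ bound on the full nonlinearities $\left(\mu_1 u^p+\beta uv\right)^2+\left(\mu_2 v^p+\tfrac{\beta}{2}u^2\right)^2$ with cutoff-only right-hand side (Lemma~\ref{lem-2.3}), and the last step is again a rescaling argument: one applies the estimate to the scaled functions $(u^R,v^R)=\left(R^{2/(p-1)}u(Rx,R^2t),\,R^{2/(p-1)}v(Rx,R^2t)\right)$ on the unit cylinder, rescales back, and obtains $CR^{N+2-4p/(p-1)}\to 0$ since $p<p_B(N)\leq p_S(N)$. You also describe your estimate as ``Bidaut--V\'eron type'' in the introduction but then do not actually use the Bochner-formula machinery that defines that method, and the crucial outer rescaling step is missing entirely. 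So part (i) has a real gap: the method proposed would not deliver the stated range of $p$.
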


\begin{remark}\label{rek-2.1}
For single heat equation with algebraic nonlinearity, V\'eron in \cite{Bidaut-1998-Initial} proved Liouville type results via the Harnack inequality for the single equation. Due to the strong coupling interaction, it is not easy to apply such techniques for system straightforwardly. Instead, for a class of systems in \cite{Souplet-2016-Math-Ann,Souplet-IUMJ-2007,Souplet-Duke-2007} the authors make use of homogeneity type arguments exploiting the homogeneous structure of the nonlinear terms. Even though our system does not admit homogeneous properties, we are still able to combine the method of V\'eron and scaling techniques to achieve the same goal. Roughly speaking, this is due to the structure of the system \eqref{auto-1} and the cross nonlinear term can be controlled together with the diagonal nonlinear term, see the key result in Lemma \ref{lem-2.3}. While for Theorem \ref{th1.1}-$(ii)$, we can only get the a-priori estimates for $p\geq2$. The reason is that the following homogeneous quadratic system
\begin{equation*}
u_t-\Delta u=\beta uv,\quad v_t-\Delta v=\beta/2u^2,\ x\in\Omega,\
t\in(0, T),
\end{equation*}
has semi-trivial solutions $(0, C_0)$ with $C_0$ being any constant.
\end{remark}

Next we study the following half-space problem
\begin{equation}\label{auto-10}
\begin{cases}
\ds u_t-\Delta
u=\mu_1u^p+\beta uv,\ &x\in\mathbb{R}_+^{N},\ t\in\mathbb{R},\\
\ds v_t-\Delta v=\mu_2v^p+\frac{\beta}{2} u^2,\
&x\in\mathbb{R}_+^{N},\ t\in\mathbb{R},\\
u=v=0, &x\in\partial\mathbb{R}_+^{N},\ t\in\mathbb{R},
\end{cases}
\end{equation}
where $\mathbb{R}_+^{N}=\{x\in\mathbb{R}^{N}: x_1>0\}$. Following the idea of
\cite{Souplet-2016-Math-Ann,Souplet-IUMJ-2007} we can show that any non-negative bounded solution of \eqref{auto-10} is increasing in $x_1$, i.e., $\partial u/\partial x_1>0$ and $\partial v/\partial
x_1>0$. Combined with the scaling technique we have the following result.

\begin{theorem}
\label{th1.2}
Suppose that $\mu_1, \mu_2, \beta>0$, $N\leq 5$ and $1<p<\frac{N(N+2)}{(N-1)^2}$. Then the problem \eqref{auto-10} does not possess any nontrivial non-negative bounded classical solution.
\end{theorem}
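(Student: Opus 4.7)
The plan is to combine a parabolic moving-planes argument in the $x_1$ direction with the whole-space Liouville result Theorem \ref{th1.1}-$(i)$ applied one dimension lower, in the spirit of the strategy used for scalar half-space problems in \cite{Souplet-IUMJ-2007, Souplet-2016-Math-Ann}.

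The first step is to prove the claimed monotonicity: for any nontrivial non-negative bounded classical solution $(u,v)$ of \eqref{auto-10}, one has $\partial_{x_1} u > 0$ and $\partial_{x_1} v > 0$ on $\mathbb{R}_+^N \times \mathbb{R}$. Because $\beta>0$ and $u,v\ge 0$, the nonlinearities $\mu_1 u^p + \beta uv$ and $\mu_2 v^p + \tfrac{\beta}{2} u^2$ are jointly nondecreasing in $(u,v)$, so reflecting across a hyperplane $\{x_1 = \lambda\}$ and subtracting yields a cooperative linear parabolic system for the differences $w^\lambda := u(x^\lambda,t)-u(x,t)$ and $z^\lambda := v(x^\lambda,t)-v(x,t)$ with bounded coefficients. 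A moving-planes/sliding argument, adapted as in \cite{Foldes-Polacik-2009, Souplet-IUMJ-2007}, using the zero Dirichlet data on $\partial \mathbb{R}_+^N$, the boundedness of $(u,v)$, and uniform interior parabolic regularity to both initialize the reflections and propagate strict inequalities via the parabolic strong maximum principle for cooperative systems, produces the required strict monotonicity.

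Once monotonicity is in hand, the bounded nondecreasing profiles have pointwise limits
\[
u_\infty(x',t) := \lim_{x_1 \to \infty} u(x_1,x',t), \qquad v_\infty(x',t) := \lim_{x_1 \to \infty} v(x_1,x',t),
\]
defined on $\mathbb{R}^{N-1} \times \mathbb{R}$. Applying interior parabolic Schauder and $L^q$ estimates to the translated sequence $\bigl(u(\cdot + n e_1, \cdot),\, v(\cdot + n e_1, \cdot)\bigr)$ upgrades this convergence to $C^{2,1}_{\mathrm{loc}}$, so $(u_\infty, v_\infty)$ is a bounded non-negative classical solution of the system \eqref{auto-1} on $\mathbb{R}^{N-1} \times \mathbb{R}$. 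Since $p_B(N) = N(N+2)/(N-1)^2$ is strictly decreasing in $N$ on $(1,\infty)$, from $1<p<p_B(N)$ we get $p < p_B(N-1)$ whenever $N \geq 3$, while for $N=2$ one has $p_B(N-1)=+\infty$; in both cases Theorem \ref{th1.1}-$(i)$ in dimension $N-1$ forces $(u_\infty, v_\infty) \equiv (0,0)$. The case $N=1$ reduces to an ODE system on $\mathbb{R}$, where a direct argument using $u_\infty'(t)\ge \mu_1 u_\infty^p \ge 0$ together with boundedness yields $u_\infty\equiv 0\equiv v_\infty$ as well. Combined with the Dirichlet condition $u(0,\cdot)=v(0,\cdot)=0$ and the monotonicity in $x_1$, this gives $0 \le u(x_1,x',t)\le u_\infty(x',t) = 0$ and analogously $v\equiv 0$, contradicting the nontriviality of $(u,v)$.

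The hard part will be the moving-planes step itself. On the unbounded space-time slab $\mathbb{R}_+^N\times\mathbb{R}$ there is no natural starting hyperplane, no initial condition, and no decay at spatial infinity; one must carefully combine the vanishing Dirichlet data at $\{x_1=0\}$, the global boundedness and uniform parabolic regularity of $(u,v)$, and the cooperative structure of the nonlinearity in order to initialize the reflections and propagate strict inequalities across $\lambda=0^+$. The non-homogeneous structure of the nonlinearity, which complicates the scaling/blow-up portion of Theorem \ref{th1.1}, does not obstruct this step, since it is precisely the joint monotonicity in $(u,v)$ of all four nonlinear terms that makes the cooperative parabolic maximum principle available.
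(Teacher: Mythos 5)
Your plan is the same route the paper takes: establish monotonicity of $(u,v)$ in $x_1$ via a cooperative moving-planes argument, pass to a shift-to-infinity limit to produce a bounded positive solution of \eqref{auto-1} on $\mathbb{R}^{N-1}\times\mathbb{R}$ (constant in $x_1$), and invoke Theorem \ref{th1.1}-$(i)$ one dimension lower; your exponent bookkeeping $p<p_B(N)<p_B(N-1)$ and the $N=1$ ODE remark match the paper's. The only real difference is that the paper does not re-run moving planes from scratch but verifies hypotheses $(L_1)$--$(L_3)$ of a ready-made cooperative monotonicity lemma (\cite[Theorem 2.3]{Souplet-2016-Math-Ann}); the nontrivial part of that verification, namely $(L_3)$, that every nontrivial nonnegative bounded solution is strictly positive (in particular ruling out the semi-trivial scenario $u\equiv0$, $v\not\equiv0$ via the strong maximum principle and a scalar half-space Liouville theorem), is the one ingredient your sketch leaves implicit.
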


We also study the $T$-periodic solutions of the system
\begin{equation}\label{auto-11}
\begin{cases}
\ds u_t-\Delta
u=\mu_1u^p+\beta uv,\ &x\in\Omega,\ t\in(0, \infty),\\
\ds v_t-\Delta v=\mu_2v^p+\frac{\beta}{2} u^2,\ &x\in\Omega,\
t\in(0, \infty),\\
u(x,t)=v(x,t)=0,\quad &x\in\partial\Omega,\ t\in\times(0, \infty),\\
u(x,0)=u(x,T)\ \text{and}\ v(x,0)=v(x,T), &x\in\Omega,
\end{cases}
\end{equation}
where $\Omega\subset\mathbb{R}^N(N\leq5)$ is a smooth bounded domain. Then we have the following conclusion.

\begin{theorem}
\label{th1.3}
Suppose that $\mu_1, \beta>0$, $N\leq 5$ and $2\leq p<p_B(N)$. Then the problem \eqref{auto-11} has a positive $T$-periodic solution provided $\mu_2\geq0$ is sufficiently small.
\end{theorem}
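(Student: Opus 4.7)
The plan is to realise a positive $T$-periodic solution of \eqref{auto-11} as a nontrivial fixed point of the Poincar\'e map in a positive cone, using Leray--Schauder / cone fixed-point index theory. The two ingredients are uniform a priori bounds coming from Theorem~\ref{th1.1}$(ii)$ and a forcing homotopy that trivialises the index on a large ball. Concretely, set $X=C_0(\overline{\Omega})\times C_0(\overline{\Omega})$ with its natural positive cone $P$ and define $\Phi:P\to P$ by $\Phi(u_0,v_0)=(u(\cdot,T),v(\cdot,T))$, where $(u,v)$ is the classical solution of the initial--boundary value problem attached to \eqref{auto-11} (no periodicity constraint) starting from $(u_0,v_0)$. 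Parabolic smoothing makes $\Phi$ compact on the open subset of $P$ on which the solution exists on $[0,T]$ and remains bounded, and its fixed points there are exactly the positive $T$-periodic initial data of \eqref{auto-11}.

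For the a priori bound, any positive $T$-periodic solution, extended to $t\in\mathbb{R}$ and inserted into Theorem~\ref{th1.1}$(ii)$ on $\Omega\times(t_0,t_0+3T)$ with $t_0\to-\infty$, yields
\begin{equation*}
u(x,t)+v(x,t)\leq C\bigl(C_1+\mathrm{dist}(x,\partial\Omega)^{-2/(p-1)}\bigr)
\end{equation*}
uniformly in $t$. Combined with the homogeneous Dirichlet condition and the subcriticality $p<p_B(N)$ (which, together with $N\leq 5$, puts the right-hand sides of \eqref{auto-11} into $L^{q}_{\mathrm{loc}}$ for some $q>(N+2)/2$), this interior estimate is upgraded, by a standard parabolic $L^q$--Schauder and bootstrap argument up to $\partial\Omega$, to a uniform bound $\|u\|_{\infty}+\|v\|_{\infty}\leq M$, with $M$ independent of the solution and of $\mu_2\in[0,\mu_2^\star]$ for any fixed $\mu_2^\star$.

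The index computation goes as follows. Linearising $\Phi$ at $(0,0)$ gives the Dirichlet heat semigroup at time $T$ on each component (all nonlinear terms vanish to first order at the origin since $p>1$), with spectral radius $e^{-T\lambda_1(-\Delta)}<1$; hence $i_P(\Phi,B_\varepsilon(0)\cap P)=1$ for $\varepsilon$ small. Fix $R>M$ and add $s\phi_1(x)$, with $\phi_1>0$ the principal Dirichlet eigenfunction of $-\Delta$, to the right-hand side of each equation, $s\in[0,s_\star]$. A blow-up/rescaling argument shows that the a priori bound persists uniformly in $s\in[0,s_\star]$ (the forcing rescales away under the parabolic scaling used in the proof of Theorem~\ref{th1.1}), while testing the forced system against $\phi_1$ and using periodicity yields $\|u\|_{\infty}\geq c\,s$ for some constant $c>0$, so that for $s_\star$ large no $T$-periodic solution of the forced system lies in $B_R\cap P$. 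Thus $i_P(\Phi_{s_\star},B_R\cap P)=0$, and homotopy invariance and additivity of the index produce a nontrivial fixed point in $(B_R\setminus\overline{B_\varepsilon})\cap P$. The smallness of $\mu_2\geq 0$ is used to push the semi-trivial fixed points of the form $(0,v_0)\neq(0,0)$ --- which satisfy $v_t-\Delta v=\mu_2 v^p$ and therefore have norm of order $\mu_2^{-1/(p-1)}\to\infty$ as $\mu_2\to 0$ --- outside $B_R$, ensuring that the fixed point produced has both components strictly positive.

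The main obstacle is the passage from the interior, possibly singular, estimate of Theorem~\ref{th1.1}$(ii)$ to an honest boundary bound: since $-2/(p-1)\to -2$ as $p\to 2^+$, the factor $\mathrm{dist}(x,\partial\Omega)^{-2/(p-1)}$ is only marginally integrable near $\partial\Omega$, and removing it requires delicate use of both the subcriticality $p<p_B(N)$ and the Dirichlet condition; a secondary subtlety is performing the index computation in the large ball without a variational structure, which is what forces the direct testing against $\phi_1$ combined with the superlinear growth $p\geq 2$ and the positivity of $\mu_1,\beta$.
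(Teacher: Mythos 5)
Your overall architecture—fixed point index / Leray--Schauder degree, index $1$ on a small ball, index $0$ on a large ball, with Liouville theorems supplying the a priori bound—is the same as the paper's, with two cosmetic differences (you work with the Poincar\'e map on initial data, while the paper uses the ``period map'' $\mathcal{T}$ of Lemma~\ref{lem3.1} acting on functions over the whole cylinder $\Omega\times(0,T)$, as in Quittner's Lotka--Volterra paper; and you force with $s\phi_1$, while the paper homotopes to the decoupled model $u_t-\Delta u=\lambda u+u^2$ with $\lambda>\lambda_1^T$ and tests against the periodic principal eigenfunction $\psi$). Your argument for ruling out semi-trivial solutions for small $\mu_2$ is also essentially a rephrasing of the paper's (there are no positive $T$-periodic solutions of $z_t-\Delta z=\mu_2 z^p$ below the a priori bound once $\mu_2\|z\|_\infty^{p-1}<\lambda_1$, because $t\mapsto\int_\Omega z\varphi_1\,dx$ would be strictly decreasing).

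The genuine gap is in the a priori bound. You propose to start from the interior singularity estimate \eqref{a9} of Theorem~\ref{th1.1}$(ii)$,
\[
u(x,t)+v(x,t)\leq C\bigl(C_1+\mathrm{dist}(x,\partial\Omega)^{-\frac{2}{p-1}}\bigr),
\]
and then ``upgrade'' it to a uniform $L^\infty$ bound up to $\partial\Omega$ by a standard parabolic $L^q$--Schauder bootstrap, claiming that the right-hand sides lie in $L^q_{\mathrm{loc}}$ for some $q>(N+2)/2$. This does not work: the estimate only gives $u\lesssim d^{-2/(p-1)}$, hence $u^p\lesssim d^{-2p/(p-1)}$ near the boundary, and for this to be in $L^q$ in the normal direction one needs $\frac{2pq}{p-1}<1$, which for $p\geq 2$ forces $q<\frac{p-1}{2p}\leq\frac{1}{4}$, far below $(N+2)/2$. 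So the interior estimate gives essentially no integrability of the nonlinearity near $\partial\Omega$, and the bootstrap does not even start. The paper avoids this entirely: Lemma~\ref{lem3.3} proves the uniform bound directly by the Pol\'a\u{c}ik--Quittner--Souplet doubling/rescaling argument, where the rescaled solutions converge (up to subsequence) either to a nontrivial bounded solution on $\mathbb{R}^N\times\mathbb{R}$ or, when the rescaling centers drift to $\partial\Omega$, to one on a half-space, and both are excluded by the corresponding Liouville theorems (whole space by Lemma~\ref{lem3.2}, half-space by Theorem~\ref{th1.2}). Without replacing your bootstrap step by such a boundary blow-up argument, the large-ball degree computation is unsupported.
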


\begin{remark}
Our method of proving Theorem \ref{th1.2} is based on the degree theory method. We can only apply it to prove the existence of $T$-periodic positive solution of \eqref{auto-10} for $p\geq2$. While for the case $1<p<2$, due to the same reason as in Remark \ref{rek-2.1}, we can not establish the boundedness of $T$-periodic solutions and this prevents us from using the degree theory method.
\end{remark}

The last result concerning \eqref{auto-1} is about Liouville type theorems and existence of periodic solution for the case when the coefficients $\mu_1, \mu_2$
are negative.

\begin{theorem}
\label{th1.4}
Let $\mu_1, \mu_2<0$ and $\beta>0$. Then the following
results hold.
\begin{itemize}
\item [(1)] If $p=2,~N\leq5$ and $3|\mu_1|^2<2\beta(\beta+|\mu_2|)$, then the problem \eqref{auto-1} has no non-negative solution when $(x,t)\in\mathbb{R}^{N}\times\mathbb{R}$. In addition, for the case $\Omega=\mathbb{R}_+^{N}$ and $t\in\mathbb{R}$, let $(u,v)$ be a non-negative bounded solution of \eqref{auto-1} equiped with the homogeneous Dirichlet boundary condition. Then $u=v=0$.

\item [(2)] If $p=2,~N\leq5$ and $3|\mu_1|^2<2\beta(\beta+|\mu_2|)$, then the problem \eqref{auto-11} has a positive $T$-periodic solution.
\end{itemize}
\end{theorem}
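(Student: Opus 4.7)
The plan is to prove part (1) first and then deduce part (2) via a blow-up-plus-degree argument.

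For the whole-space Liouville statement in part (1), I would test the two equations against a nonnegative spacetime cutoff $\phi$. Integration by parts produces the integrated identities
\begin{align*}
|\mu_1|\int u^2\phi - \beta\int uv\,\phi &= \int u(\phi_t+\Delta\phi),\\
|\mu_2|\int v^2\phi - \tfrac{\beta}{2}\int u^2\phi &= \int v(\phi_t+\Delta\phi).
\end{align*}
Combining these with a suitably chosen weight and applying Cauchy--Schwarz together with Young's inequality to the cross term $\int uv\phi$, the structural assumption $3|\mu_1|^2<2\beta(\beta+|\mu_2|)$ is precisely the algebraic balance needed to render the resulting quadratic form in $\int u^2\phi$ and $\int v^2\phi$ coercive, leading to
\[
\int (u^2+v^2)\phi \le C\,\mathcal{E}(\phi),
\]
where $\mathcal{E}(\phi)$ is a standard cutoff-error term dominated by $\int(u+v)|\phi_t+\Delta\phi|$. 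Plugging in the family of parabolic cutoffs $\phi_R(x,t)=\chi(|x|/R)^m\chi(|t|/R^2)^m$ with $m$ large enough to control $(\phi_{R,t}+\Delta\phi_R)^2/\phi_R$, one has $\mathcal{E}(\phi_R)\to 0$ as $R\to\infty$, forcing $u\equiv v\equiv 0$.

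For the half-space case in part (1), I would combine a sliding/moving-plane argument (analogous to the one outlined for Theorem~\ref{th1.2}) with the whole-space Liouville just established. Parabolic maximum principles show that any bounded nonnegative solution of \eqref{auto-10} is strictly increasing in $x_1$, so the limits $U(x',t)=\lim_{x_1\to\infty}u$ and $V(x',t)=\lim_{x_1\to\infty}v$ exist and, by passage to the limit in the PDE, solve \eqref{auto-1} on $\mathbb{R}^N\times\mathbb{R}$. The whole-space result then forces $U\equiv V\equiv 0$, and monotonicity in $x_1$ yields $u\equiv v\equiv 0$.

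For part (2), the plan is a Leray--Schauder degree computation on the Poincar\'e time-$T$ map $P:(u_0,v_0)\mapsto(u(\cdot,T),v(\cdot,T))$ associated to \eqref{auto-11}. The crucial input is a universal $L^\infty$ a priori bound on all nonnegative $T$-periodic solutions, obtained by a blow-up contradiction: a sequence $(u_n,v_n)$ of periodic solutions with $M_n:=\|(u_n,v_n)\|_\infty\to\infty$, rescaled by the $p=2$ parabolic scaling $\tilde u_n(y,s)=M_n^{-1}u_n(x_n+M_n^{-1/2}y,\,t_n+M_n^{-1}s)$ centered at near-maximum points, converges after parabolic regularity and a diagonal extraction to a nontrivial bounded nonnegative entire solution on $\mathbb{R}^N\times\mathbb{R}$ or $\mathbb{R}^N_+\times\mathbb{R}$, contradicting part (1). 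With the a priori bound in hand, computing the degree of $I-P$ on a small ball (via linearization at the trivial solution, giving a nontrivial degree tied to the principal Dirichlet eigenvalue) and on a sufficiently large ball (degree zero by homotopy to a problem with no periodic solution) produces a nontrivial positive fixed point of $P$, i.e., a positive $T$-periodic solution of \eqref{auto-11}.

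The principal obstacle is pinpointing the sharp algebraic condition $3|\mu_1|^2<2\beta(\beta+|\mu_2|)$: because the coupling terms $\beta uv$ and $\tfrac{\beta}{2}u^2$ are not symmetric in $u$ and $v$, naive symmetric combinations of the two equations are ineffective, and the combination weights together with the Young parameter must be tuned precisely in order to extract this exact inequality rather than a weaker condition such as $2|\mu_1|^2|\mu_2|>\beta^3$. A secondary difficulty in part (2) is that the blow-up centers $x_n$ may approach $\partial\Omega$, which is exactly why the half-space version of the Liouville result in part (1) is indispensable.
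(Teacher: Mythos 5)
Your proposal for the whole-space Liouville statement in part (1) has a genuine gap that cannot be repaired along the lines you describe. The test-function / Kaplan-type energy estimate you propose produces, after absorbing the linear term via Young's inequality, a bound of the form
\[
\int_{\mathbb{R}^N\times\mathbb{R}}(u^2+v^2)\,\phi_R
\;\le\; C\int \frac{|\partial_t\phi_R+\Delta\phi_R|^2}{\phi_R}.
\]
With the parabolic cutoff family $\phi_R$ you specify, $|\partial_t\phi_R|,|\Delta\phi_R|\sim R^{-2}$ on a spacetime annulus of measure $\sim R^{N+2}$, so the right-hand side scales like $R^{N-2}$. This tends to zero only when $N=1$; for $N\geq 3$ it diverges, and for $N=2$ it is merely bounded. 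In other words, the simple cutoff argument works precisely in the Fujita-subcritical range $p\le 1+2/N$, which for $p=2$ excludes $N\geq 2$; it has no chance of reaching the stated range $N\le 5$. This is exactly why the paper does not use a pure energy argument. Its key step, entirely absent from your proposal, is to prove the algebraic inequality $(K_+g-f)(u-K_+v)\ge C(u+K_+v)(u-K_+v)^2$ for a specific $K_+>0$ (the unique positive root of $h_K(K)=0$), which yields the differential inequality $(w_t-\Delta w)\,\mathrm{sign}(w)\le -C|w|^2$ for $w=u-K_+v$. A parabolic Liouville-type comparison result (Quittner's Proposition 4) then forces $w\equiv 0$, i.e.\ $u=K_+v$, reducing the system to the scalar equation $u_t-\Delta u = \bigl(\sqrt{|\mu_1|^2+2\beta(\beta+|\mu_2|)}-2|\mu_1|\bigr)u^2$; only here does the hypothesis $3|\mu_1|^2<2\beta(\beta+|\mu_2|)$ enter, as the precise condition guaranteeing the coefficient is strictly positive, after which the scalar Liouville theorem (valid up to $p<p_B(N)$, not merely the Fujita range) applies. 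You suspected the condition should arise from tuning Young parameters in your quadratic-form estimate, but no choice of weights $a,b$ and Young parameter $\lambda$ will yield a coercive quadratic form with a cutoff error vanishing for $N\geq 3$.

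Your half-space argument (sliding to infinity and invoking the whole-space result) is a legitimate alternative route but relies on the broken whole-space step; the paper instead observes that the same $u=K_+v$ reduction and scalar Liouville theorem apply directly in the half space for bounded solutions. Your outline for part (2) (blow-up to obtain uniform $T$-periodic bounds, then Leray--Schauder degree at small and large radii) matches the paper's strategy in substance and is fine once part (1) is repaired.
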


\begin{remark}
In the proof of Theorem \ref{th1.4}, we first show that there exists
$M>0$ such that any non-negative(or non-negative bounded) solution $u,v$ verifies that $u=Mv$. This implies that $u$ is a solution of the scalar
equation
\begin{equation}
\label{a-16}
u_t-\Delta
u=\left(\sqrt{|\mu_1|^2+2\beta(\beta+|\mu_2|)}-2|\mu_1|\right)u^2,\quad
(x,t)\in\Omega\times \mathbb{R}.
\end{equation}
If
$\sqrt{|\mu_1|^2+2\beta(\beta+|\mu_2|)}-2|\mu_1|>0$, then we can see that \eqref{a-16} has no non-negative solutions. While for the case
$p\neq2$, we derive that $u$ satisfies
\begin{equation}\label{a-17}
u_t-\Delta u=\mu_1u^p+\beta u^2,\quad (x,t)\in\Omega\times
\mathbb{R},
\end{equation}
where $\mu_1<0$ and $p\neq2$. Up to now we do not know any
Liouville type results of \eqref{a-17}, due to the non-homogeneous nonlinearity and the coefficients of the diagonal nonlinear term being negative. This is an interesting issue that we shall pursue in the future.
\end{remark}

\subsection{Three coupled system}
In this part we focus on the three coupled system \eqref{auto-2}. We first present the following Liouville type result together with singularity and decay estimates.
\begin{theorem}
\label{th1.6}
\begin{itemize}
\item [$(i)$] Suppose that $\mu_1,\mu_2,\beta>0$, $N\leq 5$, $1<p<p_B(N)$, $\Omega=\mathbb{R}^N$ and $t\in\mathbb{R}$. Then the system \eqref{auto-2} has no nontrivial non-negative classical solution.

\item [$(ii)$] Let $\Omega$ be any smooth domain of $\mathbb{R}^{N}$, $N\leq 5$, $2\leq p<p_B(N)$ and $t\in(0,T)$. For any non-negative nontrivial solution $(u,v,w)$ of \eqref{auto-2} on $\Omega\times(0,T)$, then we have
\begin{equation}
\label{aj9}
u(x,t)+v(x,t)+w(x,t)\leq C\left(C_1+t^{-\frac{1}{p-1}}+(T-t)^{-\frac{1}{p-1}}+(dist(x,\partial\Omega))^{-\frac{2}{(p-1)}}\right)
\end{equation}
for $(x,t)\in\Omega\times(0, T)$ with some positive constant $C$ and non-negative constant $C_1$. Particularly, $C_1=0$ if $p=2$. Moreover, we have
\begin{equation*}
(T-t)^{-1}=0\ \text{if}~\ T=\infty\quad \text{and}\quad
(dist(x,\partial\Omega))^{-\frac{2}{(p-1)}}=0~\ \text{if}~\ \Omega=\mathbb{R}^{N}.
\end{equation*}
\end{itemize}
\end{theorem}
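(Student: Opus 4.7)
The plan is to parallel the strategy used for Theorem \ref{th1.1} in the two-coupled case, adapting the key lemmas to accommodate the three cyclic cross-couplings $\beta vw$, $\beta uw$, $\beta uv$ in \eqref{auto-2}. For part $(i)$, I would first establish an integral estimate in the spirit of V\'eron \cite{Bidaut-1998-Initial} (the three-component analogue of Lemma 2.3 in the two-coupled case). For a non-negative classical solution $(u,v,w)$ of \eqref{auto-2} on a parabolic cylinder $Q_R$, multiply the three equations by $u^{q-1}\varphi$, $v^{q-1}\varphi$, $w^{q-1}\varphi$ respectively, where $\varphi$ is a smooth space-time cutoff of scale $R$ and $q>0$ is chosen later; integrate over $Q_R$ and sum. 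The diagonal contributions yield $\sum_i\mu_i\iint\varphi\,u_i^{p+q-1}$, while each cross-coupling integral such as $\iint\varphi\,u^{q-1}vw$ is controlled by a three-variable Young inequality of the form
\[
u^{q-1}vw \le \delta\bigl(u^{p+q-1}+v^{p+q-1}+w^{p+q-1}\bigr)+C_\delta,
\]
valid in the appropriate range of $p$ and $q$. After absorbing the small-$\delta$ piece into the diagonal contribution, one arrives at an estimate of the form
\[
\iint_{Q_R}\bigl(u^{p+q-1}+v^{p+q-1}+w^{p+q-1}\bigr)\le C R^{N+2-\frac{2(p+q-1)}{p-1}}.
\]
Choosing $q$ so that the exponent of $R$ is strictly negative---possible precisely when $1<p<p_B(N)$---and letting $R\to\infty$ forces $u\equiv v\equiv w\equiv 0$.

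For part $(ii)$, I would follow the doubling-and-blow-up scheme of Pol\'a\u{c}ik--Quittner--Souplet from \cite{Souplet-IUMJ-2007}. Assuming \eqref{aj9} fails, the doubling lemma applied to $M:=u^{(p-1)/2}+v^{(p-1)/2}+w^{(p-1)/2}$ produces a sequence of solutions together with blow-up points $(x_k,t_k)$ and scales $\lambda_k\to 0$. The rescaling
\[
(\tilde u_k,\tilde v_k,\tilde w_k)(y,s)=\lambda_k^{2/(p-1)}(u_k,v_k,w_k)(x_k+\lambda_k y,t_k+\lambda_k^2 s)
\]
keeps the diagonal nonlinearities $\mu_i u_i^p$ in normal form, while each cross-coupling term acquires a prefactor $\lambda_k^{2(p-2)/(p-1)}$. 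Under the hypothesis $p\ge 2$ this prefactor stays bounded and vanishes when $p>2$, so by standard interior parabolic regularity the rescaled sequence converges to a non-negative classical solution either of \eqref{auto-2} on $\mathbb{R}^N\times\mathbb{R}$ (when $p=2$), or of the decoupled scalar equations $U_t-\Delta U=\mu_i U^p$ (when $p>2$), or of the corresponding half-space problem with homogeneous Dirichlet boundary. Each such limit is ruled out by part $(i)$, by V\'eron's scalar Liouville result, or by a three-component half-space analogue of Theorem \ref{th1.2} (obtained via monotonicity in the normal direction $x_1$ together with the whole-space result), yielding a contradiction.

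The main obstacle is the integral estimate of the first step: balancing three simultaneous Young inequalities so that the three cyclic cross terms can be reabsorbed into the three diagonal terms requires a delicate calibration of the Young exponents, because the non-homogeneity of \eqref{auto-2} prevents the clean scaling-invariant manipulations available in the homogeneous setting of \cite{Souplet-2016-Math-Ann,Phan-2018-DCDS}. The positivity $\mu_i,\beta>0$ and the cyclic symmetry of the cross terms are what ultimately make this calibration possible. A secondary point is the restriction $p\ge 2$ in part $(ii)$: the prefactor $\lambda_k^{2(p-2)/(p-1)}$ blows up when $1<p<2$, reflecting the semi-trivial obstruction recalled in Remark \ref{rek-2.1}, which is precisely why the doubling scheme cannot close below the quadratic threshold.
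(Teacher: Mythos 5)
Your plan for part (ii) matches the paper's: the doubling lemma applied to a power of $u+v+w$, rescaling by $\lambda_k=M_k^{-1}$, the prefactor $\lambda_k^{2(p-2)/(p-1)}$ on the cross terms, and the case split $p=2$ vs.\ $p>2$ leading to a limit solution of \eqref{auto-2} or of the decoupled system (handled by part $(i)$ and by \cite[Theorem A]{Souplet-IUMJ-2007}). One minor superfluity: you would not need a half-space analogue of Theorem \ref{th1.2} here, because the doubling lemma produces interior points $(x_k,t_k)$ whose parabolic distance to $\partial D_k$ is at least $2k\lambda_k$, so the rescaled domains fill out $\mathbb{R}^N\times\mathbb{R}$ and no boundary survives in the limit. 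The half-space Liouville theorem is a separate result in the paper, not an ingredient of the blow-up argument.

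For part $(i)$, however, there is a genuine gap. You describe multiplying the three equations by $u^{q-1}\varphi$, $v^{q-1}\varphi$, $w^{q-1}\varphi$ and summing, then calibrating a three-variable Young inequality. This is a Mitidieri--Pohozaev style test-function argument, and it is \emph{not} the method behind Lemma \ref{lem-2.3} or its three-component analogue Lemma \ref{lem-4.2}, which you cite. V\'eron's approach and the paper's rests on the B\"ochner--Weitzenb\"ock inequality (Lemma \ref{lem-2.1}), applied with the substitution $f=u_i^{(m+2-d)/2}$, to produce interior fourth-order gradient estimates (Lemmas \ref{lem-2.2}, \ref{lem-4.1}) and, from them, an $L^2$ bound on the nonlinearities (Lemmas \ref{lem-2.3}, \ref{lem-4.2}). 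The admissible choice of $d$ in \eqref{d-12} is precisely what forces $p<p_B(N)$; a simple multiplier argument will not reach this range. Moreover, your Young step $u^{q-1}vw\le\delta(u^{p+q-1}+v^{p+q-1}+w^{p+q-1})+C_\delta$ is only homogeneous when $p=2$; for $p>2$ the exponents sum to less than $1$, so a dummy factor produces the additive constant $C_\delta$, and integrating it over $Q_R$ yields $C_\delta R^{N+2}$, which destroys the scaling argument rather than being ``absorbed into cutoff terms.'' The paper avoids this by never confronting the non-homogeneity at the level of a raw multiplier: it estimates the full squared nonlinearity $(\mu_i u_i^p+\beta u_ju_k)^2$ and handles the non-homogeneity by rewriting the problem for the rescaled unknowns $(u^R,v^R,w^R)$ as \eqref{d-24} with coefficient $\tilde\beta_R=\beta R^{2(p-2)/(p-1)}$, then checking that the constant $C(\varepsilon)$ in \eqref{d-25}--\eqref{d-26} is independent of $R$. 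That uniformity is the key mechanism, and it is missing from your proposal.
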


As Theorem \ref{th1.1}, we can only get the a-priori estimates of \eqref{auto-2} for $p\geq2$ in Theorem \ref{th1.6}-$(ii)$, the reason is that the homogeneous quadratic system
\begin{equation*}
u_t-\Delta u=\beta wv,\ v_t-\Delta v=\beta uw, \ w_t-\Delta w=\beta
uv,\ x\in\Omega,\ t\in(0, T)
\end{equation*}
has three semitrivial solutions $(0, 0, C_0), (0, C_0, 0)$ and $(C_0, 0)$, where $C_0\in\mathbb{R}$ is constant.

Finally, we study the existence of periodic solutions to the three coupled system
\begin{equation}\label{auto-13}
\begin{cases}
\ds u_t-\Delta
u=\mu_1u^p+\beta vw,\ &x\in\Omega,\ t\in(0, \infty),\\
\ds v_t-\Delta v=\mu_2v^p+\beta uw,\ &x\in\Omega,\
t\in(0, \infty),\\
\ds w_t-\Delta w=\mu_3w^p+\beta uv,\ &x\in\Omega,\ t\in(0, \infty),\\
u(x,t)=v(x,t)=w(x,t)=0,\quad &x\in \partial\Omega,\ t\times(0,
\infty),
\end{cases}
\end{equation}
where $u(x,0)=u(x,T)$, $v(x,0)=v(x,T)$, $\text{and}$ $w(x,0)=w(x,T)$
for $x\in\Omega$, $\Omega\subset\mathbb{R}^N(N\leq5)$ is any smooth bounded domain. We have the following conclusion.

\begin{theorem}
\label{th1.7}
Suppose that $\beta>0$ and $2\leq p<p_B(N)(N\leq5)$. Then the problem \eqref{auto-13} possesses a positive $T$-periodic solution when $\mu_1, \mu_2, \mu_3>0$ are sufficiently small.
\end{theorem}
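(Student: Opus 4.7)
The plan is to reduce the problem to a fixed-point equation and carry out a topological-degree (fixed-point-index) argument in the positive cone, adapting the scheme used for the two-component statement in Theorem \ref{th1.3} and inspired by Quittner's treatment of the Lotka--Volterra system \cite{Quittner-JDE-2016}. Let $X$ denote a Banach space of continuous $T$-periodic triples $U=(u,v,w)$ on $\bar{\Omega}\times\mathbb{R}$ vanishing on $\partial\Omega$, and let $K\subset X$ be the cone of componentwise non-negative functions. Problem \eqref{auto-13} is then equivalent to the fixed-point equation $U=\mathcal{K}F(U)$ with
\begin{equation*}
F(U)=\bigl(\mu_1 u^p+\beta vw,\ \mu_2 v^p+\beta uw,\ \mu_3 w^p+\beta uv\bigr),
\end{equation*}
where $\mathcal{K}$ is the (compact) inverse of the $T$-periodic Dirichlet heat operator acting componentwise. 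Because $p<p_B(N)<p_S(N)$, parabolic Schauder/$L^q$ regularity makes $\mathcal{K}F\colon K\to K$ completely continuous.

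The analytic heart of the argument is a uniform $L^\infty$ a priori bound for all positive $T$-periodic solutions. Any such solution, extended periodically to $t\in\mathbb{R}$, fits the hypotheses of Theorem \ref{th1.6}(ii) on every time window; letting the endpoints of that window go to $\pm\infty$ collapses the singular factors $t^{-1/(p-1)}$ and $(T-t)^{-1/(p-1)}$ and gives the interior bound $u+v+w\le C\bigl(1+\mathrm{dist}(x,\partial\Omega)^{-2/(p-1)}\bigr)$. To upgrade this to a bound valid up to the boundary I would run the doubling/rescaling procedure of \cite{Souplet-IUMJ-2007}: if no uniform bound existed, pick near-maxima $(x_n,t_n)$ with $M_n\to\infty$ and set $\tilde U_n(y,s)=M_n^{-1}U_n\bigl(x_n+M_n^{-(p-1)/2}y,\,t_n+M_n^{-(p-1)}s\bigr)$. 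Under this parabolic scaling the diagonal terms $\mu_i u^p$ are preserved, while the cross couplings pick up a factor $M_n^{-(p-2)}$. Hence the blow-up limit solves, for $p>2$, the decoupled scalar system $\partial_sU_i-\Delta U_i=\mu_i U_i^p$ on $\mathbb{R}^N\times\mathbb{R}$ or on a half-space (both ruled out by \cite{Bidaut-1998-Initial}), and for $p=2$ the full homogeneous quadratic version of \eqref{auto-2} (ruled out on $\mathbb{R}^N\times\mathbb{R}$ by Theorem \ref{th1.6}(i)). The $p=2$ half-space case requires the three-component analog of Theorem \ref{th1.2}, which I would establish by the same $x_1$-monotonicity step followed by the scaling argument, after transposing the cubic integral estimate behind Lemma \ref{lem-2.3} to the symmetric cross structure $\beta vw,\,\beta uw,\,\beta uv$.

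With the uniform bound in hand, the index computation is standard. Since $p\ge 2$, every term in $F$ is at least quadratic near the origin, so $\|\mathcal{K}F(U)\|=o(\|U\|)$ as $\|U\|\to 0$, yielding $\mathrm{ind}(\mathcal{K}F,B_\varepsilon\cap K,K)=1$ for small $\varepsilon$. Choosing a strictly positive $h\in K$ and considering $U-\mathcal{K}F(U)=\tau h$, any solution in $K$ satisfies $U\ge\tau\mathcal{K}h$, so for $\tau$ sufficiently large the uniform bound prevents the existence of solutions in $B_R\cap K$; homotopy invariance then gives $\mathrm{ind}(\mathcal{K}F,B_R\cap K,K)=0$. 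Additivity produces a fixed point in $(B_R\setminus\bar{B}_\varepsilon)\cap K$, and the parabolic strong maximum principle promotes it to a strictly positive $T$-periodic solution of \eqref{auto-13}.

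The chief obstacle I anticipate is the half-space Liouville result for $p=2$: sliding in $x_1$ is technically more delicate for three equations with symmetric cross couplings than in the two-component case of Theorem \ref{th1.2}, and the boundary contributions in the integral identity behind Theorem \ref{th1.6}(i) must be handled carefully. The smallness of $\mu_1,\mu_2,\mu_3$ is not needed in the Liouville step itself but enters in the degree/homotopy step: it guarantees that the a priori constants remain under control and that no spurious branch of positive solutions of the nearly diagonal system $\partial_t U_i-\Delta U_i\approx\mu_i U_i^p$ interferes with the index change between the small and the large balls, so that the cross coupling $\beta$ remains dominant and the desired non-trivial index difference is produced.
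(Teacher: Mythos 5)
Your degree-theoretic skeleton matches the paper's: uniform a priori bounds from Liouville theorems, a topological degree or index equal to $1$ near the origin and $0$ far away, then additivity/excision to extract a nontrivial fixed point. The variations you propose---fixed-point index in the cone instead of Leray--Schauder degree with a positive-part trick; the forcing homotopy $U-\mathcal{K}F(U)=\tau h$ for the outer index instead of the paper's homotopy to the decoupled scalar problem $u_t-\Delta u=\lambda u+u^2$ with $\lambda>\lambda_1^T$ followed by an eigenfunction comparison---are both standard and lead to the same index difference. Your observation that a half-space Liouville result for $p=2$ is a potential obstacle in the rescaling step is also reasonable; the paper's Lemma \ref{lem4.4} cites Lemma \ref{lem3.3} for this and does not make the half-space case explicit.

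The genuine gap is in your final step: deducing strict positivity of all three components. The claim that ``the parabolic strong maximum principle promotes it to a strictly positive $T$-periodic solution'' does not hold. The cone index argument only produces a fixed point in $K\setminus\{0\}$, and this can perfectly well be semi-trivial, for instance $(u,0,0)$ with $u$ a positive $T$-periodic (even time-independent) solution of the scalar problem $u_t-\Delta u=\mu_1 u^p$: such $u$ exist for $1<p<p_S(N)$, and then $(u,0,0)$ solves \eqref{auto-13}, lies in $K$, and lies in the annulus once $R$ is taken above the a priori bound. The maximum principle is entirely consistent with this solution and cannot rule it out. This is precisely where the smallness of $\mu_1,\mu_2,\mu_3$ enters the paper's proof: one assumes a component vanishes identically, observes that the remaining components solve decoupled scalar periodic problems $z_t-\Delta z=\mu_i z^p$, invokes a bound $z\leq C$, tests against the periodic-parabolic principal eigenfunction $\psi$ from Lemma \ref{lem3.1}, and uses $\mu_i C^{p-1}<\lambda_1^T$ (for $\mu_i$ small) to conclude that $t\mapsto\int_\Omega z(\cdot,t)\psi\,dx$ is strictly decreasing, contradicting periodicity unless $z\equiv0$; iterating over components shows the fixed point is either trivial or strictly positive. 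Your write-up instead attributes the smallness hypothesis to controlling ``a priori constants'' and to producing a nontrivial ``index difference,'' but the index difference $1-0$ holds for every $\mu_i>0$; the smallness is needed specifically for the positivity step after the degree computation has already been carried out. Without that step your argument only delivers a nonnegative, possibly semi-trivial, $T$-periodic solution.
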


The outline of the rest of the paper is as follows. In Section 2 we
prove Liouville type results together with singularity and decay estimates for two coupled system \eqref{auto-1} in the whole
or half space. Section 3 is devoted in proving the existence of
periodic solutions for \eqref{auto-10}. Finally, in Section 4 we study
Liouville type results, singularity and decay estimates and prove the existence of periodic solutions
of the three coupled system \eqref{auto-2}.

\medskip
\begin{center}
Notation:
\end{center}

\indent Throughout the paper, the letter C will stand for positive constants which are allowed to vary among different formulas and even within the same lines. $\mathcal{D}(\Omega)$ denotes the set of all smooth function with compact support in $\Omega$.
%\begin{enumerate}
%\item [$\mathcal{D}(\Omega)$] \quad the set of all smooth function with compact support in $\Omega$.
%\smallskip
%\item [$C$] \quad  a generic positive constant which may change from line to line.
%\smallskip
%\item [$\mathbb{R}^N_+$] \quad the upper half plane $\{x\in\mathbb{R}^N:x_1>0\}$.
%\end{enumerate}
\bigskip

\section{The Liouville type results}
\subsection{The Liouville results in the whole space}
In this subsection we consider the two coupled system \eqref{auto-1} and give the proof of Theorem \ref{th1.1}, that is Liouville type results and singularity and decay estimates. The starting point is the following inequality.

\begin{lemma}
\label{lem-2.1}
Let $\Omega\subset\mathbb{R}^{N}$ be any open set and $d,m$ are two constants such that $d\neq m+2.$ Then for any
function $u\in C^2(\Omega)$ and any nonnegative $\xi\in\mathcal
{D}(\Omega)$,
\begin{equation}
\label{b-1}
\begin{split}
&\frac{2(N-m)d-(N-1)(m^2+d^2))}{4N}\int_{\Omega}\xi u^{m-2}|\nabla
u|^4dx-\frac{N-1}{N}\int_{\Omega}\xi u^m(\Delta u)^2dx\\
&\quad -\frac{2(N-1)m+(N+2)d}{2N}\int_{\Omega}\xi u^{m-1}|\nabla
u|^2\Delta udx\\
&\leq\frac{m+d}{2}\int_{\Omega}u^{m-1}|\nabla u|^2\nabla
u\cdot\nabla\xi dx+\int_{\Omega}u^m\Delta u\nabla
u\cdot\nabla\xi dx+\frac{1}{2}\int_{\Omega}u^m|\nabla u|^2\Delta\xi dx.
\end{split}
\end{equation}
\end{lemma}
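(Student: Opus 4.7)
The plan is to derive \eqref{b-1} starting from Bochner's identity combined with a pointwise Cauchy--Schwarz inequality for the Hessian, after integrating suitably weighted variants by parts. The parameter $d$ will be introduced via the choice of exponent in a change-of-variable $v = u^{\alpha}$ (equivalently, via a weighted combination of two Bochner identities at different powers of $u$), with the restriction $d\neq m+2$ entering as a non-degeneracy requirement at the algebraic inversion step.

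First, I would apply the Bochner identity
\begin{equation*}
\tfrac{1}{2}\Delta |\nabla v|^2 = |D^2 v|^2 + \nabla v \cdot \nabla(\Delta v), \qquad v = u^{\alpha},
\end{equation*}
multiply both sides by the weight $\xi u^{m-2\alpha+2}$, and integrate over $\Omega$. Two integrations by parts --- moving the outer Laplacian off $|\nabla v|^2$ via $\int \xi u^{m-2\alpha+2} \Delta|\nabla v|^2\, dx = \int |\nabla v|^2 \Delta(\xi u^{m-2\alpha+2})\, dx$, and transforming the second term via $\int \xi u^{m-2\alpha+2} \nabla v\cdot\nabla\Delta v\,dx = -\int \Delta v\,\mathrm{div}(\xi u^{m-2\alpha+2}\nabla v)\,dx$ --- transfer the derivatives onto the weight and yield an identity for $\int_\Omega \xi u^m |D^2 u|^2\,dx$ as a linear combination of $\int\xi u^{m-2}|\nabla u|^4\,dx$, $\int\xi u^m(\Delta u)^2\,dx$, $\int\xi u^{m-1}|\nabla u|^2\Delta u\,dx$, and $\int \xi u^{m-1} u_iu_ju_{ij}\,dx$, together with the $\nabla\xi$ and $\Delta\xi$ boundary-type integrals appearing in \eqref{b-1}.

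The Hessian cross-term $u_iu_ju_{ij} = \tfrac{1}{2}\nabla u\cdot\nabla|\nabla u|^2$ arises from expanding $|D^2 v|^2$ and $\Delta v$ back in terms of $u$. I would eliminate it by applying the auxiliary divergence identity
\begin{equation*}
\int_\Omega \xi\,\mathrm{div}\bigl(u^{m-1}|\nabla u|^2\nabla u\bigr)\,dx \;=\; -\int_\Omega u^{m-1}|\nabla u|^2\nabla u\cdot\nabla\xi\,dx,
\end{equation*}
which expresses $\int \xi u^{m-1}u_iu_ju_{ij}\,dx$ as a linear combination of $\int\xi u^{m-2}|\nabla u|^4\,dx$, $\int\xi u^{m-1}|\nabla u|^2\Delta u\,dx$, and the $\nabla\xi$-boundary integral. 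Next I would invoke the pointwise Cauchy--Schwarz bound
\begin{equation*}
|D^2 v|^2 \;\geq\; \tfrac{1}{N}(\Delta v)^2,
\end{equation*}
which stems from $(\sum_i \lambda_i)^2 \leq N\sum_i \lambda_i^2$ applied to the eigenvalues of the Hessian. Substituting this bound into the identity and re-expanding $\Delta v = \alpha u^{\alpha-1}\Delta u + \alpha(\alpha-1)u^{\alpha-2}|\nabla u|^2$ produces the factor $(N-1)/N$ in front of the $(\Delta u)^2$ integral, and, after the reparametrization $d := 2\alpha - 2$ (equivalently, taking a linear combination of the Bochner identity at weight $\xi u^m$ with a second one at weight $\xi u^d$), it yields the two-parameter combinatorics $\tfrac{2(N-m)d-(N-1)(m^2+d^2)}{4N}$ and $\tfrac{2(N-1)m+(N+2)d}{2N}$ in the coefficients of \eqref{b-1}.

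The main obstacle is combinatorial rather than conceptual: the rigid rational coefficients of \eqref{b-1} demand careful collection of the several quadratic expressions in $m$ and $d$ produced along the way, and in particular the bookkeeping needed to propagate the Hessian cross-term $\nabla u\cdot\nabla|\nabla u|^2$ through all intermediate manipulations must be done without algebraic slips. The hypothesis $d\neq m+2$ is precisely the requirement that a denominator arising in this algebraic inversion does not vanish. The analytical content --- Bochner's identity, the Cauchy--Schwarz bound $|D^2 u|^2\geq N^{-1}(\Delta u)^2$, and two layers of integration by parts --- is classical and follows the approach of V\'eron in \cite{Bidaut-1998-Initial}.
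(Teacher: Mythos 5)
Your overall strategy matches the paper's: apply the Bochner identity to a power of $u$, bound $|\mathrm{Hess}\,f|^2 \geq \tfrac1N(\Delta f)^2$, multiply by a suitable weight, and integrate by parts to move derivatives onto $\xi$ and a power of $u$. The paper only sketches this and defers to [Bidaut--V\'eron and Raoux, Lemma~3.1] for the full computation, so at the level of detail you give, the route is the same.

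However, there is a concrete slip in the parametrization that breaks your bookkeeping. You (correctly) set $v=u^{\alpha}$ and multiply by the weight $\xi\,u^{m-2\alpha+2}$, which forces the scaling so that the cross term $u^{2\alpha-4}|\nabla u|^4$ picks up exponent $m-2$, etc. But for this weight to be $\xi u^d$ --- which is what the coefficients in \eqref{b-1} require and what the paper uses --- you need $d=m-2\alpha+2$, i.e.\ $\alpha=\tfrac{m+2-d}{2}$. You instead declare $d:=2\alpha-2$, which gives $\alpha=\tfrac{d+2}{2}$ and turns the weight into $\xi u^{m-d}$, not $\xi u^d$; plugging that into the expansion will not reproduce the coefficients $\tfrac{2(N-m)d-(N-1)(m^2+d^2)}{4N}$ and $\tfrac{2(N-1)m+(N+2)d}{2N}$. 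The corrected substitution also explains the hypothesis cleanly: $d\neq m+2$ is exactly $\alpha\neq 0$, which is needed both because $f=u^{\alpha}$ is constant when $\alpha=0$ and because the paper normalizes by multiplying by $\bigl(\tfrac{m+2-d}{2}\bigr)^{-2}=\alpha^{-2}$ to strip off the factor $\alpha^2$ coming from $\nabla(u^{\alpha})=\alpha u^{\alpha-1}\nabla u$. Your claim that $d\neq m+2$ is ``the requirement that a denominator does not vanish'' is right in spirit, but under your stated reparametrization $d=2\alpha-2$ the degenerate value would be $d=-2$, not $d=m+2$, which is a sign that the substitution is off. The parenthetical ``equivalently, a linear combination of Bochner identities at two weights'' is not an equivalent reformulation of the one-parameter substitution and should be dropped or justified separately.
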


\begin{proof}
The proof is based on the following well-known B\"ochner-Wietzenb\"ock formula
$$\frac12\Delta\left(|\nabla f|^2\right)=|\mathrm{Hess}f|^2+(\nabla\Delta f)\nabla f
\geq\frac1N(\Delta f)^2+(\nabla\Delta f)\nabla f,$$
where $\mathrm{Hess}f$ is the Hessian of $f$. Substituting $f=u^{\frac{m+2-d}{2}}$ into the above inequality and multiplying the inequality by $(\frac{m+2-d}{2})^{-2}\xi u^d$. After using the integration by parts we get \eqref{b-1}. We refer the readers to \cite[Lemma 3.1]{Bidaut-Raoux-1996-CPDE} for the details.
\end{proof}

With Lemma \ref{lem-2.1}, we follow the idea of \cite{Bidaut-1998-Initial,Souplet-2016-Math-Ann} to derive a-priori estimates for the nonlinearities of \eqref{auto-1}. Let $(u,v)\in C^{2,1}(Q\times(-T_1,T_2))$ be a positive solution of \eqref{auto-1}, and $Q$ be any domain of $\mathbb{R}^{N}$ such that $G=\Omega\times(t_1, t_2)\subset\overline{G}\subset Q\times(-T_1,T_2)$, where $T_1, T_2>0$. By Lemma \ref{lem-2.1}, we give the essential interior estimate for the positive
solution $(u,v)$ of \eqref{auto-1}.

\begin{lemma}
\label{lem-2.2}
Assume that $\mu_1, \mu_2, \beta>0$ and $1<p<\frac{N(N+2)}{(N-1)^2}$. Let $\zeta\in\mathcal {D}(G)$ with value in $[0,1]$ and $q>4$. Then there exists a constant $C=C(N,p,q)>0$ such that
\begin{equation}
\label{b-2}
\begin{split}
&\int_{G}\zeta^{q}\left(u^{-2}|\nabla u|^4+v^{-2}|\nabla v|^4\right)dxdt
+\int_{G}\zeta^{q}\left(u^{p-1}|\nabla u|^2+v^{p-1}|\nabla v|^2\right)dxdt\\
&\quad+\int_{G}\zeta^q(u_t^2+v_t^2)dxdt+\int_{G}\zeta^{q}\left(|\nabla u|^2v+|\nabla v|^2u^2v^{-1}\right)dxdt\\
&\leq C\int_{G}\zeta^{q-2}\left(u^{p+1}+v^{p+1}\right)\left(|\zeta_t|+|\nabla\zeta|^2\right)dxdt+C\beta\int_{G}\zeta^{q-2}u^2v\left(|\zeta_t|+|\nabla\zeta|^2\right)dxdt\\
&\quad+C\int_{G}\zeta^{q-4}\left(u^2+v^2\right)\left(|\Delta\zeta|^2+|\nabla\zeta|^4+|\zeta_t|^2\right)dxdt.
\end{split}
\end{equation}
\end{lemma}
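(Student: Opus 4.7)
The plan is to apply Lemma \ref{lem-2.1} separately to $u$ and to $v$ with parameter choice $m=0$ and test function $\xi=\zeta^q$, and then combine the two resulting inequalities using the coupling structure of \eqref{auto-1}. With $m=0$, the leading term in Lemma \ref{lem-2.1} becomes $\tfrac{2Nd-(N-1)d^2}{4N}\int\zeta^q u^{-2}|\nabla u|^4$ (and similarly for $v$); I would fix $d$ in the window $\bigl(\tfrac{2(N-1)p}{N+2},\,\tfrac{2N}{N-1}\bigr)$, which is non-empty exactly when $p<p_B(N)$, so that this coefficient is strictly positive and, as explained below, the coefficient of $\int\zeta^q u^{p-1}|\nabla u|^2$ also remains positive after all rearrangements.

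First I substitute $\Delta u=u_t-\mu_1u^p-\beta uv$ and $\Delta v=v_t-\mu_2v^p-\tfrac{\beta}{2}u^2$ into the two applications of Lemma \ref{lem-2.1}. The term $-\tfrac{(N+2)d}{2N}\int\zeta^q u^{-1}|\nabla u|^2\Delta u$ directly produces the positive LHS contributions $\tfrac{(N+2)d\mu_1}{2N}\int\zeta^q u^{p-1}|\nabla u|^2$ and $\tfrac{(N+2)d\beta}{2N}\int\zeta^q v|\nabla u|^2$, together with a mixed time-derivative term that a Young inequality splits between $\int\zeta^q u^{-2}|\nabla u|^4$ and $\int\zeta^q u_t^2$. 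The corresponding expansion in the $v$-inequality produces $\int\zeta^q v^{p-1}|\nabla v|^2$ and, from the cross piece $-\tfrac{\beta}{2}\zeta^q v^{-1}|\nabla v|^2 u^2$, the desired $\int\zeta^q u^2v^{-1}|\nabla v|^2$ term. The $u_t^2$ and $v_t^2$ terms on the LHS are produced by an independent step: multiplying the two equations in \eqref{auto-1} by $\zeta^q u_t$ and $\zeta^q v_t$ respectively, integrating by parts in space and time, and absorbing via Young's inequality.

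The delicate part is the term $-\tfrac{N-1}{N}\int\zeta^q(\Delta u)^2$, whose expansion produces the positive squares $\mu_1^2u^{2p}$, $\beta^2u^2v^2$, $2\mu_1\beta u^{p+1}v$ which, if simply moved to the right-hand side, would be uncontrollable. Following \cite{Bidaut-1998-Initial}, I would convert each such square back to LHS-type quantities using the equations themselves: writing $\mu_1^2u^{2p}=\mu_1u^p(u_t-\Delta u-\beta uv)$ and integrating by parts in space and time yields $-\tfrac{\mu_1}{p+1}\int(\zeta^q)_t u^{p+1}$ (an acceptable remainder) plus $\mu_1p\int\zeta^q u^{p-1}|\nabla u|^2$ plus lower order terms. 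Analogous rewritings of $\beta^2u^2v^2=\beta uv(u_t-\Delta u-\mu_1u^p)$ and of $2\mu_1\beta u^{p+1}v$ recycle them into the LHS quantities $v|\nabla u|^2$ and $u^2v^{-1}|\nabla v|^2$ (the latter via Cauchy--Schwarz on a gradient cross term $\beta\int\zeta^q u\nabla v\cdot\nabla u\le\tfrac{\beta}{2}\int\zeta^q v|\nabla u|^2+\tfrac{\beta}{2}\int\zeta^q u^2v^{-1}|\nabla v|^2$). The key $\chi^{(2)}$ cancellation appears in the time-derivative cross terms: integration by parts in $t$ on $\int\zeta^q uvu_t$ in the $u$-inequality produces $-\tfrac{(N-1)\beta}{N}\int\zeta^q u^2v_t$, which precisely cancels the $+\tfrac{(N-1)\beta}{N}\int\zeta^q u^2v_t$ coming from the $\tfrac{\beta}{2}u^2\cdot v_t$ piece of $-(\Delta v)^2$ in the $v$-inequality; here the specific ratio between the coefficients $\beta$ and $\beta/2$ in \eqref{auto-1} is essential.

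Adding the two modified inequalities and absorbing, via Young's inequality, every derivative-of-$\zeta$ cross term (of the forms $|\nabla(\zeta^q)|\,|\nabla u|^3/u$, $|\nabla(\zeta^q)|\,u^p|\nabla u|$, $|\Delta(\zeta^q)|\,|\nabla u|^2$, etc.) into small multiples of the LHS quantities $\zeta^q u^{-2}|\nabla u|^4$, $\zeta^q u^{p-1}|\nabla u|^2$, $\zeta^q u_t^2$ and their $v$-analogues, and controlling residual $|\nabla u|^2$ contributions by the Young bound $|\nabla u|^2\le\tfrac12u^{-2}|\nabla u|^4+\tfrac12u^2$, leaves remainders precisely of the forms $\zeta^{q-2}(u^{p+1}+v^{p+1})(|\zeta_t|+|\nabla\zeta|^2)$, $\zeta^{q-2}u^2v(|\zeta_t|+|\nabla\zeta|^2)$ and $\zeta^{q-4}(u^2+v^2)(|\Delta\zeta|^2+|\nabla\zeta|^4+|\zeta_t|^2)$, which is \eqref{b-2}. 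The main obstacle is a bookkeeping one: every conversion of a squared nonlinearity creates new mixed $u$--$v$ cross terms, so the order of the conversions and Young absorptions must be chosen carefully; in the end, the positivity of the final coefficient of $\int\zeta^q u^{p-1}|\nabla u|^2$, which equals $\tfrac{\mu_1}{2N}\bigl[(N+2)d-2(N-1)p\bigr]$ after combining the direct contribution with the $\mu_1p$-piece arising from the $u^{2p}$ conversion, is exactly what forces the hypothesis $p<p_B(N)=\tfrac{N(N+2)}{(N-1)^2}$.
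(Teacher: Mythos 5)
Your proposal follows essentially the same strategy as the paper: apply Lemma~\ref{lem-2.1} with $m=0$ and $\xi=\zeta^q$ to $u$ and to $v$, pick $d$ in the window $\bigl(\max\{\tfrac{2(N-1)p}{N+2},\tfrac{4(N-1)}{N+2}\},\tfrac{2N}{N-1}\bigr)$ so that all leading coefficients are positive, substitute the PDE for the Laplacian terms, bound the cross gradient term $\beta\int\zeta^q u\nabla u\cdot\nabla v$ by Cauchy--Schwarz exactly as in \eqref{b-12}, obtain the $u_t^2+v_t^2$ term by multiplying the equations by $\zeta^q u_t$ and $\zeta^q v_t$, and absorb cut-off derivative contributions by Young's inequality. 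The one substantive divergence is your handling of $-\tfrac{N-1}{N}\int\zeta^q(\Delta u)^2$: the paper replaces \emph{one} of the two $\Delta u$ factors by $u_t-\mu_1u^p-\beta uv$ and then integrates by parts (see \eqref{b-5}), which never produces the squared nonlinearities $u^{2p}$, $u^2v^2$, $u^{p+1}v$ at all. You instead expand $(\Delta u)^2=(u_t-\mu_1u^p-\beta uv)^2$ in full and then ``recycle'' the resulting squares via the equation, which forces a delicate cancellation bookkeeping. I checked that this does work: after recycling $\mu_1^2u^{2p}=\mu_1u^p(u_t-\Delta u-\beta uv)$ and $\beta^2u^2v^2=\beta uv(u_t-\Delta u-\mu_1u^p)$, the coefficients of $u^{p+1}v$ sum to $-2+1+1=0$, and algebraically the expansion-plus-recycling route reduces identically to the paper's one-factor-replacement route (since $F^2=F(u_t-\Delta u)$ with $F=\mu_1u^p+\beta uv$). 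So your route is correct, just less economical. One further remark: the $\chi^{(2)}$ cancellation you highlight between the $u^2v_t$ pieces is genuine, but in the paper it surfaces not in the $(\Delta u)^2$ treatment but in the $u_t^2+v_t^2$ estimate, where $u_t\cdot\beta uv+v_t\cdot\tfrac{\beta}{2}u^2=\tfrac{\beta}{2}\partial_t(u^2v)$ is an exact time derivative and is absorbed as $\tfrac{dh}{dt}-H$ in \eqref{b-17}--\eqref{b-18}; your route relocates that same structural fact into the $(\Delta u)^2$ bookkeeping.
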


\begin{proof}
Using \eqref{b-1} for positive solution $(u,v)$ of \eqref{auto-1} with $m=0$, any constant $d\neq 2$ and $\xi=\zeta^q=\zeta^q(t,\cdot)$ for any $t\in[t_1,t_2]$, we get
\begin{equation}
\label{b-3}
\begin{split}
&A_0\int_{\Omega}\zeta^qu^{-2}|\nabla u|^4dx-\frac{N-1}{N}\int_{\Omega}\zeta^q(\Delta u)^2dx-
\frac{(N+2)d}{2N}\int_{\Omega}\zeta^qu^{-1}|\nabla u|^2\Delta udx\\
&\leq\frac{d}{2}\int_{\Omega}u^{-1}|\nabla u|^2\nabla
u\nabla(\zeta^q)dx+\int_{\Omega}\Delta u\nabla
u\nabla(\zeta^q)dx+\frac{1}{2}\int_{\Omega}|\nabla u|^2\Delta(\zeta^q)dx
\end{split}
\end{equation}
and
\begin{equation}
\label{b-4}
\begin{split}
&A_0\int_{\Omega}\zeta^qv^{-2}|\nabla v|^4dx
-\frac{N-1}{N}\int_{\Omega}\zeta^q(\Delta v)^2dx
-\frac{(N+2)d}{2N}\int_{\Omega}\zeta^qv^{-1}|\nabla v|^2\Delta vdx\\
&\leq\frac{d}{2}\int_{\Omega}v^{-1}|\nabla v|^2\nabla
v\nabla(\zeta^q)dx+\int_{\Omega}\Delta v\nabla
v\nabla(\zeta^q)dx+\frac{1}{2}\int_{\Omega}|\nabla
v|^2\Delta(\zeta^q)dx,
\end{split}
\end{equation}
where
$$A_0=\frac{\left(2Nd-(N-1)d^2\right)}{4N}.$$
Since the study of these two equations is the same, we shall only focus on the first one. From \eqref{auto-1} we get
\begin{equation}
\label{b-5}
\begin{split}
-\int_{\Omega}\zeta^q(\Delta u)^2dx&=-\int_{\Omega}\zeta^q\Delta
u(u_t-\mu_1u^p-\beta uv)dx\\
&=\frac{df_u}{dt}-P_u+\int_{\Omega}\nabla(\zeta^q)\nabla
uu_tdx-\mu_1p\int_{\Omega}\zeta^qu^{p-1}|\nabla
u|^2dx-\mu_1\int_{\Omega}\nabla(\zeta^q)\nabla
uu^{p}dx\\
&\quad-\beta\int_{\Omega}\nabla(\zeta^q)\nabla u
uvdx-\beta\int_{\Omega}\zeta^q|\nabla
u|^2vdx-\beta\int_{\Omega}\zeta^q\nabla u\nabla vudx,
\end{split}
\end{equation}
where
\begin{equation}
\label{b-6}
f_u=\frac{1}{2}\int_{\Omega}\zeta^q|\nabla u|^2dx
\quad\text{and}\quad
P_u=\frac{1}{2}\int_{\Omega}\left(\zeta^q\right)_t|\nabla u|^2dx.
\end{equation}
Similarly,
\begin{equation}
\label{b-7}
\begin{split}
-\int_{\Omega}\zeta^qu^{-1}|\nabla u|^2\Delta udx
&=-\int_{\Omega}\zeta^qu^{-1}|\nabla u|^2(u_t-\mu_1u^p-\beta uv)dx\\
&=-\int_{\Omega}\zeta^qu^{-1}|\nabla u|^2u_tdx
+\mu_1\int_{\Omega}\zeta^qu^{p-1}|\nabla u|^2dx
+\beta\int_{\Omega}\zeta^q|\nabla u|^2vdx,
\end{split}
\end{equation}
and
\begin{equation}
\label{b-7-1}
\begin{split}
\int_\Omega\nabla\zeta^q\cdot\nabla u\Delta udx
&=\int_\Omega\nabla\zeta^q\cdot\nabla u(u_t-\mu_1 u^p-\beta uv)dx\\
&=\int_\Omega\nabla\zeta^q\cdot\nabla u u_tdx-\mu_1\int_\Omega\nabla\zeta^q\cdot\nabla uu^pdx-\beta\int_\Omega\nabla\zeta^q\cdot\nabla uuvdx.
\end{split}
\end{equation}
Substituting \eqref{b-5}-\eqref{b-7-1} into \eqref{b-3}, we obtain
\begin{equation}
\label{b-8}
\begin{split}
&A_0\int_{\Omega}\zeta^qu^{-2}|\nabla u|^4dx
+B_1\int_{\Omega}\zeta^qu^{p-1}|\nabla u|^2dx
+C_1\int_{\Omega}\zeta^qv|\nabla u|^2dx-\frac{N-1}{N}Q\\
&\leq-\frac{N-1}{N}\left(\frac{df_u}{dt}-P_u\right)+\frac{(N+2)d}{2N}Y_u+\frac{1}{N}Z_u-\frac{1}{N}V_u
+\frac{d}{2}W_u+\frac{1}{2}R_u-\frac1NS_u,
\end{split}
\end{equation}
where
\begin{equation}
\label{b-9}
\begin{split}
&B_1=\frac{(N+2)d-2(N-1)p}{2N}\mu_1,\ C_1=\frac{(N+2)d-2(N-1)}{2N}\beta,\ Y_u=\int_{\Omega}\zeta^qu^{-1}|\nabla u|^2u_tdx,\\
&Z_u=\int_{\Omega}\nabla(\zeta^q)\nabla u u_tdx,\
V_u=\mu_1\int_{\Omega}u^p\nabla(\zeta^q)\nabla udx,\
W_u=\int_{\Omega}u^{-1}|\nabla u|^2\nabla u\nabla(\zeta^q)dx,\\
&R_u=\int_{\Omega}\Delta(\zeta^q)|\nabla u|^2dx,\
S_u=\beta\int_{\Omega}\nabla u\nabla(\zeta^q)uvdx\quad \text{and}\quad Q=\beta\int_{\Omega}\nabla u\nabla vu\zeta^qdx.
\end{split}
\end{equation}
Analogously,
\begin{equation}
\label{b-10}
\begin{split}
&A_0\int_{\Omega}\zeta^qv^{-2}|\nabla v|^4dx
+B_2\int_{\Omega}\zeta^qv^{p-1}|\nabla v|^2dx
+C_2\int_{\Omega}\zeta^qu^2v^{-1}|\nabla v|^2dx-\frac{N-1}{N}Q\\
&\leq-\frac{N-1}{N}\left(\frac{df_v}{dt}-P_v\right)+\frac{(N+2)d}{2N}Y_v+\frac{1}{N}Z_v-\frac{1}{N}V_v
+\frac{d}{2}W_v+\frac{1}{2}R_v-\frac1NS_v,
\end{split}
\end{equation}
where
\begin{equation}
\label{b-11}
\begin{split}
&\ B_2=\frac{(N+2)d-2(N-1)p}{2N}\mu_2, C_2=\frac{(N+2)d}{4N}\beta,
f_v=\frac{1}{2}\int_{\Omega}\zeta^q|\nabla v|^2dx,
P_v=\frac{1}{2}\int_{\Omega}\left(\zeta^q\right)_t|\nabla v|^2dx, \\
&Y_v=\int_{\Omega}\zeta^qv^{-1}|\nabla v|^2v_tdx,\
Z_v=\int_{\Omega}\nabla(\zeta^q)\nabla vv_tdx,\
V_v=\mu_2\int_{\Omega}v^p\nabla(\zeta^q)\nabla vdx,\\
&W_v=\int_{\Omega}v^{-1}|\nabla v|^2\nabla v\nabla(\zeta^q)dx,\
R_v=\int_{\Omega}\Delta(\zeta^q)|\nabla v|^2dx
\quad \text{and}\quad
S_v=\frac{\beta}{2}\int_{\Omega}\nabla v\nabla(\zeta^q)u^2dx.
\end{split}
\end{equation}
By H\"older's inequality,
\begin{equation}
\label{b-12}
\begin{split}
2Q=2\beta\int_{\Omega}\nabla u\nabla vu\zeta^qdx
\leq\beta\int_{\Omega}\zeta^q|\nabla u|^2vdx
+\beta\int_{\Omega}\zeta^q|\nabla v|^2u^2v^{-1}dx.
\end{split}
\end{equation}
From \eqref{b-8}, \eqref{b-10} and \eqref{b-12} we get
\begin{equation}
\label{b-13}
\begin{split}
&A_0\int_{\Omega}\zeta^q(u^{-2}|\nabla u|^4+v^{-2}|\nabla
v|^4)dx
+B_3\int_{\Omega}\zeta^q(u^{p-1}|\nabla u|^2+v^{p-1}|\nabla
v|^2)dx\\
&\quad+C_3\int_{\Omega}\zeta^q(v|\nabla u|^2+u^2v^{-1}|\nabla v|^2)dx\\
&\leq-\frac{N-1}{N}\left(\frac{df_u}{dt}-P_u+\frac{df_v}{dt}-P_v\right)+\frac{(N+2)d}{2N}(Y_u+Y_v)+\frac{1}{N}(Z_u+Z_v)\\
&\quad-\frac{1}{N}(V_u+V_v)+\frac{d}{2}(W_u+W_v)+\frac{1}{2}(R_u+R_v)-\frac1N(S_u+S_v),
\end{split}
\end{equation}
where
\begin{equation}
\label{b-14}
B_3=\min\{\mu_1,\mu_2\}\frac{(N+2)d-2(N-1)p}{2N}
\quad\text{and}\quad
C_3=\frac{(N+2)d-4(N-1)}{4N}\beta.
\end{equation}
Since $2\leq N\leq 5$ and $p<\frac{N(N+2)}{(N-1)^2}$, we can always choose $d>0$ such that
\begin{equation}
\label{b-15}
\max\left\{\frac{2(N-1)p}{N+2},\frac{4(N-1)}{N+2}\right\}<d<\frac{2N}{N-1}.
\end{equation}
Concerning the last six terms on the right-hand side of \eqref{b-13}, by Young's inequality, we have
\begin{equation}
\label{b-16}
\begin{split}
Y_u+Y_v&\leq \varepsilon\int_\Omega\zeta^q(u^{-2}|\nabla u|^4+v^{-2}|\nabla v|^4)dx+\frac{1}{4\varepsilon}\int_{\Omega}\zeta^q(u_t^2+v_t^2)dx,
\\
Z_u+Z_v&\leq\frac12\int_{\Omega}\zeta^q(u_t^2+v_t^2)dx+\varepsilon^2\int_{\Omega}\zeta^q(u^{-2}|\nabla u|^4+v^{-2}|\nabla v|^4)dx\\
&\quad+C(\varepsilon)\int_{\Omega}\zeta^{q-4}|\nabla\zeta|^4(u^2+v^2)dx,\\
V_u+V_v&\leq\varepsilon\int_{\Omega}\zeta^q(u^{p-1}|\nabla
u|^2+v^{p-1}|\nabla v|^2)dx+C(\varepsilon)\int_{\Omega}\zeta^{q-2}|\nabla \zeta|^2(u^{p+1}+v^{p+1})dx,\\
W_u+W_v&\leq\varepsilon\int_{\Omega}\zeta^q(u^{-2}|\nabla u|^4+v^{-2}|\nabla v|^4)dx+C(\varepsilon)\int_{\Omega}\zeta^{q-4}|\nabla\zeta|^4(u^2+v^2)dx\\
R_u+R_v&\leq\varepsilon\int_{\Omega}\zeta^q(u^{-2}|\nabla u|^4+v^{-2}|\nabla v|^4)dx
+C(\varepsilon)\int_{\Omega}\zeta^{q-4}|\nabla\zeta|^4(u^2+v^2)dx\\&\quad+C(\varepsilon)\int_{\Omega}\zeta^{q-2}|\Delta\zeta|^2(u^{2}+v^2)dx\\
S_u+S_v&\leq\varepsilon\beta\int_{\Omega}\zeta^q(v|\nabla u|^2+u^2v^{-1}|\nabla v|^2)dx+C(\varepsilon)\beta\int_{\Omega}\zeta^{q-2}|\nabla\zeta|^2u^2vdx,
\end{split}
\end{equation}
where $\varepsilon$ is arbitrarily small. %Since $\int_{\Omega}\zeta^q(u_t^2+v_t^2)dxdt$ does not appear on the left-hand side of \eqref{b-13}, we need to control it by the other way.
From the original equation \eqref{auto-1}, we see that
\begin{equation}
\label{b-17}
\begin{split}
\int_{\Omega}\zeta^q(u_t^2+v_t^2)dx&=\int_{\Omega}\zeta^qu_t(\Delta u+\mu_1u^p+\beta uv)dx+
\int_{\Omega}\zeta^qv_t(\Delta v+\mu_2v^p+\frac{\beta}{2}u^2)dx
\\&=\frac{d(g_u-f_u)}{dt}+\frac{d(g_v-f_v)}{dt}+(P_u+P_v)-(Z_u+Z_v)-(K_u+K_v)+\frac{dh}{dt}-H,
\end{split}
\end{equation}
where
\begin{equation}
\label{b-18}
\begin{split}
&g_u=\frac{\mu_1}{p+1}\int_{\Omega}\zeta^qu^{p+1}dx,\
g_v=\frac{\mu_2}{p+1}\int_{\Omega}\zeta^qv^{p+1}dx,\
K_u=\frac{\mu_1}{p+1}\int_{\Omega}(\zeta^q)_tu^{p+1}dx,\\
&K_v=\frac{\mu_2}{p+1}\int_{\Omega}(\zeta^q)_tv^{p+1}dx,\
h=\frac{\beta}{2}\int_{\Omega}\zeta^qu^2vdx\quad\text{and}\quad
H=\frac{\beta}{2}\int_{\Omega}(\zeta^q)_tu^2vdx.
\end{split}
\end{equation}
Using Young's inequality for the terms $P_u$ and $P_v,$ we gain
\begin{equation}
\label{b-19}
P_u+P_v\leq\varepsilon^2\int_\Omega\zeta^q(u^{-2}|\nabla u|^4+v^{-2}|\nabla v|^4)dx+C(\varepsilon)\int_{\Omega}\zeta^{q-2}\zeta_t^2(u^2+v^2)dx.
\end{equation}
Combining \eqref{b-17}, \eqref{b-19} and the estimation on $Z_u+Z_v$ in \eqref{b-16}, we deduce that
\begin{equation}
\label{b-21}
\begin{split}
\frac{1}{2}\int_{\Omega}\zeta^q(u_t^2+v_t^2)dx&\leq\frac{d(g_u-f_u)}{dt}+\frac{d(g_v-f_v)}{dt}+
2\varepsilon^2\int_{\Omega}\zeta^q(u^{-2}|\nabla u|^4+v^{-2}|\nabla v|^4)dx+\frac{dh}{dt}\\
&\quad+C(\varepsilon)\int_{\Omega}(\zeta^{q-2}\zeta_t^2+\zeta^{q-4}|\nabla\zeta|^4)(u^2+v^2)dxdt
+\frac{\beta}{2}q\int_{\Omega}|\zeta_t|\zeta^{q-1}u^2vdx\\
&\quad+C\int_{\Omega}\zeta^{q-1}|\zeta_t|(u^{p+1}+v^{p+1})dx.
\end{split}
\end{equation}
Using the assumption $\zeta\in[0,1]$, \eqref{b-13}, \eqref{b-16} and \eqref{b-21},
\begin{equation}
\label{b-23}
\begin{split}
&(A_0-\varepsilon)\int_{\Omega}\zeta^q(u^{-2}|\nabla
u|^4+v^{-2}|\nabla v|^4)dx
+(B_3-\varepsilon)\int_{\Omega}\zeta^q(u^{p-1}|\nabla
u|^2+v^{p-1}|\nabla v|^2)dx
\\&\quad+(C_3-\varepsilon)\int_{\Omega}\zeta^q(v|\nabla u|^2+u^2v^{-1}|\nabla v|^2)dx\\
&\leq-\frac{N-1}{N}\frac{d(f_u+f_v)}{dt}+C(\varepsilon)\frac{d(g_u-f_u)}{dt}
+C(\varepsilon)\frac{d(g_v-f_v)}{dt}+C(\varepsilon)\frac{dh}{dt}\\
&\quad+C(\varepsilon)\beta\int_{\Omega}\zeta^{q-2}(|\zeta_t|+|\nabla\zeta|^2)u^2vdx
+C(\varepsilon)\int_{\Omega}\zeta^{q-2}(|\zeta_t|+|\nabla\zeta|^2)(u^{p+1}+v^{p+1})dx\\
&\quad+C(\varepsilon)\int_{\Omega}\zeta^{q-4}(|\Delta\zeta|^2+|\nabla\zeta|^4+|\zeta_t|^2)(u^2+v^2)dx.
\end{split}
\end{equation}
As a consequence,
\begin{equation}
\label{b-24}
\begin{split}
&\int_{\Omega}\zeta^q(u^{-2}|\nabla u|^4+v^{-2}|\nabla v|^4)dx
+\int_{\Omega}\zeta^q(u^{p-1}|\nabla u|^2+v^{p-1}|\nabla
v|^2)dx\\
&\quad+\int_{\Omega}\zeta^q(v|\nabla u|^2+u^2v^{-1}|\nabla v|^2)dx\\
&\leq
C\left(\frac{df_u}{dt}+\frac{df_v}{dt}+\frac{dg_u}{dt}+\frac{dg_v}{dt}+\frac{dh}{dt}\right)+
C\int_{\Omega}\zeta^{q-4}(|\Delta\zeta|^2+|\nabla\zeta|^4+|\zeta_t|^2)(u^2+v^2)dx\\
&\quad+C\beta\int_{\Omega}\zeta^{q-2}(|\zeta_t|+|\nabla\zeta|^2)u^2vdx+C\int_{\Omega}\zeta^{q-2}(|\zeta_t|+|\nabla\zeta|^2)(u^{p+1}+v^{p+1})dx.
\end{split}
\end{equation}
Integrating both sides from $t_1$ to $t_2,$ together with the fact that $\zeta\in\mathcal{D}(G)$, we get
\begin{equation}
\label{b-25}
\begin{split}
&\int_{G}\zeta^q(u^{-2}|\nabla u|^4+v^{-2}|\nabla
v|^4)dxdt+\int_{G}\zeta^q(u^{p-1}|\nabla u|^2+v^{p-1}|\nabla
v|^2)dxdt\\
&\quad+\int_{G}\zeta^q(v|\nabla u|^2+u^2v^{-1}|\nabla v|^2)dxdt\\
&\leq C\int_{G}\zeta^{q-4}(|\Delta\zeta|^2+|\nabla\zeta|^4+|\zeta_t|^2)(u^2+v^2)dxdt
+C\beta\int_{G}\zeta^{q-2}(|\zeta_t|+|\nabla\zeta|^2)u^2vdxdt\\
&\quad+C\int_{G}\zeta^{q-2}(|\zeta_t|+|\nabla\zeta|^2)(u^{p+1}+v^{p+1})dxdt.
\end{split}
\end{equation}
Finally, using \eqref{b-21} and \eqref{b-25} we derive that $\int_{\Omega}\zeta^q(u_t^2+v_t^2)dxdt$ can be also controlled by the right-hand side of \eqref{b-25}. Hence we get \eqref{b-2} and it finishes the proof.
\end{proof}

Now we are going to estimate the nonlinear term of \eqref{auto-1}.
\begin{lemma}
\label{lem-2.3}
Suppose that the assumptions of Lemma \ref{lem-2.2} hold and
$q>\frac{4p}{p-1} $. Then there exists a
positive constant $C=C(N,p,q)>0$ such that
\begin{equation}
\label{b-26}
\begin{split}
\int_{G}\zeta^q\left[\left(\mu_1u^p+\beta
uv\right)^2+\left(\mu_2v^p+\frac{\beta}{2}
u^2\right)^2\right]dxdt&\leq C\int_{G}\zeta^{q-\frac{4p}{p-1}}
\left(|\nabla\zeta|^2+|\Delta\zeta|+|\zeta_t|\right)^{\frac{2p}{p-1}}dxdt.
\end{split}
\end{equation}
\end{lemma}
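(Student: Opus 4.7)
The plan is to test the two equations in \eqref{auto-1} against $\zeta^q F_1$ and $\zeta^q F_2$, where $F_1:=\mu_1 u^p+\beta uv$ and $F_2:=\mu_2 v^p+\tfrac{\beta}{2}u^2$ are the two right-hand sides. Adding the resulting identities yields
\[
\int_G \zeta^q(F_1^2+F_2^2)\,dx\,dt = \int_G \zeta^q(F_1 u_t + F_2 v_t)\,dx\,dt - \int_G \zeta^q(F_1\Delta u + F_2\Delta v)\,dx\,dt.
\]
The decisive algebraic observation on the time side is the cancellation $\beta uv\,u_t + \tfrac{\beta}{2}u^2 v_t = \tfrac{\beta}{2}(u^2 v)_t$; combined with $\mu_i u_i^p(u_i)_t = \tfrac{\mu_i}{p+1}(u_i^{p+1})_t$ for $u_i\in\{u,v\}$, integration by parts in $t$ (using $\zeta\in\mathcal{D}(G)$) turns the time piece into a purely boundary contribution involving $(\zeta^q)_t$ acting on $u^{p+1}$, $v^{p+1}$ and $u^2 v$. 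Integrating by parts in $x$ on the spatial piece and computing $\nabla F_i$ explicitly produces a ``good'' gradient piece
\[
\mu_1 p\int_G\zeta^q u^{p-1}|\nabla u|^2 + \mu_2 p\int_G\zeta^q v^{p-1}|\nabla v|^2 + \beta\int_G\zeta^q v|\nabla u|^2 + 2\beta\int_G\zeta^q u\,\nabla u\cdot\nabla v,
\]
plus an error $\int(F_1\nabla u + F_2\nabla v)\cdot\nabla\zeta^q$. The cross term in the good piece is controlled by Cauchy--Schwarz, $|2\beta u\,\nabla u\cdot\nabla v|\le\beta v|\nabla u|^2 + \beta u^2 v^{-1}|\nabla v|^2$, so all four resulting quantities are precisely those controlled on the left-hand side of Lemma~\ref{lem-2.2} and are therefore bounded by its right-hand side.

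It remains to dispose of every leftover boundary-type integral. Each has the shape $\zeta^{q-k}\Phi(u,v)\Psi(\zeta)$ with $k\in\{1,2,4\}$, where $\Phi$ is a $(u,v)$-monomial of ``weight'' either $p+1$ (namely $u^{p+1},\,v^{p+1},\,u^2 v$) or $2$ (namely $u^2,\,v^2$), and $\Psi$ is a product of derivatives of $\zeta$. One uses the elementary bounds $u^{2p}\le F_1^2/\mu_1^2$, $v^{2p}\le F_2^2/\mu_2^2$ and $u^2 v\le C F_2^{(p+1)/p}$ (the last from $u^2\le 2F_2/\beta$ together with $v\le(F_2/\mu_2)^{1/p}$), and then applies Young's inequality with conjugate pair $(\tfrac{2p}{p+1},\tfrac{2p}{p-1})$ on the weight-$(p+1)$ terms and $(p,\tfrac{p}{p-1})$ on the weight-$2$ terms. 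A short arithmetic shows that in both cases the residual $\zeta$-exponent collapses to $q-\tfrac{4p}{p-1}$ and the $\zeta$-derivative side receives the power $\tfrac{2p}{p-1}$, producing
\[
\zeta^{q-k}\Phi(u,v)\Psi(\zeta)\le\varepsilon\,\zeta^q(F_1^2+F_2^2) + C_\varepsilon\,\zeta^{q-\frac{4p}{p-1}}\bigl(|\nabla\zeta|^2+|\Delta\zeta|+|\zeta_t|\bigr)^{\frac{2p}{p-1}},
\]
where one exploits $\zeta\le 1$ to absorb excess $\zeta$-powers and the trivial bound $|\zeta_t|^{2p/(p-1)}\le\bigl(|\nabla\zeta|^2+|\Delta\zeta|+|\zeta_t|\bigr)^{2p/(p-1)}$ on the derivative side. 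The same Young-type procedure applied to each of the three pieces on the right-hand side of Lemma~\ref{lem-2.2} yields the same estimate; choosing $\varepsilon$ small enough to absorb the $\zeta^q(F_1^2+F_2^2)$ contribution into the left-hand side then produces \eqref{b-26}.

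The main obstacle is the exponent bookkeeping across the several Young applications: one must verify that the residual $\zeta$-power always collapses to $q-\tfrac{4p}{p-1}$ (or, for a few boundary terms with $k=1$, to the larger $q-\tfrac{2p}{p-1}$, which is dominated using $\zeta\le 1$), and this is precisely why the hypothesis $q>\tfrac{4p}{p-1}$ is imposed, ensuring that the residual exponent is strictly positive. A secondary delicate point is the mixed monomial $u^2 v$, whose sharp control uses $F_2$ rather than $F_1$; a naive Young estimate in $F_1$ alone would miss the sharp exponent $\tfrac{2p}{p-1}$ on the $\zeta$-derivative factor.
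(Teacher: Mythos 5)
Your proof is correct and takes essentially the same route as the paper's: test the two equations against $\zeta^q F_1$ and $\zeta^q F_2$ (the paper's \eqref{b-27}--\eqref{b-28}), exploit the cancellation $\beta uv\,u_t+\tfrac{\beta}{2}u^2 v_t=\tfrac{\beta}{2}(u^2 v)_t$, control the resulting gradient integrals via Lemma~\ref{lem-2.2}, and absorb the Young residuals into the left-hand side. The only cosmetic difference is that you convert the monomials $u^{p+1},v^{p+1},u^2v$ into $F_i$-powers before applying Young, whereas the paper applies Young first (arriving at $\varepsilon\int\zeta^q(u^{2p}+v^{2p}+\beta^2u^2v^2)$ in \eqref{b-29}) and then uses $u^{2p}\le F_1^2/\mu_1^2$, $v^{2p}\le F_2^2/\mu_2^2$, $\beta^2u^2v^2\le F_1^2$ to absorb; the exponent bookkeeping and the conclusion are identical.
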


\begin{proof}
Using the equation of $u$ in \eqref{auto-1} we get
\begin{equation}
\label{b-27}
\begin{split}
\int_{\Omega}\zeta^q\left(\mu_1u^p+\beta
uv\right)^2dxdt&=\int_{\Omega}\zeta^q\left(\mu_1u^p+\beta
uv\right)(u_t-\Delta u)dxdt\\
&=\frac{dg_u}{dt}-K_u+V_u+\mu_1p\int_{\Omega}\zeta^qu^{p-1}|\nabla
u|^2dxdt+\frac{dh}{dt}-H\\
&\quad+\beta\int_{\Omega}\zeta^qv|\nabla
u|^2dxdt+Q+S_u-\frac{\beta}{2}\int_{\Omega}\zeta^qu^2v_tdxdt.
\end{split}
\end{equation}
Analogously, we can get
\begin{equation}
\label{b-28}
\begin{split}
\int_{\Omega}\zeta^q\left(\mu_2v^p+\beta/2
u^2\right)^2dxdt&=\int_{\Omega}\zeta^q\left(\mu_2v^p+\beta/2
u^2\right)(v_t-\Delta v)dxdt\\
&=\frac{dg_v}{dt}-K_v+V_v+\mu_2p\int_{\Omega}\zeta^qv^{p-1}|\nabla
v|^2dxdt+\frac{\beta}{2}\int_{\Omega}\zeta^qu^2v_tdxdt+Q+S_v.
\end{split}
\end{equation}
Following the same argument of deriving \eqref{b-25}, we get from Lemma \ref{lem-2.2} and
\eqref{b-27}-\eqref{b-28} that
\begin{equation}
\label{b-29}
\begin{split}
&\int_{G}\zeta^q\left[\left(\mu_1u^p+\beta
uv\right)^2+\left(\mu_2v^p+\frac{\beta}{2}
u^2\right)^2\right]dxdt\\
&\leq
C\left(\int_{G}\zeta^{q-2}(|\zeta_t|+|\nabla\zeta|^2)(u^{p+1}+v^{p+1})dxdt+\beta\int_{G}\zeta^{q-2}(|\zeta_t|+|\nabla\zeta|^2)u^2vdxdt\right)\\
&\quad+C\int_{G}\zeta^{q-4}(|\Delta\zeta|^2+|\nabla\zeta|^4+|\zeta_t|^2)(u^2+v^2)dxdt\\
&\leq \varepsilon\int_{G}\zeta^q(u^{2p}+v^{2p}+\beta^2u^2v^2)dxdt
+C(\varepsilon)\int_{G}\zeta^{q-\frac{4p}{p-1}}(|\zeta_t|+|\Delta\zeta|+|\nabla\zeta|^2)^{\frac{2p}{p-1}}dxdt,
\end{split}
\end{equation}
where we used the Young's inequality. We notice that the first term on the right-hand side of \eqref{b-29} can be controlled by its left-hand side, then we get \eqref{b-26} and the result is proved.
\end{proof}

Next we prove the Liouville type results for system \eqref{auto-1}

\begin{proof}[Proof of Theorem \ref{th1.1}-(i).]
For any solution $(u,v)$ of \eqref{auto-1} and positive $R$, we define
\begin{equation}
\label{b-30}
(u^R, v^R)=\left(R^{\frac{2}{p-1}}u(Rx, R^2t),
R^{\frac{2}{p-1}}v(Rx, R^2t)\right).
\end{equation}
Then it is easy to see that
\begin{equation}
\label{b-31}
\begin{cases}
\ds u_t^R-\Delta
u^R=\mu_1(u^R)^p+\tilde{\beta}_Ru^Rv^R,\ &x\in\mathbb{R}^{N},\ t\in\mathbb{R},\\
\ds v_t^R-\Delta v^R=\mu_2(v^R)^p+\frac{\tilde{\beta}_R}{2}
(u^R)^2,\ &x\in\mathbb{R}^{N},\ t\in\mathbb{R},
\end{cases}
\end{equation}
where $\tilde{\beta}_R=\beta R^{\frac{2(p-2)}{p-1}}$. By Lemma \ref{lem-2.3} we get that for any $\varepsilon>0$ small, there exists $C(\varepsilon)>0$ such that
\begin{equation}
\label{bj32}
\begin{split}
&\int_{G}\left[\left(\mu_1(u^R)^p+\tilde{\beta}_R
u^Rv^R\right)^2+\left(\mu_2(v^R)^p+\frac{\tilde{\beta}_R}{2}
(u^R)^2\right)^2\right]dxdt\\
&\leq
C\tilde{\beta}_R\int_{G}\zeta^{q-2}(|\zeta_t|+|\nabla\zeta|^2)(u^R)^2v^Rdxdt+
C\int_{G}\zeta^{q-2}(|\zeta_t|+|\nabla\zeta|^2)((u^R)^{p+1}+(v^R)^{p+1})dxdt\\
&\quad+C\int_{G}\zeta^{q-4}\left(|\Delta\zeta|^2+|\nabla\zeta|^4+|\zeta_t|^2\right)((u^R)^2+(v^R)^2)dxdt\\
&\leq\varepsilon\int_{G}\zeta^q((u^R)^{2p}+(v^{2R})^{2p}+(u^R)^2(v^R)^2)dxdt+C(\varepsilon)\int_{G}\zeta^{q-\frac{4p}{p-1}}\left(|\Delta\zeta|+|\nabla\zeta|^2+|\zeta_t|\right)^{\frac{2p}{p-1}}dxdt,
\end{split}
\end{equation}
where $q>\max \frac{4p}{p-1} $, and $C(\varepsilon)$ is independent of $R$. As Lemma \ref{lem-2.3} we see that the first term can be absorbed into the left-hand side, then
\begin{equation}
\label{bj33}
\begin{split}
&\int_{G}\left[\left(\mu_1(u^R)^p+\tilde{\beta}_R
u^Rv^R\right)^2+\left(\mu_2(v^R)^p+\frac{\tilde{\beta}_R}{2}
(u^R)^2\right)^2\right]dxdt\\
&\leq C(\varepsilon)\int_{G}\zeta^{q-\frac{4p}{p-1}}\left(|\Delta\zeta|+|\nabla\zeta|^2+|\zeta_t|\right)^{\frac{2p}{p-1}}dxdt\leq C.
\end{split}
\end{equation}
It implies that
\begin{equation}
\label{b-33}
\begin{split}
&\int_{-R^2}^{R^2}\int_{|y|<R}\left[\left(\mu_1u^p+\beta
uv\right)^2+\left(\mu_2v^p+\frac{\beta}{2} u^2\right)^2\right]dydt\\
&=R^{N+2-\frac{4p}{p-1}}\int_{-1}^{1}\int_{|x|<1}\left[\left(\mu_1(u^R)^p+\tilde{\beta}_R
u^Rv^R\right)^2+\left(\mu_2(v^R)^p+\frac{\tilde{\beta}_R}{2}
(u^R)^2\right)^2\right]dxdt\\
&\leq CR^{N+2-\frac{4p}{p-1}}.
\end{split}
\end{equation}
Since $p<p_B(N)\leq p_S(N)$, by letting $R\to\infty$ we get
\begin{equation}
\int_{-\infty}^\infty\int_{\mathbb{R}^N}
\left[(\mu_1u^p+\beta uv)^2+(\mu_2v^p+\frac{\beta}{2}u^2)^2\right]
dxdt=0.
\end{equation}
It leads to $u=v\equiv 0.$ We finish the proof.
\end{proof}

Next we shall use the Liouville type result to derive the a-priori estimate for the solutions to \eqref{auto-1}. The proof is based on the following doubling lemma, which was introduced in \cite{Souplet-Duke-2007,Serrin-Zou-2002-Acta}.

\begin{lemma}
\label{lem-2.4}
Let $(X,\rho)$ be the complete metric space, and $\emptyset\neq
\Lambda\subset\Sigma\subset X$ with $\Sigma$ closed. Set
$\Gamma=\Sigma\setminus\Lambda$. Let $F:
\Lambda\rightarrow(0,\infty)$ be a map which is bounded on any compact subsets of $\Lambda$, and fix a positive constant $k>0$. If $y\in\Lambda$ satisfies that
\begin{equation}
\label{b-39}
F(y)dist(y, \Gamma)>2k,
\end{equation}
then there exists $x\in\Lambda$ such that
\begin{equation}
\label{b-40}
F(x)dist(x, \Gamma)>2k,\quad F(x)\geq F(y)
\end{equation}
and
\begin{equation}
\label{b-41}
F(z)\leq2 F(x)\quad\forall
z\in\Lambda\cap\overline{B_X\left(x, kF^{-1}(x)\right)}.
\end{equation}
\end{lemma}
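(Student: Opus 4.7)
The plan is to argue by contradiction, producing through iteration a sequence $(x_n) \subset \Lambda$ along which $F$ grows geometrically, and then to rule out escape from $\Lambda$ so that the hypothesis that $F$ is bounded on compact subsets of $\Lambda$ yields the desired contradiction.

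First, I would assume the conclusion fails and set $x_0 := y$. At each step $n \geq 0$, supposing inductively that $x_n \in \Lambda$ satisfies $F(x_n)\,\mathrm{dist}(x_n,\Gamma) > 2k$ and $F(x_n) \geq F(y)$, the failure of the conclusion at $x_n$ produces some $z \in \Lambda \cap \overline{B_X(x_n, kF^{-1}(x_n))}$ with $F(z) > 2F(x_n)$; I take $x_{n+1} := z$. By induction $F(x_n) > 2^n F(y)$, which forces $\rho(x_{n+1}, x_n) \leq kF^{-1}(x_n) < k\,2^{-n}F^{-1}(y)$. The key inequality $F(x_{n+1})\,\mathrm{dist}(x_{n+1},\Gamma) > 2k$ propagates because
\[
F(x_{n+1})\,\mathrm{dist}(x_{n+1},\Gamma) \geq 2F(x_n)\left[\mathrm{dist}(x_n,\Gamma) - kF^{-1}(x_n)\right] > 4k - 2k = 2k.
\]

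Next, the telescoping estimate $\rho(x_0, x_n) < \sum_{j=0}^{n-1} k\,2^{-j}F^{-1}(y) < 2k F^{-1}(y)$ shows $(x_n)$ is Cauchy, so by completeness of $X$ it converges to some $x^* \in X$ with $\rho(y, x^*) \leq 2kF^{-1}(y) < \mathrm{dist}(y,\Gamma)$, the last inequality coming directly from the hypothesis $F(y)\,\mathrm{dist}(y,\Gamma) > 2k$. This strict bound immediately rules out $x^* \in \Gamma$. On the other hand, every $x_n$ lies in $\Lambda \subset \Sigma$ and $\Sigma$ is closed, so $x^* \in \Sigma$. Therefore $x^* \in \Sigma \setminus \Gamma = \Lambda$.

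Finally, the set $K := \{x_n : n \geq 0\} \cup \{x^*\}$ is a compact subset of $\Lambda$, being the image of a convergent sequence together with its limit. By hypothesis, $F$ is bounded on $K$, contradicting $F(x_n) > 2^n F(y) \to \infty$. The delicate step I expect to be the main obstacle is the inductive propagation of the condition $F(x_n)\,\mathrm{dist}(x_n,\Gamma) > 2k$: this must hold not only to apply the failure hypothesis at the next stage but also, in the limit, to ensure via the strict bound $\rho(y,x^*) < \mathrm{dist}(y,\Gamma)$ that the limit point stays inside $\Lambda$ rather than escaping to $\Gamma$, which is what unlocks the compactness argument.
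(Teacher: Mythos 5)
Your proof is correct, and it is in fact the standard argument for the doubling lemma. The paper itself does not supply a proof of Lemma \ref{lem-2.4}; it simply cites the result (due to Pol\'a\u{c}ik--Quittner--Souplet and, in the elliptic quasilinear setting, Serrin--Zou). Your iterative construction --- assume failure, double $F$ at each step within a shrinking ball of radius $kF^{-1}(x_n)$, verify inductively that the condition $F(x_n)\,\mathrm{dist}(x_n,\Gamma)>2k$ propagates via the telescoping bound, use completeness to extract a limit $x^*$, rule out $x^*\in\Gamma$ from the strict inequality $\rho(y,x^*)\le 2kF^{-1}(y)<\mathrm{dist}(y,\Gamma)$, conclude $x^*\in\Sigma\setminus\Gamma=\Lambda$, and then contradict boundedness of $F$ on the compact set $\{x_n\}\cup\{x^*\}\subset\Lambda$ --- is precisely the proof given in those references. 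All the inequalities you write check out: in particular the inductive step
\[
F(x_{n+1})\,\mathrm{dist}(x_{n+1},\Gamma)\ge 2F(x_n)\bigl[\mathrm{dist}(x_n,\Gamma)-kF^{-1}(x_n)\bigr]=2F(x_n)\,\mathrm{dist}(x_n,\Gamma)-2k>4k-2k=2k
\]
is exactly right, and the geometric decay $\rho(x_{j},x_{j+1})\le kF^{-1}(x_j)<k\,2^{-j}F^{-1}(y)$ gives $\rho(y,x_n)<2kF^{-1}(y)$ uniformly, which is what you need both for the Cauchy property and for keeping the limit inside $\Lambda$. One could also note the edge case $\Gamma=\emptyset$ (so $\mathrm{dist}(\cdot,\Gamma)=+\infty$): the argument degenerates gracefully since condition \eqref{b-40} becomes vacuous and $x^*\notin\Gamma$ is automatic.
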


Now we are going to give the proof of Theorem \ref{th1.1}-(ii).
\begin{proof}[Proof of Theorem \ref{th1.1}-(ii).]
We prove the conclusion by contradiction. Suppose that the estimate \eqref{a9} fails. Then there exist sequences $D_k=\Omega_k\times(0,T_k)$, $(u_k, v_k)$ and $(y_k, \tau_k)\in D_k$ such that
\begin{equation}
\label{b-42}
M_k(y_k,\tau_k)=(u_k+v_k)^{\frac{p-1}{2}}(y_k,\tau_k)>2kd_P^{-1}((y_k, \tau_k), \partial D_k),
\end{equation}
where $M_k(x)=(u_k+v_k)^{\frac{p-1}{2}}(x)$ and $d_P((x,t),(\tilde x,\tilde t))=|x-\tilde x|+|t-\tilde t|^{1/2}$ denotes the parabolic distance. From Lemma \ref{lem-2.4}, we infer that there exists $(x_k, t_k)\in
D_k$ such that
\begin{equation}
\label{b-43}
\begin{split}
&M_k(x_k, t_k)\geq M_k(y_k, \tau_k),\quad M_k(x_k,
t_k)>2kd_P^{-1}((x_k, t_k), \partial D_k),\\
&M_k(x, t)\leq2M_k(x_k, t_k)\quad \text{whenever}\quad d_P((x,t), (x_k,t_k))\leq kM_k^{-1}(x_k,t_k).
\end{split}
\end{equation}
In the following, we divide our discussion into two cases:
\medskip

\noindent Case 1. If $p=2$, we set
\begin{equation}
\label{b-44}
(\hat{u}_k(y,s),
\hat{v}_k(y,s))=\left(\lambda_k^{2}u_k(x_k+\lambda_ky,
t_k+\lambda_k^2s), \lambda_k^{2}v_k(x_k+\lambda_ky,
t_k+\lambda_k^2s)\right),
\end{equation}
where
\begin{equation}
\label{b-45}
\lambda_k=M_k^{-1}(x_k,t_k)\quad\text{and}\quad (y,
s)\in\hat{D}_k=\{y\in\mathbb{R}^{N}:
|y|<k/2\}\times\left(-\frac{k^2}{4}, \frac{k^2}{4}\right).
\end{equation}
Then we see that $(\hat{u}_k, \hat{v}_k)$ satisfies
\begin{equation}
\label{b-46}
\begin{cases}
\ds \partial_t\hat{u}_k-\Delta
\hat{u}_k=\mu_{1}\hat{u}_k^2+\beta\hat{u}_k\hat{v}_k,\ &(y,s)\in\hat{D}_k,\\
\ds \partial_t\hat{v}_k-\Delta
\hat{v}_k=\mu_{1}\hat{v}_k^2+\frac{\beta}{2}\hat{u}_k^2,\
&(y,s)\in\hat{D}_k,
\end{cases}
\end{equation}
Moreover,
\begin{equation}
\label{b-47}
\hat{u}_k(0,0)+\hat{v}_k(0,0)=\lambda_k^{2}M_k^2=1
\end{equation}
and
\begin{equation}
\label{b-48}
\hat{u}_k(y,s)+\hat{v}_k(y,s)\leq4\quad \text{for}\ (y,
s)\in\hat{D}_k.
\end{equation}
By the standard regularity estimates(see \cite[Theorem
48.1]{Souplet-Book-2019}), we deduce that there exists a subsequence of $(\hat{u}_k, \hat{v}_k)$ converging in
$C_{\mathrm{loc}}^2(\mathbb{R}^{N}\times\mathbb{R})$ to a classical
nonnegative solution $(\hat{u},\hat{v})$ of \eqref{auto-1} with
$p=2$. Furthermore, we infer from \eqref{b-47} that
$\hat{u}(0,0)+\hat{v}(0,0)=1$. Hence $(\hat{u},\hat{v})$ is nontrivial. However, it contradicts Theorem \ref{th1.1}-$(i)$ and the estimate \eqref{a9} is proved in this case.

\noindent Case 2. If $p>2$, the inequality \eqref{b-42} is replaced by
\begin{equation}
\label{b-49}
M_k>2k\left(1+d_P^{-1}((y_k, \tau_k), \partial D_k)\right).
\end{equation}
We set
\begin{equation}
\label{b-50}
(\hat{u}_k(y,s),
\hat{v}_k(y,s))=\left(\lambda_k^{\frac{2}{p-1}}u_k(x_k+\lambda_ky,
t_k+\lambda_k^2s), \lambda_k^{\frac{2}{p-1}}v_k(x_k+\lambda_ky,
t_k+\lambda_k^2s)\right),
\end{equation}
where $\lambda_k=M_k^{-1}$ as before and $(y,
s)\in\hat{D}_k=\{y\in\mathbb{R}^{N}:
|y|<k/2\}\times\left(-\frac{k^2}{4}, \frac{k^2}{4}\right)$. A direct
computation shows that $(\hat{u}_k,\hat{v}_k)$ satisfies
\eqref{b-31} with $\tilde{\beta}_R=\beta
\lambda_k^{\frac{2(p-2)}{p-1}}$, and $(\hat{u}_k,\hat{v}_k)$ verifies \eqref{b-47} and
\eqref{b-48}. Since $\lambda_k\rightarrow0$ as
$k\rightarrow\infty$, we get that some subsequence of $(\hat{u}_k,
\hat{v}_k)$ converges in $C_{\mathrm{loc}}^2(\mathbb{R}^{N}\times\mathbb{R})$
to a classical nonnegative solution $(\hat{u},\hat{v})$ of
\eqref{auto-1} with $\beta=0$. This contradicts \cite[Theorem A]{Souplet-IUMJ-2007} and we finish the whole proof.
\end{proof}

\subsection{The Liouville results in half space}

In this subsection, we focus on the Liouville type results of \eqref{auto-10} in half-space $\mathbb{R}_+^N$, i.e., we shall prove Theorem \ref{th1.2}. To achieve this goal we need the following monotonicity result, which is due to \cite[Theorem 2.3]{Souplet-2016-Math-Ann}.

\begin{lemma}
\label{lem-3.1}
Let $N\geq1$ and $k\geq2$ and consider the following system
\begin{equation}
\label{b-52}
\begin{cases}
\frac{\partial u_i}{\partial t}-\Delta u_i=f_i(u_1,\cdot\cdot\cdot,u_k),\ (x, t)\in\mathbb{R}_{+}^N\times\mathbb{R},\
&i=1,\cdot\cdot\cdot, k,\\
u_i=0\ (x, t)\in\partial\mathbb{R}_{+}^N\times\mathbb{R},\
&i=1,\cdot\cdot\cdot, k.
\end{cases}
\end{equation}
Suppose that $f_i: [0,\infty)^k\rightarrow\mathbb{R}$ are
$C^1$-functions satisfying
\begin{itemize}
\item [($L_1$)] $f_i(0,\cdot\cdot\cdot,0)=0$ and $\sum_{j=1}^k\frac{\partial f_i}{\partial u_j}(0,\cdot\cdot\cdot,0)\leq0$ for all $i$;

\item [($L_2$)] $\frac{\partial f_i}{\partial u_j}(u_1,\cdot\cdot\cdot,u_k)\geq0$ for all $(u_1,\cdot\cdot\cdot,u_k)\in[0,\infty)^k$ and all $i\neq j$ ~($i, j=1,\cdot\cdot\cdot,k$);

\item [($L_3$)] any nontrivial nonnegative bounded solution of
\eqref{b-52} is positive in $\mathbb{R}_{+}^N\times\mathbb{R}$.
\end{itemize}
Then any nontrivial nonnegative bounded solution of \eqref{b-52} is increasing in $x_1$:
\begin{equation}
\label{b-53}
\frac{\partial u_i}{\partial x_1}(x,t)>0,\quad\forall
(x,t)\in\mathbb{R}_+^N\times \mathbb{R},\ i=1, \cdot\cdot\cdot,k.
\end{equation}
\end{lemma}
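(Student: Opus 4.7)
My plan is to prove this by the moving planes method in the $x_1$-direction, adapted to bounded ancient solutions of cooperative parabolic systems on a half-space. For each $\lambda>0$, let $x^\lambda=(2\lambda-x_1,x_2,\ldots,x_N)$, set $\Sigma_\lambda=\{0<x_1<\lambda\}$, and consider the reflected differences $w_i^\lambda(x,t):=u_i(x^\lambda,t)-u_i(x,t)$. Since the Laplacian commutes with the reflection and $f_i$ depends only on $u$, the vector $w^\lambda$ satisfies the linear system
$$\partial_t w_i^\lambda-\Delta w_i^\lambda=\sum_{j=1}^k c_{ij}^\lambda(x,t)\,w_j^\lambda\quad\text{on }\Sigma_\lambda\times\mathbb{R},$$
where $c_{ij}^\lambda=\int_0^1 \partial_{u_j}f_i\bigl(u+s(u^\lambda-u)\bigr)\,ds$. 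Hypothesis $(L_2)$ gives $c_{ij}^\lambda\geq 0$ for $i\neq j$, and the boundedness of $u$ together with $f_i\in C^1$ gives $c_{ij}^\lambda\in L^\infty$. The boundary values are $w_i^\lambda(\lambda,x',t)=0$ on the reflection plane and $w_i^\lambda(0,x',t)=u_i(2\lambda,x',t)\geq 0$ on $\partial\mathbb{R}_+^{N}$.

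The first key step is to start the moving planes: I would show that $w_i^\lambda\geq 0$ in $\Sigma_\lambda\times\mathbb{R}$ for every sufficiently small $\lambda>0$ by a narrow-slab maximum principle for cooperative parabolic systems. Hypothesis $(L_1)$ enters decisively here, because $f(0)=0$ and $\sum_j\partial_{u_j}f_i(0)\leq 0$ imply that for small $\lambda$ the matrix $(c_{ij}^\lambda)$ is a small perturbation of $Df(0)$ on a thin slab, so the principal eigenvalue of the corresponding linear operator on $\Sigma_\lambda$ is strictly positive; equivalently, one can construct a positive supersolution of the form $(\phi(x_1),\ldots,\phi(x_1))$ for a suitable concave $\phi$. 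Because we have no initial time, the negation is handled by a time-translation/compactness argument: if $\inf_i\inf w_i^\lambda<0$, pick a minimizing sequence $(x_k,t_k)$, apply interior parabolic $C^{2,1}$ estimates to the translates $w^\lambda(\,\cdot\,,\,\cdot\,+t_k)$, and pass to a bounded limit $W$ on $\Sigma_\lambda\times\mathbb{R}$ solving the same cooperative system and attaining a negative minimum, which contradicts the strong maximum principle against the nonnegative boundary data.

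The second key step is continuation. Let $\lambda^{\ast}:=\sup\{\lambda>0:w_i^\mu\geq 0 \text{ in } \Sigma_\mu\times\mathbb{R} \text{ for all } i \text{ and all } \mu\in(0,\lambda]\}$ and aim at $\lambda^{\ast}=\infty$. If $\lambda^{\ast}<\infty$, then $w_i^{\lambda^{\ast}}\geq 0$ by continuity, and hypothesis $(L_3)$, which guarantees $u_i>0$ in $\mathbb{R}_+^{N}\times\mathbb{R}$, places us in the regime where the cooperative strong maximum principle yields for each $i$ either $w_i^{\lambda^{\ast}}\equiv 0$ or $w_i^{\lambda^{\ast}}>0$ in $\Sigma_{\lambda^{\ast}}\times\mathbb{R}$. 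The first alternative is excluded by $u_i(0,x',t)=0<u_i(2\lambda^{\ast},x',t)$. With $w_i^{\lambda^{\ast}}>0$, a Hopf-type estimate plus the narrow-slab argument of the first step applied on a thin strip $\Sigma_{\lambda^{\ast}+\varepsilon}\setminus\Sigma_{\lambda^{\ast}-\delta}$ (where $w^{\lambda^{\ast}+\varepsilon}$ already has a positive lower bound on $\{x_1=\lambda^{\ast}-\delta\}$, obtained via time translation and compactness from the strict positivity of $w^{\lambda^\ast}$) produces $w_i^{\lambda^{\ast}+\varepsilon}\geq 0$ for some $\varepsilon>0$, contradicting the definition of $\lambda^{\ast}$.

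Once $w_i^\lambda\geq 0$ for all $\lambda>0$, the midpoint choice $\lambda=(x_1+\tilde x_1)/2$ translates into $u_i(x_1,x',t)\leq u_i(\tilde x_1,x',t)$ whenever $0<x_1<\tilde x_1$, so $U_i:=\partial u_i/\partial x_1\geq 0$. Differentiating \eqref{b-52} shows that $U:=(U_1,\ldots,U_k)$ solves a cooperative linear parabolic system with $L^\infty$ coefficients, whose off-diagonal entries $\partial_{u_j}f_i(u)$ are nonnegative by $(L_2)$. The strong maximum principle then gives, for each $i$, either $U_i>0$ or $U_i\equiv 0$ throughout $\mathbb{R}_+^{N}\times\mathbb{R}$; the second case would force $u_i$ to be independent of $x_1$, which together with $u_i|_{x_1=0}=0$ contradicts nontriviality and the positivity granted by $(L_3)$. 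The main obstacle I anticipate is precisely the absence of an initial time, which rules out the standard parabolic maximum principle and forces the time-translation/compactness passage to a limit at every place where a classical starting step or strong maximum principle conclusion would otherwise be automatic.
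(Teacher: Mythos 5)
The paper does not actually prove this lemma: it quotes it as \cite[Theorem 2.3]{Souplet-2016-Math-Ann} and cites Phan--Souplet for the proof, so there is no in-paper argument to compare against. Your overall moving-plane template (reflected differences $w_i^\lambda$, narrow-slab start exploiting $(L_1)$--$(L_2)$, continuation to $\lambda^\ast=\infty$, then differentiation plus the strong maximum principle for cooperative systems) is the right kind of argument for this statement, and your handling of the absence of an initial time by time-translation and compactness is appropriate in principle.

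There is, however, a genuine gap in the continuation step. You assert that strict positivity $w_i^{\lambda^\ast}>0$ on $\Sigma_{\lambda^\ast}\times\mathbb{R}$ yields, ``via time translation and compactness,'' a uniform positive lower bound for $w^{\lambda^\ast}$ on the noncompact slice $\{x_1=\lambda^\ast-\delta\}\times\mathbb{R}^{N-1}\times\mathbb{R}$. That does not follow. If you translate in $(x',t)$ along a minimizing sequence on that slice and pass to a $C^{2,1}_{\mathrm{loc}}$ limit, you obtain a bounded nonnegative solution $\bar u$ of the same system whose reflected difference $\bar w_i^{\lambda^\ast}$ vanishes at an interior point, hence $\bar w_i^{\lambda^\ast}\equiv 0$ by the strong maximum principle; but if $\bar u\equiv 0$ this is no contradiction at all -- hypothesis $(L_3)$ only speaks to \emph{nontrivial} limit solutions, and strict positivity of $w^{\lambda^\ast}$ for the original $u$ is perfectly compatible with the translated limit being trivial. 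So the claimed lower bound is unsupported, and the attempt to slide past $\lambda^\ast$ stalls precisely where escape of the touching points to ``infinity where $u$ is small'' must be excluded. The missing ingredient is a treatment of this degenerate case: one must argue that along such an escaping sequence the linearization matrix $c_{ij}^\lambda$ tends to $Df(0)$, whose row-sum and sign structure from $(L_1)$--$(L_2)$ support a local maximum-principle argument that forbids a negative minimum of $w^{\lambda}$ in a neighborhood of the escaping points (or otherwise rule out the trivial limit). In your write-up $(L_1)$ is used only to launch the plane, but it is also needed to close the continuation; without that, the argument is incomplete.
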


Based on the above lemma, we derive the following result.

\begin{lemma}
\label{lem-3.2}
Assume that $1<p<\frac{N(N+2)}{(N-1)^2}(N\leq5)$. Then any
nontrivial nonnegative bounded solution of \eqref{auto-10} is increasing in $x_1$:
\begin{equation}
\label{b-54}
\frac{\partial u}{\partial x_1}(x,t)>0\quad\text{and}\quad
\frac{\partial v}{\partial x_1}(x,t)>0\quad\forall
(x,t)\in\mathbb{R}_+^N\times \mathbb{R}.
\end{equation}
\end{lemma}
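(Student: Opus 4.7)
The plan is to apply Lemma \ref{lem-3.1} with $k=2$, taking $u_1=u$, $u_2=v$ and nonlinearities
\[
f_1(u,v)=\mu_1 u^p+\beta uv,\qquad f_2(u,v)=\mu_2 v^p+\tfrac{\beta}{2}u^2,
\]
so that \eqref{b-54} will follow immediately once the three hypotheses $(L_1)$, $(L_2)$, $(L_3)$ of that lemma are verified for this choice of $f_1,f_2$.

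I expect $(L_1)$ and $(L_2)$ to be essentially mechanical. Both $f_i$ vanish at the origin and, since $p>1$, every first partial derivative of $f_i$ at $(0,0)$ equals zero, so $\sum_j\partial_{u_j}f_i(0,0)=0\le 0$, giving $(L_1)$. For $(L_2)$, the off-diagonal partials $\partial_v f_1=\partial_u f_2=\beta u$ are manifestly nonnegative on $[0,\infty)^2$.

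The substantive step is $(L_3)$: I must show that every nontrivial nonnegative bounded solution $(u,v)$ of \eqref{auto-10} is strictly positive on $\mathbb{R}_+^N\times\mathbb{R}$. First I would rewrite the first equation in linear form $u_t-\Delta u-c_1 u=0$ with bounded potential $c_1=\mu_1 u^{p-1}+\beta v$ (bounded because $(u,v)$ is bounded), and similarly for $v$. The parabolic strong maximum principle then yields the standard dichotomy: either $u\equiv 0$ or $u>0$ throughout $\mathbb{R}_+^N\times\mathbb{R}$ (if $u(x_0,t_0)=0$ at an interior point, the strong maximum principle forces $u\equiv 0$ on $\mathbb{R}_+^N\times(-\infty,t_0]$, and uniqueness of the bounded Cauchy--Dirichlet problem for the resulting linear equation propagates the vanishing to $t>t_0$); the same dichotomy holds for $v$.

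To finish $(L_3)$ I rule out the two semi-trivial configurations. If $v\equiv 0$, the second equation of \eqref{auto-10} reduces to $\tfrac{\beta}{2}u^2\equiv 0$, forcing $u\equiv 0$ and contradicting nontriviality. If $u\equiv 0$ with $v\not\equiv 0$, then $v$ is a bounded nonnegative solution of the scalar half-space Dirichlet problem $v_t-\Delta v=\mu_2 v^p$ on $\mathbb{R}_+^N\times\mathbb{R}$, and in the subcritical range $1<p<p_B(N)\le p_S(N)$ the corresponding scalar parabolic half-space Liouville theorem (see, e.g.,~\cite{Souplet-IUMJ-2007}) yields $v\equiv 0$, again a contradiction. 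Hence both $u,v>0$ throughout $\mathbb{R}_+^N\times\mathbb{R}$, establishing $(L_3)$, and Lemma \ref{lem-3.1} delivers \eqref{b-54}. I expect the main obstacle to be precisely $(L_3)$: the strong-maximum-principle step is routine once boundedness is exploited, but eliminating the configuration $u\equiv 0$, $v\not\equiv 0$ genuinely requires appealing to a scalar half-space Liouville theorem, whose admissible exponent range is exactly what pins down the restriction $p<p_B(N)$ in the statement.
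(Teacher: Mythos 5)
Your proposal is correct and follows essentially the same route as the paper: verify $(L_1)$–$(L_2)$ mechanically, establish $(L_3)$ via the strong maximum principle plus elimination of semi-trivial configurations, and then invoke Lemma~\ref{lem-3.1}. The only cosmetic difference is that the paper cites the whole-space scalar Liouville theorem of Bidaut--V\'eron when ruling out $u\equiv 0$, $v\not\equiv 0$, whereas you cite the half-space scalar result in \cite{Souplet-IUMJ-2007}, which is arguably the cleaner reference for a Dirichlet problem on $\mathbb{R}_+^N$.
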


\begin{proof}
Let $(u, v)$ be any nontrivial nonnegative bounded solution of
\eqref{auto-10}. We shall apply Lemma \ref{lem-3.1} to prove the conclusion. Let
\begin{equation}
\label{b-55}
f_1(u,v)=\mu_1u^p+\beta uv\quad \text{and}\quad
f_2(u,v)=\mu_2v^p+\frac{\beta}{2}u^2.
\end{equation}
Since $\mu_1, \mu_2, \beta>0$, we could easily check that
$(L_1)$-$(L_2)$ of Lemma \ref{lem-3.1} hold. It remains to verify the condition of Lemma \ref{lem-3.1} $(L_3)$. We claim that both $u$ and $v$ are either identically zero or positive
in $\mathbb{R}_+^{N}\times\mathbb{R}$. By strong maximum principle (see \cite[Proposition 6.1]{Souplet-2016-Math-Ann} or \cite[Proposition 52.21]{Souplet-Book-2019}) we get that either $(u,v)$ is positive in $\mathbb{R}_+^{N}\times\mathbb{R}$ or there exists $t_0\in\mathbb{R}$ such that $(u,v)=0$ in $\mathbb{R}_+^{N}\times(-\infty, t_0]$. In the latter case, since $(u,v)$ is bounded, it follows that $(u,v)$ satisfying
\begin{equation}
\label{b-56}
\begin{cases}
u_t-\Delta u\leq a_{11}u+a_{12}v,\ (x,t)\in
\mathbb{R}_+^{N}\times\mathbb{R},\\
v_t-\Delta v\leq a_{21}v+a_{22}u,\ (x,t)\in
\mathbb{R}_+^{N}\times\mathbb{R},
\end{cases}
\end{equation}
where $a_{ij}>0(i,j=1,2)$ are constants. Then by using the maximum principle again, we obtain that $u=v\equiv0$ and it proves the claim. Now we conclude that the condition $(L_3)$ holds. Indeed, if $v=0$, then we get from system \eqref{auto-10} that $u=0$. Therefore $(u,v)$ is the trivial solution and contradiction arises. If $u=0$ and $v\neq0$, then $v$ is a positive solution of $v_t-\Delta v=\mu_1v^p(1<p<\frac{N(N+2)}{(N-1)^2})$ and it contradicts  \cite[Theorem 1]{Bidaut-1998-Initial}. Hence, both $(u,v)$ are strictly positive and we finish the whole proof.
\end{proof}

In the end we prove Theorem \ref{th1.2}.

\begin{proof}[Proof of Theorem \ref{th1.2}.]
From Lemma \ref{lem-3.2}, we know that \eqref{b-54} holds. For each $R>0$, we define
\begin{equation}
\label{b-57}
u_R(x_1, x', t)=u(x_1+R, x', t)\quad\text{and}\quad v_R(x_1, x',
t)=v(x_1+R, x', t)
\end{equation}
for $(x_1, x', t)\in(-R,\infty)\times\mathbb{R}^{N-1}\times\mathbb{R}$. From the boundedness of $(u, v)$ and classical parabolic estimates, sending $R\rightarrow\infty$, we get that $(u_R, v_R)$ converges uniformly to $(u_\infty, v_\infty)$ in any compact set after passing to a subsequence if necessary, where $(u_\infty, v_\infty)$ is a bounded, nonnegative classical solution of problem \eqref{auto-1} in $\mathbb{R}^{N-1}\times\mathbb{R}$. The monotonicity of $(u, v)$ implies that $(u_\infty, v_\infty)$ is positive and independent of $x_1$. Thus we obtain a bounded, positive classical solution of problem \eqref{auto-1} in $\mathbb{R}^{N-1}\times\mathbb{R}$. This contradicts Theorem \ref{th1.1} when $N\geq2$. For the case $N=1$ it reduces to an ODE system for which the nonexistence is obvious.
\end{proof}

\bigskip
\section{Existence of periodic solutions}
\subsection{Positive coefficient case}
In this subsection we study the existence of periodic solutions of the two coupled system \eqref{auto-11}. Before giving the proof,
we make the following preparations. We set $X=BUC(\Omega\times(0,T))$ to be the space of bounded uniformly continuous functions, which is equipped with the $L^\infty$-norm $\|\cdot\|_\infty$. For each $z$,
we denote $z^+(x,t)=\max\{z(x,t),0\}$. By a slightly abuse of notation, by $\|\cdot\|_\infty$ we denote both the norm in
$L^\infty(\Omega\times(0,T))$ and $L^\infty(\Omega)$.

In our discussion, we need the following results on the (possibly) sign-changing solution and eigenvalue problem (see \cite[Propositions 10-11]{Quittner-JDE-2016} or
\cite{Esteban-TAMS-1986,Beltramo-CPDE-1984}).

\begin{lemma}
\label{lem3.1}
Suppose that $\Omega\subset\mathbb{R}^{N}$ is a $C^3$-smooth bounded domain, $T>0$ and $f\in X$. Then we have the following conclusions.
\begin{itemize}
\item [$(i)$] The scalar periodic problem
\begin{equation}
\label{c1}
\begin{cases}
w_t-\Delta w=f,\quad &(x,t)\in\Omega\times(0,T)\\
w=0, &(x,t)\in\partial\Omega\times(0,T)\\
w(\cdot,0)=w(\cdot,T), &x\in\Omega,
\end{cases}
\end{equation}
has a unique solution $w$. Moreover, the mapping $\mathcal {T}: f\in X\rightarrow w^+\in X$ is compact.

\item [$(ii)$] There exists $\lambda_1^T>0$ such that the problem
\begin{equation}
\label{c2}
\begin{cases}
-\psi_t-\Delta\psi=\lambda_1^T\psi,\quad &(x,t)\in\Omega\times(0,T)\\
\psi=0, &(x,t)\in\partial\Omega\times(0,T)\\
\psi(\cdot,0)=\psi(\cdot,T), &x\in\Omega,
\end{cases}
\end{equation}
possesses a positive solution $\psi$.
\end{itemize}
\end{lemma}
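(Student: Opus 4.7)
The plan is to prove $(i)$ via the Poincar\'e (period) map associated with the Dirichlet heat semigroup on $\Omega$, and to deduce $(ii)$ by applying the Krein--Rutman theorem to the compact, strongly positive solution operator associated with the adjoint periodic problem.

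For the existence and uniqueness in $(i)$, first I would fix $f \in X$ and let $\{S(t)\}_{t\geq 0}$ denote the Dirichlet heat semigroup on $\Omega$. For any initial datum $w_0$, the solution of $w_t - \Delta w = f$ with $w|_{\partial\Omega}=0$ and $w(\cdot,0)=w_0$ is given by Duhamel's formula $w(t) = S(t) w_0 + \int_0^t S(t-s) f(\cdot,s)\,ds$. The periodicity condition $w(\cdot,T)=w_0$ translates into
\[
(I - S(T)) w_0 \;=\; \int_0^T S(T-s) f(\cdot,s)\,ds.
\]
Since $\Omega$ is bounded, the first Dirichlet eigenvalue $\lambda_1(\Omega)$ is strictly positive, so $S(T)$ has spectral radius at most $e^{-\lambda_1(\Omega)T} < 1$ on the relevant function space (e.g.\ $C_0(\Omega)$ or $L^\infty(\Omega)$). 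Hence $I - S(T)$ is invertible, $w_0$ is uniquely determined by $f$, and this yields the unique periodic solution $w$.

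For the compactness of $\mathcal{T}$, I would invoke parabolic regularity. For $f \in X \subset L^\infty(\Omega\times(0,T))$, the periodic solution $w$ belongs to $C^{\alpha,\alpha/2}(\overline{\Omega}\times[0,T])$ for some $\alpha \in (0,1)$, by the standard parabolic $L^p$ estimates combined with Morrey-type embeddings (and since the $C^3$ smoothness of $\partial\Omega$ justifies the estimates up to the boundary). The inclusion $C^{\alpha,\alpha/2}(\overline{\Omega}\times[0,T]) \hookrightarrow X$ is compact by Arzel\`a--Ascoli, and $z \mapsto z^+$ is (Lipschitz) continuous on $X$, so the composition $\mathcal{T}: f \mapsto w^+$ is compact.

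For $(ii)$, I would perform the time-reversal $\tilde{\psi}(x,t) := \psi(x,T-t)$, which converts the backward equation in \eqref{c2} into a standard forward $T$-periodic Dirichlet heat problem, to which part $(i)$ applies. This produces a compact linear solution operator $\mathcal{K}: X \to X$ sending $f$ to the unique periodic solution $\tilde{w}$ of the forward problem. The key positivity input is the strong maximum principle for periodic parabolic problems: if $0 \leq f \not\equiv 0$, then $\mathcal{K} f > 0$ in $\Omega\times[0,T]$. Working on a suitable ordered Banach space (for instance, periodic functions with values in $C_0(\Omega)$, endowed with the natural positive cone, whose interior is characterized by Hopf's boundary lemma), this strong positivity together with compactness of $\mathcal{K}$ allows me to invoke the Krein--Rutman theorem. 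It delivers a simple largest eigenvalue $\mu_1 > 0$ with a strictly positive eigenfunction $\tilde\psi$; undoing the time-reversal and setting $\lambda_1^T := 1/\mu_1 > 0$ yields the required positive eigenpair for \eqref{c2}.

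The main obstacle I anticipate is verifying the strong positivity of $\mathcal{K}$ in a form suitable for the strong version of Krein--Rutman (simple eigenvalue, unique positive eigenfunction). The periodicity in time prevents a naive appeal to the usual initial-boundary strong maximum principle; this is typically handled by iterating the forward parabolic Hopf lemma over one period and combining it with the fact that $w(\cdot,0) = w(\cdot,T)$ propagates strict positivity forward in time. Once this is in place, the abstract machinery of Krein--Rutman closes the argument, and one recovers the results quoted from \cite{Esteban-TAMS-1986,Beltramo-CPDE-1984,Quittner-JDE-2016}.
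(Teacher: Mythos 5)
The paper does not actually prove Lemma~\ref{lem3.1}; it is quoted directly from the literature, with the references \cite[Propositions 10--11]{Quittner-JDE-2016}, \cite{Esteban-TAMS-1986}, and \cite{Beltramo-CPDE-1984} given in the line just before the statement. Your proposal reconstructs the standard argument behind those references: the Poincar\'e map and Duhamel formula for part $(i)$, using $r(S(T))\le e^{-\lambda_1(\Omega)T}<1$ to invert $I-S(T)$, parabolic H\"older regularity plus Arzel\`a--Ascoli for compactness, and Krein--Rutman for part $(ii)$. This is essentially the argument in Beltramo--Hess and in Quittner's Propositions 10--11, so the route is the same as the one the paper implicitly relies on.

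One small inaccuracy worth flagging: you say the Krein--Rutman theorem can be applied in ``periodic functions with values in $C_0(\Omega)$, endowed with the natural positive cone, whose interior is characterized by Hopf's boundary lemma.'' The positive cone in $C_0(\Omega)$ (functions vanishing on $\partial\Omega$) has \emph{empty} interior, so the strong form of Krein--Rutman does not apply there. The standard fix, which is what Hopf's lemma is actually invoked for, is to work in a space such as $C_0^1(\overline\Omega)=\{u\in C^1(\overline\Omega):u|_{\partial\Omega}=0\}$ (or periodic analogues thereof), where the cone $\{u\ge 0,\ u>0\ \text{in}\ \Omega,\ \partial_\nu u<0\ \text{on}\ \partial\Omega\}$ does have nonempty interior, and where the strong positivity of the period map lands precisely in that interior. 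Alternatively one can use a version of Krein--Rutman for irreducible compact positive operators without the nonempty-interior hypothesis. Once this is corrected, your outline closes correctly and matches the cited literature.
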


To solve the problem, we utilize the homotopy deformation to decouple the system and apply the degree theory to show the existence of a solution. The homotopy transformation is given below
\begin{equation}
\label{c3}
\begin{cases}
\ds u_t-\Delta u=\theta(\mu_1u^p+\beta uv)+(1-\theta)(\lambda u+u^2),\ &x\in\Omega,\ t\in(0, \infty),\\
\ds v_t-\Delta v=\theta(\mu_2v^p+\frac{\beta}{2} u^2)+(1-\theta)(\lambda v+v^2),\ &x\in\Omega,\
t\in(0, \infty),\\
u(x,t)=v(x,t)=0,\quad &\text{on}\ \partial\Omega\times(0, \infty),
\end{cases}
\end{equation}
where $\theta\in[0,1]$ and $\lambda>0$ is a constant. Our next issue is to establish a-priori bounds for system \eqref{c3}. To achieve this goal we should prove a Liouville type result for the following system
\begin{equation}
\label{c4}
\begin{cases}
\ds u_t-\Delta
u=\theta\left(\mu_1u^2+\beta uv\right)+(1-\theta)v^2,\ &x\in\Omega,\ t\in\mathbb{R},\\
\ds v_t-\Delta v=\theta\left(\mu_2v^2+\frac{\beta}{2}
u^2\right)+(1-\theta)u^2,\ &x\in\Omega,\ t\in\mathbb{R},
\end{cases}
\end{equation}
where $\mu_1, \mu_2, \beta>0$ and $\theta\in[0,1]$. Following almost the same arguments as in Theorem \ref{th1.1}-$(i)$, we obtain the following result.

\begin{lemma}
\label{lem3.2}
Suppose that $N\leq5$, $\mu_1, \mu_2, \beta>0, \theta\in[0,1]$ and $\Omega=\mathbb{R}^{N}$. Then the system \eqref{c4} has no nontrivial nonnegative classical solution.
\end{lemma}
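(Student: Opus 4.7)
The plan is to mirror the three-step strategy behind Theorem \ref{th1.1}-(i): establish a weighted interior integral estimate in the spirit of Lemma \ref{lem-2.2}, upgrade it to an $L^2$ bound on the full right-hand side as in Lemma \ref{lem-2.3}, and close the argument with a scale-invariant rescaling at the critical Fujita exponent $p=2$.

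First I would re-derive the analog of Lemma \ref{lem-2.2} for system \eqref{c4}. Applying the B\"ochner-type identity \eqref{b-1} with $m=0$ separately to $u$ and $v$ and substituting $\Delta u, \Delta v$ from the equations in \eqref{c4} (as in \eqref{b-5}--\eqref{b-7-1}), the only new contributions relative to the original computation come from the two extra quadratic couplings. After integration by parts they take the form
\begin{equation*}
2(1-\theta)\int_\Omega \zeta^q v\,\nabla u\cdot\nabla v\,dx
\quad\text{and}\quad
2(1-\theta)\int_\Omega \zeta^q u\,\nabla u\cdot\nabla v\,dx,
\end{equation*}
plus boundary-type pieces carrying $\nabla(\zeta^q)$. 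Using $|v\,\nabla u\cdot\nabla v|\le \epsilon v|\nabla u|^2+\epsilon^{-1}v|\nabla v|^2$ (and its analog with $u$ in place of one $v$, followed by $v^{-1}$-weighting), each of these is absorbed into the good quadratic forms $\int\zeta^q v|\nabla u|^2$, $\int\zeta^q u^2 v^{-1}|\nabla v|^2$ and the quartic terms $\int\zeta^q u^{-2}|\nabla u|^4$, $\int\zeta^q v^{-2}|\nabla v|^4$ already present on the left of \eqref{b-13}, for a suitable choice of $d$ in the window \eqref{b-15} specialized to $p=2$.

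Second, I would repeat the computation of Lemma \ref{lem-2.3}, multiplying each equation of \eqref{c4} by its own right-hand side and integrating against $\zeta^q$. The new terms
$(1-\theta)\int \zeta^q v^2(u_t-\Delta u)\,dxdt$ and $(1-\theta)\int\zeta^q u^2(v_t-\Delta v)\,dxdt$
are handled by the gradient and time-derivative estimates from the first step together with Young's inequality, producing a bound of the form
\begin{equation*}
\int_G \zeta^q\bigl(F_1^2+F_2^2\bigr)\,dxdt
\le C\int_G \zeta^{q-4}\bigl(|\nabla\zeta|^2+|\Delta\zeta|+|\zeta_t|\bigr)^{2}\,dxdt,
\end{equation*}
where $F_1,F_2$ denote the right-hand sides of \eqref{c4} and $C$ is independent of $\theta\in[0,1]$. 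Because all nonlinearities in \eqref{c4} are quadratic, the parabolic scaling $(u^R,v^R)(y,s):=(R^{2}u(Ry,R^{2}s),R^{2}v(Ry,R^{2}s))$ leaves the system (with the same $\theta$ and the same coefficients) invariant. Choosing a standard cut-off supported in the unit parabolic cylinder and undoing the change of variables, the bound above applied to $(u^R,v^R)$ yields
\begin{equation*}
\int_{-R^2}^{R^2}\int_{|x|<R}\bigl(F_1^2+F_2^2\bigr)\,dxdt\le C R^{N+2-8}=CR^{N-6},
\end{equation*}
which tends to zero as $R\to\infty$ since $N\le 5$. Hence $F_1\equiv F_2\equiv 0$, and since the coefficient of $u^2$ in $F_2$, namely $\theta\beta/2+(1-\theta)$, is strictly positive for every $\theta\in[0,1]$, one gets $u\equiv 0$; feeding this into $F_1\equiv 0$ gives $v\equiv 0$ as well.

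The main technical obstacle will be verifying that the good coefficients in the analog of \eqref{b-13} remain bounded below uniformly in $\theta\in[0,1]$, particularly as $\theta\to 0^+$ where the diagonal terms $\mu_i u^p, \mu_i v^p$ degenerate. This is addressed by noting that the quartic coefficient $A_0$ and the coupling coefficient $C_3$ in \eqref{b-14} do not depend on $\mu_1,\mu_2$, while the gradient coefficient $B_3$, which does vanish at $\theta=0$, is only ever used to absorb the contributions $V_u, V_v$ which themselves carry a factor of $\mu_i$, so its degeneracy is harmless. A secondary point is checking that the boundary-type cross terms generated by the new $(1-\theta)v^2$ and $(1-\theta)u^2$ couplings do not destroy the absorption scheme: here one uses that their prefactor $1-\theta\in[0,1]$ is uniformly bounded and Young's inequality with arbitrarily small parameter.
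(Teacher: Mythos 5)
Your overall strategy mirrors the paper's: the paper proves Lemma~\ref{lem3.2} by ``following almost the same arguments as in Theorem~\ref{th1.1}-(i)'', i.e.\ re-deriving the analogs of Lemma~\ref{lem-2.2} and Lemma~\ref{lem-2.3} for the homotopy system \eqref{c4} and then applying the parabolic rescaling. Your identification of the new cross terms $\int\zeta^q v\,\nabla u\cdot\nabla v$ and $\int\zeta^q u\,\nabla u\cdot\nabla v$ and of the scale invariance at $p=2$ are both correct.

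However, your treatment of the key obstacle --- uniformity in $\theta\in[0,1]$ of the absorption scheme --- is not correct. You argue that the coupling coefficient $C_3$ in \eqref{b-14} ``does not depend on $\mu_1,\mu_2$'' and therefore survives as $\theta\to 0$. But $C_3=\tfrac{(N+2)d-4(N-1)}{4N}\beta$ is proportional to $\beta$, and in the homotopy system \eqref{c4} the $uv$- and $u^2$-couplings carry the factor $\theta\beta$; consequently the coefficients of the good terms $\int\zeta^q v|\nabla u|^2$ and $\int\zeta^q u^2 v^{-1}|\nabla v|^2$ inherited from the original analysis all vanish as $\theta\to 0$, as does $B_3$. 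Your proposed absorption of the new cross terms into precisely these forms therefore breaks down in the regime $\theta\to 0^+$. What you are missing is that the \emph{new} couplings $(1-\theta)v^2$ and $(1-\theta)u^2$ themselves generate \emph{new} nondegenerate good terms: the analog of \eqref{b-7} applied to the first equation of \eqref{c4} yields a positive contribution $\frac{(N+2)d}{2N}(1-\theta)\int\zeta^q u^{-1}v^2|\nabla u|^2\,dx$, and the second equation gives $\frac{(N+2)d}{2N}\bigl(\tfrac{\theta\beta}{2}+(1-\theta)\bigr)\int\zeta^q v^{-1}u^2|\nabla v|^2\,dx$, whose coefficient is bounded below by $\frac{(N+2)d}{2N}\min\{1,\tfrac{\beta}{2}\}>0$ for all $\theta\in[0,1]$. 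These are the good quadratic forms against which the new cross terms $-\frac{2(N-1)(1-\theta)}{N}\int\zeta^q(u+v)\nabla u\cdot\nabla v$ must be weighed --- not the $\theta$-degenerate $C_3$-terms --- and the Cauchy--Schwarz splitting has to be matched to them (e.g.\ $2v\nabla u\cdot\nabla v=2(u^{-1/2}v\nabla u)\cdot(u^{1/2}\nabla v)$) rather than to $\int\zeta^q v|\nabla u|^2$ and $\int\zeta^q v|\nabla v|^2$. As written, your absorption scheme has a genuine gap near $\theta=0$.

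A secondary point worth flagging: even with the right good terms, the split $2v\nabla u\cdot\nabla v\le u^{-1}v^2|\nabla u|^2+u|\nabla v|^2$ leaves over $u|\nabla v|^2$, which is not of the same homogeneity as either new good form, and its further estimation (via $v^{-1}u^2|\nabla v|^2$ and $v^{-2}|\nabla v|^4$) introduces a zeroth-order term $\int\zeta^q u^2v^2$ carrying no cut-off derivative. That term can only be closed against the left-hand side of the Lemma~\ref{lem-2.3}-analog, using $u^2v^2\le\tfrac12(F_1^2+F_2^2)$, and one must verify its coefficient can be made strictly less than $1$ by tuning $d$ and the Young parameters; your proposal does not address this. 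These details are precisely what the phrase ``almost the same arguments'' in the paper silently absorbs, and your proof as drafted would need to supply them.
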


With Lemma \ref{lem3.2} we prove that any periodic solution of  \eqref{c3} is uniformly bounded for any $\theta\in[0,1]$.

\begin{lemma}
\label{lem3.3}
Suppose that $N\leq5$, $\mu_1, \mu_2, \beta>0, \theta\in[0,1]$ and $2\leq p<p_B(N)$. Then there exists $C>0$ such that any positive $T$-periodic solution $(u,v)$ of \eqref{c3} satisfies
\begin{equation}
\label{c5}
u(x,t)+v(x,t)\leq C,\quad\forall
(x,t)\in\Omega\times(0,\infty).
\end{equation}
\end{lemma}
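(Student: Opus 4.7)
My plan is to argue by contradiction, using a doubling/rescaling argument of Pol\'a\v{c}ik--Quittner--Souplet type, exactly as in the proof of Theorem~\ref{th1.1}-(ii), with careful tracking of the homotopy parameter~$\theta$. Assume toward a contradiction that there exist $\theta_k \in [0,1]$ and positive $T$-periodic solutions $(u_k,v_k)$ of \eqref{c3} with $\theta=\theta_k$ such that $M_k := \|u_k+v_k\|_{L^\infty(\Omega\times(0,T))}\to\infty$. Extend them periodically in $t$ to $\Omega\times\mathbb{R}$, extract $\theta_k\to\theta_\infty\in[0,1]$, and apply the doubling lemma (Lemma~\ref{lem-2.4}) to $F_k=(u_k+v_k)^{(p-1)/2}$ on $\overline\Omega\times\mathbb{R}$ (whose parabolic boundary is just $\partial\Omega\times\mathbb{R}$ by time-periodicity) to produce blow-up points $(x_k,t_k)$ with $N_k := F_k(x_k,t_k)\to\infty$ and $F_k\le 2N_k$ on the parabolic cylinder of radius $k/N_k$ around $(x_k,t_k)$.

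Then I rescale with $\lambda_k=1/N_k$,
\[
\hat u_k(y,s)=\lambda_k^{2/(p-1)}u_k(x_k+\lambda_k y,t_k+\lambda_k^2 s),\qquad \hat v_k(y,s)=\lambda_k^{2/(p-1)}v_k(x_k+\lambda_k y,t_k+\lambda_k^2 s),
\]
so that $\hat u_k+\hat v_k\le 2^{2/(p-1)}$ and $(\hat u_k+\hat v_k)(0,0)=1$; the rescaled system carries an extra factor $\lambda_k^{(2p-4)/(p-1)}$ on the $uv$, $u^2$ and $v^2$ terms and $\lambda_k^2$ on the linear terms. Standard parabolic regularity passes to a non-trivial $C^{2,1}_{\mathrm{loc}}$-limit $(\hat u,\hat v)$ defined on $\mathbb{R}^N\times\mathbb{R}$ if $N_k\,\mathrm{dist}(x_k,\partial\Omega)\to\infty$, or on $\mathbb{R}_+^N\times\mathbb{R}$ with homogeneous Dirichlet data otherwise. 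For $p=2$ one has $\lambda_k^{(2p-4)/(p-1)}=1$, so all quadratic terms survive and combine into a system of exactly the form \eqref{c4} with positive coefficients, contradicting Lemma~\ref{lem3.2} in the whole space or Theorem~\ref{th1.2} in the half space. For $p>2$ the factor $\lambda_k^{(2p-4)/(p-1)}$ vanishes, and if $\theta_\infty>0$ the limit decouples to $\partial_s\hat u-\Delta\hat u=\theta_\infty\mu_1\hat u^p$ (and similarly for $\hat v$), contradicting Bidaut-V\'eron's scalar Liouville theorem since $p<p_B(N)$.

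The main obstacle is the remaining subcase $p>2$ with $\theta_\infty=0$ at an interior blow-up point: the $p$-homogeneous scaling collapses the limit to the linear heat equation, for which every nonnegative constant is admissible and no immediate contradiction arises. To fix this I would choose the scaling adaptively from the dominant nonlinearity, replacing $\lambda_k^{-2}=N_k^{p-1}$ by $\lambda_k^{-2}=\max\{\theta_k\mu_1 N_k^{p-1},\,N_k\}$. In the regime $\theta_k N_k^{p-2}\to 0$ this isolates the $\hat u^2$ nonlinearity and yields the scalar limit $\partial_s\hat u-\Delta\hat u=\hat u^2$, ruled out by V\'eron's theorem since $2<p_B(N)$ for $N\le 5$; the opposite regime $\theta_k N_k^{p-2}\to\infty$ reduces to the pure $\hat u^p$ limit already handled; and the intermediate regime $\theta_k N_k^{p-2}\to L\in(0,\infty)$ produces $\partial_s\hat u-\Delta\hat u=\mu_1 L\hat u^p+\hat u^2\ge\hat u^2$, so a V\'eron-type integral argument applied to the quadratic lower bound still forces $\hat u\equiv 0$. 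The corresponding half-space limits are killed by Theorem~\ref{th1.2} in the $u^p$-regime and by the classical Dirichlet-boundary Liouville theorem for the heat equation (via reflection across $\{x_1=0\}$) in the $u^2$-regime, closing the contradiction with $(\hat u+\hat v)(0,0)=1$ in every case and proving the claimed uniform bound.
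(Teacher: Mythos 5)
Your proposal follows the same doubling--rescaling--Liouville skeleton as the paper's proof of Lemma~\ref{lem3.3}, but with one important difference: you correctly notice that when $p>2$ the paper's rescaling \eqref{c--9} makes every nonlinear coefficient either $\lambda_k^{2(p-2)/(p-1)}$ or $\theta_k$, and in the subcase $\theta_k\to 0$ all of these vanish, so the limit solves the linear heat equation; constants are then admissible and the paper's claimed contradiction with \cite[Theorem A]{Souplet-IUMJ-2007} does not hold. The paper's proof is silent on this subcase, so your observation is genuine and valuable.

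However, the fix you propose has a gap of its own. You keep the doubling lemma applied to $F_k=(u_k+v_k)^{(p-1)/2}$, which yields a parabolic cylinder of radius $kN_k^{-(p-1)/2}$ around $(x_k,t_k)$ (here $N_k=(u_k+v_k)(x_k,t_k)$). If you now rescale instead with $\lambda_k^{-2}=\max\{\theta_k\mu_1 N_k^{p-1},N_k\}$, the rescaled cylinder has radius
\[
\frac{kN_k^{-(p-1)/2}}{\lambda_k}=k\sqrt{\max\{\theta_k\mu_1,N_k^{2-p}\}},
\]
and the doubling lemma only gives you the lower bound $N_k^{(p-1)/2}>2k$, i.e.\ an upper bound on $N_k^{(2-p)/2}$; there is no lower bound, so for $p>2$ the radius $k\sqrt{\max\{\theta_k\mu_1,N_k^{2-p}\}}$ need not tend to infinity (take $N_k$ growing arbitrarily fast and $\theta_k$ of order $N_k^{2-p}$). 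You therefore cannot pass to a limit on all of $\mathbb{R}^N\times\mathbb{R}$. The correct way to make your idea rigorous is to apply the doubling lemma directly to a function adapted to the scaling you intend to use, e.g.\ $F_k=\bigl(\theta_k\mu_1(u_k+v_k)^{p-1}+(u_k+v_k)\bigr)^{1/2}$; then the rescaled cylinder has radius exactly $k$, and the three regimes you describe follow. Two further small issues: your reflection argument for the half-space limit of $u_t-\Delta u=u^2$ does not work, since odd reflection across $\{x_1=0\}$ is not compatible with the quadratic nonlinearity (it would produce $-u^2$, not $u^2$, in $\{x_1<0\}$); instead one should argue via monotonicity in $x_1$ and translation as in Theorem~\ref{th1.2}. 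And the Liouville result you invoke in the intermediate regime, for $u_t-\Delta u=\mu_1 L\,u^p+u^2$, is only a scalar variant of Lemmas~\ref{lem-2.2}--\ref{lem-2.3} that is not proved explicitly in the paper; it is plausible but needs to be stated and checked.
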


\begin{proof}
The idea follows the proof of Theorem \ref{th1.1}-(ii). We prove the result by contradiction. If \eqref{c5} is not satisfied, then there exist sequences $D_k:=\Omega_k \times(0, T_k), (u_k, v_k)$ and $(y_k, \tau_k)\in D_k$ such that
\begin{equation}
\label{c6}
M_k(x,t)\mid_{x=y_k,t=\tau_k}=(u_k+v_k)^{\frac{p-1}{2}}(y_k,\tau_k)>2k.
\end{equation}
From the doubling lemma (Lemma \ref{lem-2.4}), we infer that there exists $(x_k, t_k)\in D_k$ such that
\begin{equation}
\label{c7}
\begin{split}
&M_k(x_k, t_k)\geq M_k(y_k, \tau_k),\quad M_k(x_k,t_k)>2k,\\
&M_k(x, t)\leq2M_k(x_k, t_k)\quad \text{whenever}\quad d_P((y_k,
    \tau_k), \partial D_k)\leq kM_k^{-1}.
\end{split}
\end{equation}
We set
\begin{equation}
%\label{b-50}
(\hat{u}_k(y,s),\hat{v}_k(y,s))=\left(\lambda_k^{\frac{2}{p-1}}u_k(x_k+\lambda_ky,t_k+\lambda_k^2s),\lambda_k^{\frac{2}{p-1}}v_k(x_k+\lambda_ky,t_k+\lambda_k^2s)\right),
\end{equation}
where $(y, s)\in\hat{D}_k=\{y\in\mathbb{R}^{N}: |y|<k/2\}\times\left(-\frac{k^2}{4}, \frac{k^2}{4}\right)$ and $\lambda_k=M_k^-1(x_k,t_k)$, which tends to $0$ as $k\to\infty$. Then it follows that $(\hat{u}_k(y,s), \hat{v}_k(y,s))$ satisfies
\begin{equation}
\label{c--9}
\begin{cases}
\ds \hat{u}_{k,t}-\Delta
\hat{u}_k=\theta\left(\mu_1\hat{u}_k^p+\beta\lambda_k^{\frac{2(p-2)}{p-1}}\hat{u}_k\hat{v}_k\right)+(1-\theta)\left(\lambda\lambda_k^{2}\hat{u}_k+\lambda_k^{\frac{2(p-2)}{p-1}}\hat{u}_k^2\right),\\
\ds\hat{v}_{k,t}-\Delta\hat{v}_k=\theta\left(\mu_2\hat{v}_k^p+\frac{\beta}{2}\lambda_k^{\frac{2(p-2)}{p-1}}\hat{u}_k^2\right)+(1-\theta)\left(\lambda\lambda_k^{2}\hat{v}_k+\lambda_k^{\frac{2(p-2)}{p-1}}\hat{v}_k^2\right).
\end{cases}
\end{equation}
If $p>2$, we get that $(\hat{u}_k,\hat{v}_k)$ converges (after passing a subsequence if necessary) in $C_{\mathrm{loc}}^2(\mathbb{R}^{N}\times\mathbb{R})$
to a classical nonnegative solution $(\hat{u},\hat{v})$ of
\eqref{auto-1} with $\beta=0$. This contradicts \cite[Theorem
    A]{Souplet-IUMJ-2007}. If $p=2$, we know that some
subsequence of $(\hat{u}_k, \hat{v}_k)$ converges in $C_{\mathrm{loc}}^2(\mathbb{R}^{N}\times\mathbb{R})$ to a classical nonnegative solution $(\hat{u},\hat{v})$ of \eqref{c4}. This is a contradiction.
\end{proof}

Now we are ready to give the proof of Theorem \ref{th1.3}.
\begin{proof}[Proof of Theorem \ref{th1.3}.]
Let $\mathcal{T}$ be the compact mapping by Lemma \ref{lem3.1}. We define the operator $\mathcal{L}$: $X\times X\to X\times X$ by
\begin{equation}
\label{c13}
\mathcal {L}(u,v)=\left(\mathcal {T}(\mu_1u^p+\beta uv), \mathcal
{T}(\mu_2v^p+\frac{\beta}{2}u^2)\right).
\end{equation}
It is clear that $\mathcal{L}$ is compact and the nontrivial fixed point of \eqref{c13} corresponds to a non-negative periodic solutions of \eqref{auto-11}. Indeed, if $(u,v)\neq(0,0)$ is a fixed point of $\mathcal {T}$, then we get that $(u,v)=(w^+, z^+)$ and $(w, z)$ is
$T$-periodic solution of
\begin{equation}
\label{c14}
\begin{cases}
\ds w_t-\Delta
w=\mu_1(w^+)^p+\beta w^+z^+,\ &x\in\Omega,\ t\in(0, \infty),\\
\ds z_t-\Delta z=\mu_2(z^+)^p+\frac{\beta}{2}(w^+)^2,\ &x\in\Omega,\
t\in(0, \infty),\\
w(x,t)=z(x,t),\quad &\text{on}\ \partial\Omega\times(0, \infty).
\end{cases}
\end{equation}
Using the maximum principle $w, z\geq0$. Furthermore, we claim that both $w$ and $z$ are strictly positive. In fact, if $z\equiv0$, from the second equation of \eqref{c14} we infer that $w\equiv0$. This contradicts $(w,z)\neq(0,0)$. On the other hand, if $w\equiv0$, then $z$ satisfies $z_t-\Delta z=\mu_2z^p$. Following a similar argument as we did in Lemma \ref{lem3.3}, we get that any positive periodic solution of $z$ of $z_t-\Delta z=\mu_2z^p$ is bounded. That is, $z(x,t)\leq C$, where $C>0$. Multiplying the equation by $\varphi_1$ and integrating over $\Omega$ yields
\begin{equation*}
\frac{d}{dt}\int_{\Omega}z(\cdot,t)\varphi_1dx=\int_{\Omega}\left(-\lambda_1+\mu_2z^{p-1}\right)\varphi_1zdx<0,
\end{equation*}
provided $\mu_2\geq0$ sufficiently small. It implies that the function $t\mapsto\int_{\Omega}z(\cdot,t)\varphi_1dx$ is strictly decreasing in time, which contradicts the periodicity of $z$.

Finally, it remains to show the existence of a nontrivial fixed point of $\mathcal{L}$. We shall apply the techniques of \cite{Quittner-JDE-2016} to compute the Leray-Schauder degree of
$F(r)=deg(I-\mathcal {T}, B_r, 0)$ for small and large $r>0$ respectively. Here $I$ denotes the identity map and $B_r$ is the ball in $X\times X$ with radius $r$ centered at zero. We first prove that $F(r)=1$ for $r>0$ sufficiently small. In order to prove it, we define the homotopy
\begin{equation}
\label{c15}
\mathcal {L}_s(u,v)=\left(\mathcal {T}(s(\mu_1u^p+\beta uv)),
\mathcal {T}(s(\mu_2v^p+\frac{\beta}{2}u^2))\right),\quad s\in[0,1].
\end{equation}
First, we shall show that $\mathcal {L}_s$ is admissible by contradiction arguments. Assume that there exists a nontrivial fixed point $(u,v)$ of $\mathcal {L}_h$ satisfying $\|(u,v)\|_\infty=r$ with $r$ sufficiently small. Fixing a positive number $t$ such that $\|(u(\cdot,t),v(\cdot,t))\|_\infty=r$. Without loss of generality
we may assume $\|u(\cdot,t)\|_\infty=r$. Since $(u, v)$ is a
positive periodic solution of \eqref{auto-11} with the right-hand
sides multiplied by $s$, it follows from the variation-of-constants
formula that
\begin{equation}
\label{c16}
\begin{split}
r=\|u(\cdot,t+T)\|_\infty&\leq
e^{-\lambda_1T}\|u(\cdot,t)\|_\infty+CTs\left(\|u\|_\infty^p+\|u\|_\infty\|v\|_\infty\right)\\
&\leq e^{-\lambda_1T}r+C(r^p+r^2).
\end{split}
\end{equation}
This is a contradiction for $r$ sufficiently small. Hence we obtain
that $F(r)=1$ for $r>0$ sufficiently small. On the other hand, we
consider the case $r>0$ large. Let us set
\begin{equation}
\label{c17}
\begin{split}
\mathcal {L}_s(u,v)=\left(\mathcal {T}(s(\mu_1u^p+\beta uv)+(1-s)(\lambda u+u^2)),
\mathcal {T}(s(\mu_2v^p+\frac{\beta}{2}u^2)+(1-s)(\lambda
v+v^2))\right),
\end{split}
\end{equation}
where $s\in[0,1]$ and $\lambda=\lambda_1^T+2$. The estimates in
Lemma \ref{lem3.3} guarantee that this homotopy is admissible if $r$ is large enough. Hence it is sufficient to show that problem
\eqref{c17} does not possess positive periodic solutions if $s=0$.
We use the contradiction arguments. Assume that $(u,v)$ is a
positive $T$-periodic solution of the system
\begin{equation}
\label{c18}
\begin{cases}
\ds u_t-\Delta
u=\lambda u+u^2,\ &x\in\Omega,\ t\in(0, \infty),\\
\ds v_t-\Delta v=\lambda v+v^2,\ &x\in\Omega,\
t\in(0, \infty),\\
u(x,t)=v(x,t)=0,\quad &\text{on}\ \partial\Omega\times(0, \infty).
\end{cases}
\end{equation}
Multiplying the first equation by the eigenfunction $\psi$ from
Lemma \ref{lem3.1}, integrating over $\Omega\times (0, T)$ and using integration by parts we obtain
\begin{equation}
\label{c19}
\lambda_1^T\int_{0}^T\int_{\Omega}u\psi dxdt\geq\lambda\int_{\Omega}u\psi dxdt.
\end{equation}
This is a contradiction.
\end{proof}

\subsection{Negative coefficients case}
In this subsection we prove the Liouville type results and derive the existence of periodic solutions for the two coupled system \eqref{auto-1} with negative coefficients. Let $f(u,v)=-|\mu_1|u^2+\beta uv$ and $g(u,v)=-|\mu_2|v^2+\beta/2 u^2$. We first prove the Liouville type results.

\begin{proof}[Proof of Theorem \ref{th1.4}.]
We first claim that there exists a real number $K>0$ such that
\begin{equation}
\label{c20}
(Kg-f)(u-Kv)\geq C(u+Kv)(u-Kv)^2,\ \text{where}\ C>0.
\end{equation}
Indeed, by direct computations we see that
\begin{equation}
\label{c21}
\begin{split}
(Kg-f)&=-K|\mu_2|v^2+\frac{\beta}{2}Ku^2+|\mu_1|u^2-\beta uv\\
&=v^2(|\mu_1|\theta^2+\frac{\beta}{2}K\theta^2-\beta\theta
-K|\mu_2|):=v^2h_K(\theta),
\end{split}
\end{equation}
where $\theta=u/v$. We insert $\theta=K$ into $h_K(\theta)$ and $h_K(K)$ can be regarded as polynomial of degree 3. It is clear that $h_K(K)=0$ has only one positive root
$$K_+=\frac{-|\mu_1|+\sqrt{|\mu_1|^2+2\beta(\beta+|\mu_2|)}}{\beta}>0.$$
Concerning $K_+$, we claim that
\begin{equation}
\label{c22}
m(\theta)=\frac{h_{K_+}(\theta)}{\theta^2-K_+^2}\geq C>0.
\end{equation}
In fact, since $m(\theta)>0$ in $(0,K_+)\cap(K_+,\infty)$ and $m(\theta)$ has positive limit as $\theta$ goes to $K_+$ or $+\infty$, it follows that the claim \eqref{c23} holds. We infer from the following basic inequality
\begin{equation}
\label{c23}
(x^k-y^k)(x-y)\geq C_k(x+y)^{k-1}(x-y)^2
\end{equation}
that
\begin{equation}
\label{c24}
(K_+g-f)(u-K_+v)=v^3h_{K_+}(\theta)(\theta-K_+)\geq C(u+K_+v)(u-K_+v)^2.
\end{equation}
Let $w=u-K_+v$. We infer from \eqref{c20} that
\begin{equation}
%\label{c25}
(w_t-\Delta w)sign(w)\leq -C|u-Kv|^2
\end{equation}
where $C>0$ is a constant. Then it follows from \cite[Proposition
4]{Quittner-JDE-2016} that $w\equiv0$ on $X$. That is, $u=K_+v$
satisfies
\begin{equation}
\label{c25}
u_t-\Delta
u=\left(\sqrt{|\mu_1|^2+2\beta(\beta+|\mu_2|)}-2|\mu_1|\right)u^2,\
(x,t)\in X\times\mathbb{R}.
\end{equation}
If $X=\mathbb{R}^{N}$, we claim that $u+v$ is bounded. Indeed, suppose $u(y_k,\tau_k)+v(y_k,\tau_k)\rightarrow\infty$ for some $(y_k,\tau_k)\in\mathbb{R}^{N}\times\mathbb{R}$ as $k\rightarrow\infty$. Set $M=\sqrt{u+v}$. From Lemma \ref{lem-2.4} we derive that there exists $(x_k, t_k)$ such that \eqref{c7} holds. We define
\begin{equation}
\label{c27}
(\hat{u}_k(y,s),
\hat{v}_k(y,s))=\left(\lambda_k^{2}u_k(x_k+\lambda_ky,
t_k+\lambda_k^2s), \lambda_k^{2}v_k(x_k+\lambda_ky,
t_k+\lambda_k^2s)\right),
\end{equation}
where $(y, s)\in\hat{D}_k=\{y\in\mathbb{R}^{N}:
|y|<k/2\}\times\left(-\frac{k^2}{4}, \frac{k^2}{4}\right)$. As in
the proof of Lemma \ref{lem3.3}, we get that there exists some subsequence of
$(\hat{u}_k, \hat{v}_k)$ converging in
$C_{\mathrm{loc}}^2(\mathbb{R}^{N}\times\mathbb{R})$ to a classical
non-negative bounded solution of \eqref{auto-1} with $p=1$. Therefore, we can assume that $(u,v)$ is bounded. Using Theorem \ref{th1.1}-$(i)$ we get that $u\equiv0$ and it implies that $u=v\equiv0$. Such conclusion also holds if $X=\mathbb{R}_+^{N}$. Hence, we finish the proof.
\end{proof}

Next we prove Theorem \ref{th1.4}-$(ii)$. By using similar
arguments as in Lemma \ref{lem3.3}, we have the following result.

\begin{lemma}
\label{lem3.4}
Assume that $N\leq 5$, $\mu_1, \mu_2<0, \beta>0$ and
$p=2$. Then for any positive $T$-periodic solution $(u,v)$ of \eqref{c3} there exists a generic constant $C>0$ (independent of $\theta$ such that
\begin{equation}
\label{c28}
u(x,t)+v(x,t)\leq C\quad\mbox{for  all}\quad (x,t)\in\Omega\times(0,\infty).
\end{equation}
\end{lemma}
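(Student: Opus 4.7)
The proof follows the contradiction-plus-doubling-plus-blow-up scheme of Lemma \ref{lem3.3}, with a new limit system that interpolates between the decoupled positive-quadratic case (at $\theta = 0$) and the Theorem \ref{th1.4}$(i)$ case (at $\theta = 1$).

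Assume for contradiction that the bound fails, so there exist $\theta_k \in [0,1]$ and positive $T$-periodic solutions $(u_k, v_k)$ of \eqref{c3} with $M_k := (u_k + v_k)^{1/2}$ blowing up at points $(y_k, \tau_k) \in \Omega \times (0,T)$. After extracting a subsequence assume $\theta_k \to \theta^* \in [0,1]$. Apply Lemma \ref{lem-2.4} with $F = M_k$ to produce new points $(x_k, t_k)$ where $M_k(x_k,t_k) \to \infty$ and $M_k \leq 2 M_k(x_k,t_k)$ on a parabolic neighbourhood of radius $k \lambda_k$, with $\lambda_k := M_k^{-1}(x_k,t_k) \to 0$. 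Then rescale by
\begin{equation*}
\hat u_k(y,s) := \lambda_k^{2} u_k(x_k + \lambda_k y, t_k + \lambda_k^2 s), \qquad \hat v_k(y,s) := \lambda_k^{2} v_k(x_k + \lambda_k y, t_k + \lambda_k^2 s).
\end{equation*}
Since $p = 2$, every quadratic term is scale-invariant, while the linear term $\lambda u$ in the $(1-\theta)$-block picks up a vanishing factor $\lambda_k^2$. Hence $(\hat u_k,\hat v_k)$ satisfies
\begin{equation*}
\begin{cases}
\partial_s \hat u_k - \Delta \hat u_k = \theta_k (\mu_1 \hat u_k^2 + \beta \hat u_k \hat v_k) + (1-\theta_k)(\lambda \lambda_k^2 \hat u_k + \hat u_k^2), \\
\partial_s \hat v_k - \Delta \hat v_k = \theta_k (\mu_2 \hat v_k^2 + \tfrac{\beta}{2} \hat u_k^2) + (1-\theta_k)(\lambda \lambda_k^2 \hat v_k + \hat v_k^2),
\end{cases}
\end{equation*}
with normalization $\hat u_k(0,0) + \hat v_k(0,0) = 1$ and the uniform local bound $\hat u_k + \hat v_k \leq 4$ on expanding parabolic cylinders. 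Standard parabolic Schauder estimates produce a $C^{2,1}_{\mathrm{loc}}$ limit $(\hat u, \hat v) \geq 0$, nontrivial and bounded, satisfying
\begin{equation*}
\begin{cases}
\hat u_t - \Delta \hat u = A \hat u^2 + c \hat u \hat v, \\
\hat v_t - \Delta \hat v = B \hat v^2 + \tfrac{c}{2} \hat u^2,
\end{cases}
\end{equation*}
with $A = 1 - \theta^*(1 + |\mu_1|)$, $B = 1 - \theta^*(1 + |\mu_2|)$, $c = \theta^* \beta$, on $\mathbb{R}^N \times \mathbb{R}$ (interior blow-up) or on $\mathbb{R}_+^N \times \mathbb{R}$ with homogeneous Dirichlet data (boundary blow-up).

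The main obstacle is a Liouville theorem for this $\theta^*$-parameter family. At the endpoints this is known: $\theta^* = 0$ gives a decoupled scalar equation $\hat u_t - \Delta \hat u = \hat u^2$, ruled out for $1 < p < p_B(N)$ by \cite[Theorem 1]{Bidaut-1998-Initial}; $\theta^* = 1$ is exactly Theorem \ref{th1.4}$(i)$. For intermediate $\theta^* \in (0,1)$, the plan is to repeat the argument of Theorem \ref{th1.4}$(i)$: find $K = K(\theta^*) > 0$ such that
\begin{equation*}
(Kg - f)(u - Kv) \geq C (u + Kv)(u - Kv)^2,
\end{equation*}
where $f = Au^2 + cuv$ and $g = Bv^2 + \tfrac{c}{2}u^2$. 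Setting $\tau = u/v$ this reduces to studying the quadratic $h_K(\tau) = (Kc/2 - A)\tau^2 - c\tau + KB$, and solving $h_K(K) = 0$ yields $K_+(\theta^*) = (A + \sqrt{A^2 - 2c(B-c)})/c$. A continuity argument, anchored at $\theta^* = 1$ where the hypothesis $3|\mu_1|^2 < 2\beta(\beta + |\mu_2|)$ makes the discriminant positive and $K_+(1) > 0$, shows that $K_+(\theta^*)$ remains admissible on all of $(0, 1]$. With $K_+$ in hand, the argument of Theorem \ref{th1.4}$(i)$ combined with \cite[Proposition 4]{Quittner-JDE-2016} forces $\hat u \equiv K_+ \hat v$, so $\hat u$ solves a scalar quadratic equation $\hat u_t - \Delta \hat u = \Gamma(\theta^*) \hat u^2$ with $\Gamma(\theta^*) > 0$; the scalar Liouville of \cite[Theorem 1]{Bidaut-1998-Initial} then forces $\hat u \equiv 0$, contradicting the normalization $\hat u(0,0) + \hat v(0,0) = 1$.
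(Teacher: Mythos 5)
The overall architecture — contradiction, doubling lemma, rescaling at $p=2$ so the quadratic block is scale invariant, passage to a limit system, then a Liouville theorem parametrized by $\theta^*$ — is exactly what "similar arguments as in Lemma~\ref{lem3.3}" requires, and your computation of the limit coefficients $A = 1-\theta^*(1+|\mu_1|)$, $B = 1-\theta^*(1+|\mu_2|)$, $c=\theta^*\beta$ is correct. The genuine gap is in the Liouville step for intermediate $\theta^*$.

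You propose to run the $K$-synchronization argument of Theorem~\ref{th1.4}$(i)$ on the whole range $\theta^*\in(0,1]$, supported only by a "continuity argument." This does not go through. Write $h_{K_+}(\tau)=(\tfrac{K_+c}{2}-A)\tau^2-c\tau+K_+B$ and $m(\tau)=h_{K_+}(\tau)/(\tau^2-K_+^2)$; the inequality \eqref{c20} is precisely $m(\tau)\geq C>0$ for all $\tau>0$. In Theorem~\ref{th1.4}$(i)$ this holds because the constant term $K_+B=-K_+|\mu_2|<0$ forces the two roots of $h_{K_+}$ to straddle zero, so the extra factor $\dfrac{\tau-K'}{\tau+K_+}$ stays positive. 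In your family this mechanism requires $B<0$, i.e. $\theta^*>\tfrac{1}{1+|\mu_2|}$. For smaller $\theta^*$ one has $B>0$, hence $K_+B>0$; in that case the leading coefficient of $h_{K_+}$ is negative (for $\theta^*$ small, $A\to 1$, $c\to 0$, $K_+\to 1$, so $\tfrac{K_+c}{2}-A\to -1$), the spare root $K'$ is negative, and $m(\tau)=(\text{negative})\cdot\dfrac{\tau-K'}{\tau+K_+}<0$ for all $\tau>0$. The key pointwise inequality therefore \emph{fails} on the lower part of the $\theta^*$-range, and with it your entire argument for the intermediate regime. The correct move there is different: for $\theta^*<\min\{\tfrac{1}{1+|\mu_1|},\tfrac{1}{1+|\mu_2|}\}$ the limit system has $A,B,c>0$, so it is of the \emph{positive-coefficient} form and the right Liouville theorem is Theorem~\ref{th1.1}$(i)$/Lemma~\ref{lem3.2} (integral estimates from Lemmas~\ref{lem-2.2}--\ref{lem-2.3}), not synchronization. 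Your plan to apply Theorem~\ref{th1.4}$(i)$'s machinery uniformly is the wrong tool for that range, and the case split must be made explicit.

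Two further issues compound this. First, even where the synchronization approach is the right one, your discriminant claim is not justified: $D(\theta^*)=A^2-2c(B-c)$ is a quadratic in $\theta^*$ whose endpoint values $D(0)=1$ and $D(1)=|\mu_1|^2+2\beta(|\mu_2|+\beta)$ are positive, but a quadratic can dip negative between two positive endpoint values, and for some admissible $(|\mu_1|,|\mu_2|,\beta)$ it actually does; "continuity anchored at $\theta^*=1$" proves nothing about sign on the interior. You would have to verify $D(\theta^*)\geq 0$ (or $K_+>0$) directly on the relevant sub-interval. Second, when $|\mu_1|\geq|\mu_2|$ there is a transition window $\tfrac{1}{1+|\mu_1|}\leq\theta^*\leq\tfrac{1}{1+|\mu_2|}$ with $A\leq 0$, $B\geq 0$, in which \emph{neither} the positive-coefficient integral estimates nor the $K$-method as formulated applies; this boundary regime needs its own argument and your proposal is silent on it. In short, the blow-up and rescaling are right and the endpoint cases are right, but the interior-$\theta^*$ Liouville theorem is not established by the argument you give.
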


\begin{proof}[Proof of Theorem \ref{th1.4}-$(ii)$]
By arguing as in Theorem \ref{th1.3} we let $\mathcal {T}$ be the compact operator defined in Lemma \ref{lem3.1}. We set $\mathcal{L}$ to be the operator:
\begin{equation}\label{c29}
\mathcal {L}(u,v)=\left(\mathcal {T}(-|\mu_1|u^p+\beta uv), \mathcal{T}(-|\mu_2|v^p+\frac{\beta}{2}u^2)\right),
\end{equation}
which is compact from $X\times X$ to itself. It is clear that nontrivial fixed points of \eqref{c29} correspondes to the nonnegative periodic solutions of \eqref{auto-11}. Indeed, if $(u,v)\neq(0,0)$ is a fixed point of $\mathcal {T}$, then we see $(u,v)=(w^+, z^+)$ and $(w, z)$ is $T$-periodic solution of
\begin{equation}
\label{c30}
\begin{cases}
\ds w_t-\Delta w=-|\mu_1|(w^+)^p+\beta w^+z^+,\ &x\in\Omega,\ t\in(0, \infty),\\
\ds z_t-\Delta z=-|\mu_2|(z^+)^p+\frac{\beta}{2}(w^+)^2,\
&x\in\Omega,\ t\in(0, \infty),\\
w(x,t)=z(x,t),\quad &\text{on}\ \partial\Omega\times(0, \infty).
\end{cases}
\end{equation}
By maximum principle we see that $w, z\geq0$. In addition, we claim that $w>0$ and $z>0$. In fact, if $z\equiv0$, we get that $w\equiv0$ from the second equation of \eqref{c30}. This
contradicts $(w,z)\neq(0,0)$. On the other hand, if $w\equiv0$, then $z$ satisfies
$$z_t-\Delta z=-|\mu_2|z^p.$$
Multiplying the above equation by $\varphi_1$ and integrating over $\Omega$ we have
\begin{equation*}
\frac{d}{dt}\int_{\Omega}z(\cdot,t)\varphi_1dx=\int_{\Omega}\left(-\lambda_1-|\mu_2|z^{p-1}\right)\varphi_1zdx<0.
\end{equation*}
It implies that the function $t\mapsto\int_{\Omega}z(\cdot,t)\varphi_1dx$ is time decreasing. This contradicts the periodicity of $z$. Hence both $u$ and $v$ are strictly positive.

In the end, repeating the arguments of Theorem \ref{th1.3}, we know that the Leray-Schauder degree of $F(r)=deg(I-\mathcal {T}, B_r, 0)=1$ for $r>0$ small and $F(r)=0$ for large $r>0$, which confirms the existence of a solution of \eqref{auto-11} and it finishes the proof.
\end{proof}

\bigskip
\section{Three coupled system}
\subsection{Liouville type results}
In this subsection we derive Liouville type results together with singularity and decay estimates for the three coupled system \eqref{auto-2}. As for Theorem \ref{th1.1} we apply Lemma \ref{lem-2.1} to derive the following interior estimates for \eqref{auto-2}.

\begin{lemma}
\label{lem-4.1}
Assume that $\mu_1, \mu_2, \mu_3, \beta>0$ and
$1<p<\frac{N(N+2)}{(N-1)^2}$. Let $\zeta\in\mathcal {D}(G)$ with
value in $[0,1]$ and $q>4$. Then there exists a constant $C=C(N,p,q)>0$ such that
\begin{equation}
\label{d-1}
\begin{split}
&\int_{G}\zeta^{q}\left(u^{-2}|\nabla u|^4+v^{-2}|\nabla v|^4+
w^{-2}|\nabla w|^4\right)dxdt+\int_{G}\zeta^{q}
\left(u^{p-1}|\nabla u|^2+v^{p-1}|\nabla v|^2+w^{p-1}|\nabla w|^2\right)dxdt\\
&\quad+\int_{G}\zeta^q(u_t^2+v_t^2+w_t^2)dxdt+\int_{G}\zeta^{q}\left(|\nabla u|^2vwu^{-1}+|\nabla v|^2uwv^{-1}+|\nabla w|^2uvw^{-1}\right)dxdt\\&\leq C\int_{G}\zeta^{q-2}\left(|\zeta_t|+|\nabla\zeta|^2\right)\left(u^{p+1}+v^{p+1}+w^{p+1}\right)dxdt+C\beta\int_G\zeta^{q-2}\left(|\zeta_t|+|\nabla\zeta|^2\right)uvwdxdt\\
&\quad+C\int_{G}\zeta^{q-4}\left(|\Delta\zeta|^2+|\nabla\zeta|^4+|\zeta_t|^2\right)\left(u^2+v^2+w^2\right)dxdt.
\end{split}
\end{equation}
\end{lemma}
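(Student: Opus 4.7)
The plan is to mirror the proof of Lemma \ref{lem-2.2}, applying Lemma \ref{lem-2.1} with $m=0$ separately to each of $u,v,w$ for a common exponent $d>0$ and test function $\xi=\zeta^q(\cdot,t)$. Substituting $\Delta u=u_t-\mu_1u^p-\beta vw$ (and analogues for $v,w$) into the three resulting inequalities and summing produces, on the left-hand side, the fourth-power gradient block $A_0\int\zeta^q(u^{-2}|\nabla u|^4+v^{-2}|\nabla v|^4+w^{-2}|\nabla w|^4)$ with $A_0=(2Nd-(N-1)d^2)/(4N)$, the diagonal block with coefficient $\frac{(N+2)d-2(N-1)p}{2N}\min\{\mu_1,\mu_2,\mu_3\}$, and the symmetric weighted block $\frac{(N+2)d}{2N}\beta\int\zeta^q\bigl(\tfrac{vw}{u}|\nabla u|^2+\tfrac{uw}{v}|\nabla v|^2+\tfrac{uv}{w}|\nabla w|^2\bigr)$. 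As in \eqref{b-15}, the hypothesis $1<p<p_B(N)$ with $N\le 5$ guarantees that $d$ may be chosen in the non-empty interval $\bigl(\max\{\tfrac{2(N-1)p}{N+2},\tfrac{4(N-1)}{N+2}\},\tfrac{2N}{N-1}\bigr)$, making all three coefficients strictly positive.

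The main obstacle is the bilinear gradient cross-coupling introduced by the cubic nonlinearities $\beta vw,\beta uw,\beta uv$. Integration by parts in $\beta\int\zeta^q\Delta u\cdot vw=-\beta\int\nabla\zeta^q\cdot\nabla u\,vw-\beta\int\zeta^q w\nabla u\cdot\nabla v-\beta\int\zeta^q v\nabla u\cdot\nabla w$ and its $v$- and $w$-analogues produce, after summation, the combination $-\tfrac{2(N-1)}{N}\beta\bigl[\int\zeta^q w\nabla u\cdot\nabla v+\int\zeta^q v\nabla u\cdot\nabla w+\int\zeta^q u\nabla v\cdot\nabla w\bigr]$ on the left-hand side. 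Cauchy--Schwarz with the natural weighting
\begin{equation*}
\Bigl|\int\zeta^q w\nabla u\cdot\nabla v\,dx\Bigr|\le\tfrac12\int\zeta^q\tfrac{vw}{u}|\nabla u|^2\,dx+\tfrac12\int\zeta^q\tfrac{uw}{v}|\nabla v|^2\,dx,
\end{equation*}
obtained by writing $w=\sqrt{vw/u}\cdot\sqrt{uw/v}$, together with its two analogues bounds these couplings by $\tfrac{2(N-1)}{N}\beta$ times the sum of the weighted gradient integrals, which is strictly dominated by the LHS coefficient $\tfrac{(N+2)d}{2N}\beta$ precisely because $d>\tfrac{4(N-1)}{N+2}$. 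The net positive weighted gradient coefficient is $\tfrac{(N+2)d-4(N-1)}{2N}\beta$, the three-component analogue of $C_3$ in \eqref{b-14}.

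The time-derivative integral $\int_G\zeta^q(u_t^2+v_t^2+w_t^2)$ is treated as in \eqref{b-17}--\eqref{b-21}: multiply the three equations by $\zeta^q u_t,\zeta^q v_t,\zeta^q w_t$ respectively and sum. The diagonal nonlinearities again produce an exact $\tfrac{d}{dt}$ of $\int\zeta^q(u^{p+1}+v^{p+1}+w^{p+1})/(p+1)$ up to $(\zeta^q)_t$ errors, and the algebraic identity $u_t vw+v_t uw+w_t uv=(uvw)_t$ collapses the cubic coupling into a single exact time-derivative $\beta\tfrac{d}{dt}\int\zeta^q uvw-\beta\int(\zeta^q)_t uvw$. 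Setting $h=\beta\int\zeta^q uvw\,dx$ in analogy with \eqref{b-18}, the time-derivative telescoping works exactly as in the two-component case. Applying Young's inequality term by term to every boundary-type contribution (in the spirit of \eqref{b-16} and \eqref{b-19}) generates $\varepsilon$-small copies of the positive left-hand side alongside the admissible integrands $\zeta^{q-2}(|\zeta_t|+|\nabla\zeta|^2)(u^{p+1}+v^{p+1}+w^{p+1})$, $\zeta^{q-2}(|\zeta_t|+|\nabla\zeta|^2)uvw$ and $\zeta^{q-4}(|\Delta\zeta|^2+|\nabla\zeta|^4+|\zeta_t|^2)(u^2+v^2+w^2)$. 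A final integration from $t_1$ to $t_2$, with $\zeta\in\mathcal{D}(G)$ of compact support, kills the time-boundary terms and yields \eqref{d-1}.
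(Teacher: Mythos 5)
Your proposal is correct and follows essentially the same route as the paper's proof of Lemma \ref{lem-4.1}: apply Lemma \ref{lem-2.1} with $m=0$ to each component, substitute the equations into the $\Delta u_i$ terms, sum over $i$, control the bilinear gradient cross-couplings via Cauchy--Schwarz with the weights $\sqrt{u_ju_k/u_i}$ (which reproduces the paper's coefficient $\tilde{C}_1=\tfrac{(N+2)d-4(N-1)}{2N}\beta$), and handle the $u_{i,t}^2$ block by multiplying the equations by $\zeta^q u_{i,t}$ and telescoping time derivatives. Your explicit use of the identity $u_tvw+v_tuw+w_tuv=(uvw)_t$ to collapse the cubic coupling into a single $\tfrac{d}{dt}h$ is a nice, cleanly stated observation; the paper arrives at the same conclusion implicitly when it sums the expression in \eqref{d-13} over $i$.
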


\begin{proof}
We shall follow the arguments as in Lemma \ref{lem-2.2}. First, we apply Lemma \ref{lem-2.1} for positive solutions $(u,v,w)$ of \eqref{auto-2}. To simplify our notation, we set $(u_1, u_2, u_3)=(u,v,w)$ and define
$\xi=\zeta^q=\zeta^q(t,\cdot)$ for any $t\in[t_1, t_2]$, with $m=0$ and any real $d\neq2$
\begin{equation}
\label{d-2}
\begin{split}
&\frac{\left(2Nd-(N-1)d^2\right)}{4N}\int_{\Omega}\zeta^qu_i^{-2}|\nabla u_i|^4dx-\frac{N-1}{N}\int_{\Omega}\zeta^q(\Delta u_i)^2dx -\frac{(N+2)d}{2N}\int_{\Omega}\zeta^qu_i^{-1}|\nabla u_i|^2\Delta u_idx\\
&\leq\frac{d}{2}\int_{\Omega}u_i^{-1}|\nabla u_i|^2\nabla u_i\nabla(\zeta^q)dx+\int_{\Omega}\Delta u_i\nabla u_i\nabla(\zeta^q)dx+\frac{1}{2}\int_{\Omega}|\nabla u_i|^2\Delta(\zeta^q)dx,\quad i=1,2,3.
\end{split}
\end{equation}
Next we shall give the estimates for \eqref{d-2} term by term. A direct computation shows that
\begin{equation}
\label{d-3}
\begin{split}
-\int_{\Omega}\zeta^q(\Delta u_i)^2dx&=-\int_{\Omega}\zeta^q\Delta
u_i(u_{i,t}-\mu_iu_i^p-\beta u_jv_k)dx\\
&=\frac{df_{u_i}}{dt}-P_{u_i}+\int_{\Omega}\nabla
u_i\nabla(\zeta^q)u_{i,t}dx-\mu_ip\int_{\Omega}\zeta^qu_i^{p-1}|\nabla u_i|^2dx-\mu_i\int_{\Omega}\nabla(\zeta^q)\nabla u_iu_i^{p}dx\\
&\quad-\beta\int_{\Omega}\nabla(\zeta^q)\nabla u_iu_ju_kdx-\beta\int_{\Omega}\zeta^q\nabla u_i\nabla
u_ju_kdx-\beta\int_{\Omega}\zeta^q\nabla u_i\nabla u_ku_jdx,
\end{split}
\end{equation}
and
\begin{equation}
\label{d-4}
\begin{split}
-\int_{\Omega}\zeta^qu_i^{-1}|\nabla u_i|^2\Delta u_idx&=-\int_{\Omega}\zeta^qu_i^{-1}|\nabla u_i|^2(u_{i,t}-\mu_iu_i^p-\beta u_ju_k)dx\\
&=-\int_{\Omega}\zeta^qu_i^{-1}|\nabla u_i|^2u_{i,t}dx+\mu_i\int_{\Omega}\zeta^qu_i^{p-1}|\nabla u_i|^2dx
+\beta\int_{\Omega}\zeta^q|\nabla u_i|^2u_i^{-1}u_ju_kdx,
\end{split}
\end{equation}
and
\begin{equation}
\label{d-5}
\begin{split}
\int_{\Omega}\Delta u_i\nabla u_i\nabla(\zeta^q)dx&=\int_{\Omega}\nabla u_i\nabla(\zeta^q)(u_{i,t}-\mu_iu_i^p-\beta u_ku_j)dx\\
&=\int_{\Omega}\nabla u_i\nabla(\zeta^q)u_{i,t}dx-\mu_i\int_{\Omega}\nabla u_i\nabla(\zeta^q)u_i^pdx-\beta\int_{\Omega}\nabla
u_i\nabla(\zeta^q)u_ku_jdx
\end{split}
\end{equation}
where $i=1,2,3$ ($i, j, k\in\{1, 2, 3\}$ are pairwise different), and
\begin{equation}
\label{d-6}
f_{u_i}=\frac{1}{2}\int_{\Omega}\zeta^q|\nabla u_i|^2dx\quad\text{and}\quad P_{u_i}=\frac{1}{2}\int_{\Omega}\left(\zeta^q\right)_t|\nabla
u_i|^2dx.
\end{equation}

Substituting \eqref{d-3}-\eqref{d-5} into \eqref{d-2}, we get
\begin{equation}
\label{d-7}
\begin{split}
&A_0\int_{\Omega}\zeta^qu_i^{-2}|\nabla u_i|^4dx+B_i\int_{\Omega}\zeta^qu_i^{p-1}|\nabla u_i|^2dx+C_1\int_{\Omega}\zeta^q|\nabla u_i|^2u_i^{-1}u_ju_kdx-\frac{N-1}{N}Q_{u_i}\\
&\leq-\frac{N-1}{N}\left(\frac{df_{u_i}}{dt}-P_{u_i}\right)+\frac{(N+2)d}{2N}Y_{u_i}+\frac{1}{N}Z_{u_i}-\frac{1}{N}V_{u_i}+\frac{d}{2}W_{u_i}+\frac{1}{2}R_{u_i}-\frac1NS_{u_i},
\end{split}
\end{equation}
where
\begin{equation}
\label{d-8}
\begin{split}
&A_0=\frac{2Nd-(N-1)d^2}{4N},\ B_i=\frac{(N+2)d-2(N-1)p}{2N}\mu_i,\
C_1=\frac{(N+2)d}{2N}\beta,\\
&Y_{u_i}=\int_{\Omega}\zeta^qu_{i}^{-1}|\nabla u_i|^2u_{i,t}dx,\
Z_{u_i}=\int_{\Omega}\nabla(\zeta^q)\nabla u_i u_{i,t}dx,\
V_{u_i}=\mu_i\int_{\Omega}\nabla(\zeta^q)\nabla u_iu_i^{p}dx,\\
&W_{u_i}=\int_{\Omega}u_i^{-1}|\nabla u_i|^2\nabla u_i\nabla(\zeta^q)dx,\ R_{u_i}=\int_{\Omega}\Delta(\zeta^q)|\nabla u_i|^2dx,\ S_{u_i}=\beta\int_{\Omega}\nabla u_i\nabla(\zeta^q)u_ju_kdx\\
&\text{and}\quad Q_{u_i}=\beta\int_{\Omega}\zeta^q\nabla u_i\nabla
u_ju_kdx+\beta\int_{\Omega}\zeta^q\nabla u_i\nabla u_ku_jdx.
\end{split}
\end{equation}
By H\"older's inequality we have
\begin{equation}
\label{d-9}
\begin{split}
2\beta\int_{\Omega}\zeta^q\nabla u_i\nabla u_ju_kdx\leq\beta\int_{\Omega}\zeta^q|\nabla u_i|^2u_i^{-1}u_ju_kdx+\beta\int_{\Omega}\zeta^q|\nabla
u_j|^2u_j^{-1}u_iu_kdx.
\end{split}
\end{equation}
Based on \eqref{d-9}, we take the summation of the inequality
\eqref{d-7} from $i=1$ to $3$ and get that
\begin{equation}
\label{d-10}
\begin{split}
&A_0\sum_{i=1}^{3}\int_{\Omega}\zeta^qu_i^{-2}|\nabla u_i|^4dx+\sum_{i=1}^{3}B_i\int_{\Omega}\zeta^qu_i^{p-1}|\nabla u_i|^2dx
+\tilde{C}_1\sum_{i=1}^{3}\int_{\Omega}\zeta^q|\nabla u_i|^2u_i^{-1}u_ju_kdx\\
&\leq-\frac{N-1}{N}\sum_{i=1}^{3}\left(\frac{df_{u_i}}{dt}-P_{u_i}\right)+\frac{(N+2)d}{2N}
\sum_{i=1}^{3}Y_{u_i}+\frac{1}{N}\sum_{i=1}^{3}Z_{u_i}\\&\quad-\frac{1}{N}\sum_{i=1}^{3}V_{u_i}+\frac{d}{2}\sum_{i=1}^{3}W_{u_i}+\frac{1}{2}\sum_{i=1}^{3}R_{u_i}-\frac1N\sum_{i=1}^{3}S_{u_i},
\end{split}
\end{equation}
where
\begin{equation}
\label{d-11}
\tilde{C}_1=\frac{(N+2)d-4(N-1)}{2N}\beta.
\end{equation}
Since $2\leq N\leq5$ and $p<\frac{N(N+2)}{(N-1)^2}$, we can choose
$d>0$ such that
\begin{equation}
\label{d-12}
\max\left\{\frac{2(N-1)p}{N+2},\frac{4(N-1)}{N+2}\right\}<d<\frac{2N}{N-1}.
\end{equation}
This guarantees that $A_0, B_i, \tilde{C}_1>0~(i=1,2,3)$.

Next, by Young's inequality, we can control $Y_{u_i}, Z_{u_i}, V_{u_i}, W_{u_i}, R_{u_i}$ and $S_{u_i}$ in a similar way as \eqref{b-16}. Then we give the estimation for the terms $\int_{\Omega}\zeta^qu_{i,t}^2,~i=1,2,3$.
A direct computation shows that
\begin{equation}
\label{d-13}
\begin{split}
\int_{\Omega}\zeta^qu_{i,t}^2dx&=\int_{\Omega}\zeta^qu_{i,t}(\Delta u_i+\mu_iu_i^p+\beta u_ju_k)dx\\
&=\frac{d(g_{u_i}-f_{u_i})}{dt}+P_{u_i}-Z_{u_i}-K_{u_i}+\frac{dh}{dt}-H_{u_j}-H_{u_k}-H_{\zeta},
\end{split}
\end{equation}
where
\begin{equation}
\label{d-14}
\begin{split}
&g_{u_i}=\frac{\mu_i}{p+1}\int_{\Omega}\zeta^qu_i^{p+1}dx,\
K_{u_i}=\frac{\mu_i}{p+1}\int_{\Omega}(\zeta^q)_tu_i^{p+1}dx,\
h=\beta\int_{\Omega}\zeta^qu_iu_ju_kdx\\
&H_{\zeta}=\beta\int_{\Omega}(\zeta^q)_tu_iu_ju_kdx,\
H_{u_j}=\beta\int_{\Omega}\zeta^qu_iu_{j,t}u_kdx~\text{and}~ H_{u_k}=\beta\int_{\Omega}\zeta^qu_iu_{k,t}u_jdx.
\end{split}
\end{equation}
Using Young's estimates for the terms $P_{u_i},~i=1,2,3$, we get that for any $\varepsilon^2>0$ small, there exists $C(\varepsilon^2)>0$ such that
\begin{equation}
\label{d-15}
\begin{split}
P_{u_i}=\frac{q}{2}\int_{\Omega}\zeta^{q-1}\zeta_t|\nabla
u_i|^2dx\leq\varepsilon^2\int_{\Omega}\zeta^{q}u_i^{-2}|\nabla u_i|^4dx+C(\varepsilon^2)\int_{\Omega}\zeta^{q-2}|\zeta_t|^2u_i^{2}dx,~i=1,2,3.
\end{split}
\end{equation}
Combining \eqref{d-13}-\eqref{d-15}, we deduce that
\begin{equation}
\label{d-16}
\begin{split}
\frac{1}{2}\sum_{i=1}^3\int_{\Omega}\zeta^qu_{i,t}^2dx&\leq\sum_{i=1}^3\frac{d(g_{u_i}-f_{u_i})}{dt}+2\varepsilon^2\sum_{i=1}^3\int_{\Omega}\zeta^{q}u_i^{-2}|\nabla u_i|^4dx+\frac{dh}{dt}+C\sum_{i=1}^3\int_{\Omega}\zeta^{q-1}|\zeta_t|u_i^{p+1}dx\\
&\quad+C(\varepsilon^2)\sum_{i=1}^3\int_{\Omega}(\zeta^{q-2}\zeta_t^2+\zeta^{q-4}|\nabla\zeta|^4)u_i^2dx+\beta q\int_{\Omega}\zeta^{q-1}|\zeta_t|u_1u_2u_3dx.
\end{split}
\end{equation}
Using the assumption that $\zeta\in[0,1]$, combining the estimates
for $Y_{u_i},Z_{u_i},V_{u_i},W_{u_i},R_{u_i},S_{u_i}$ (similar as
\eqref{b-16}), \eqref{d-10} and \eqref{d-15}-\eqref{d-16}, we obtain
that
\begin{equation}\label{d-18}
\begin{split}
&(A_0-\varepsilon)\sum_{i=1}^{3}\int_{\Omega}\zeta^qu_i^{-2}|\nabla u_i|^4dx+\sum_{i=1}^{3}(B_i-\varepsilon)
\int_{\Omega}\zeta^qu_i^{p-1}|\nabla u_i|^2dx
+(\tilde{C}_1-\varepsilon)\sum_{i=1}^{3}\int_{\Omega}\zeta^q|\nabla u_i|^2u_i^{-1}u_ju_kdx\\
&\leq C\sum_{i=1}^{3}\frac{df_{u_i}}{dt}+C(\varepsilon)\sum_{i=1}^3\frac{d(g_{u_i}-f_{u_i})}{dt}+C(\varepsilon)\frac{dh}{dt}+C(\varepsilon)\sum_{i=1}^3\int_{\Omega}\zeta^{q-2}(|\zeta_t|+|\nabla\zeta|^2)u_i^{p+1}dx\\
&\quad+C(\varepsilon)\beta \int_{\Omega}\zeta^{q-2}(|\zeta_t|+|\nabla\zeta|^2)u_1u_2u_3dx
+C(\varepsilon)\sum_{i=1}^3\int_{\Omega}\zeta^{q-4}(\zeta_t^2+|\nabla\zeta|^4+|\Delta\zeta|^4)u_i^2dx.
\end{split}
\end{equation}
Since $\zeta\in\mathcal {D}(G)$, it follows that
$$f_{u_i}(t_j)=g_{u_i}(t_j)=h(t_j)=0,\quad  i=1,2,3; j=1,2.$$ Integrating the inequality \eqref{d-18} from $t_1$ to $t_2$, we obtain that
\begin{equation}
\label{d-19}
\begin{split}
&\sum_{i=1}^{3}\int_{G}\zeta^qu_i^{-2}|\nabla u_i|^4dxdt+\sum_{i=1}^{3}\int_{G}\zeta^qu_i^{p-1}|\nabla
u_i|^2dxdt+\sum_{i=1}^{3}\int_{G}\zeta^q|\nabla u_i|^2u_i^{-1}u_ju_kdxdt\\
&\leq C\sum_{i=1}^3\int_{G}\zeta^{q-2}(|\zeta_t|+|\nabla\zeta|^2)u_i^{p+1}dxdt
+C\beta\int_{G}\zeta^{q-2}(|\zeta_t|+|\nabla\zeta|^2)u_1u_2u_3dxdt\\
&\quad+C\sum_{i=1}^3\int_{G}\zeta^{q-4}(\zeta_t^2+|\nabla\zeta|^4+|\Delta\zeta|^2)u_i^2dxdt.
\end{split}
\end{equation}
Finally, the estimation for the term
$\int_{G}\zeta^q(u_{1,t}^2+u_{2,t}^2+u_{3,t}^2)dxdt$ holds from
\eqref{d-18} and \eqref{d-19}. Hence we get \eqref{d-1} and it
finishes the proof.
\end{proof}

The next lemma gives the estimation for the nonlinear terms of \eqref{auto-2}.

\begin{lemma}
\label{lem-4.2}
Assume that the assumptions of Lemma \ref{lem-2.2} hold and
$q>\frac{4p}{p-1}$. Then there exists a positive constant $C=C(N,p,q)>0$ such that
\begin{equation}
\label{d-20}
\begin{split}
&\int_{G}\zeta^q\left[\left(\mu_1u^p+\beta vw\right)^2+\left(\mu_2v^p+\beta uw\right)^2+\left(\mu_3w^p+\beta
    uv\right)^2\right]dxdt\\
&\leq C\int_{G}\zeta^{q-\frac{4p}{p-1}}
\left(|\nabla\zeta|^2+|\Delta\zeta|+|\zeta_t|\right)^{\frac{2p}{p-1}}dxdt.
%+C\int_{G}\zeta^{q-8}\left(|\nabla\zeta|^2+|\Delta\zeta|+|\zeta_t|\right)^{4}.
\end{split}
\end{equation}
\end{lemma}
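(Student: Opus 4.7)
The plan is to mirror the strategy of Lemma \ref{lem-2.3}, now applied to the three coupled system \eqref{auto-2}. For each $i \in \{1,2,3\}$ (setting $(u_1,u_2,u_3)=(u,v,w)$ and writing the $i$-th nonlinearity as $F_i$, e.g.\ $F_1 = \mu_1 u^p + \beta vw$), I would multiply the $i$-th equation of \eqref{auto-2} by $\zeta^q F_i$ and integrate over $\Omega$, obtaining the master identity
\[
\int_\Omega \zeta^q F_i^2\,dx = \int_\Omega \zeta^q F_i\,u_{i,t}\,dx - \int_\Omega \zeta^q F_i\,\Delta u_i\,dx.
\]
I would then integrate by parts the Laplacian term, noting that the $\mu_i u_i^p$ piece generates $\mu_i p\int \zeta^q u_i^{p-1}|\nabla u_i|^2 + \mu_i \int u_i^p\,\nabla u_i\cdot\nabla(\zeta^q)$, while the cross piece (for instance $\beta vw \Delta u$) generates, via $\nabla(vw) = w\nabla v + v\nabla w$, the mixed terms
\[
\beta \int \zeta^q \nabla u\cdot\nabla v\,w\,dx + \beta \int \zeta^q \nabla u\cdot\nabla w\,v\,dx + \beta \int vw\,\nabla u\cdot\nabla(\zeta^q)\,dx.
\]
The time-derivative side similarly gives $\frac{d g_{u_i}}{dt} - K_{u_i}$ from the diagonal contribution and a cross contribution of the form $\beta \int \zeta^q u_j u_k\,u_{i,t}$.

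The key observation is that when I \emph{sum} the three identities, the cross time derivatives collapse by the product rule:
\[
\beta\!\!\sum_{\text{cyc}}\!\int \zeta^q u_j u_k u_{i,t}\,dx = \beta\int \zeta^q \frac{d}{dt}(u_1u_2u_3)\,dx = \frac{dh}{dt} - H_\zeta,
\]
where $h$ and $H_\zeta$ are exactly as in \eqref{d-14}. Since $\zeta\in\mathcal D(G)$, integration over $[t_1,t_2]$ eliminates $\frac{d g_{u_i}}{dt}$ and $\frac{dh}{dt}$. The mixed gradient terms $\int\zeta^q \nabla u_i\cdot\nabla u_j\,u_k$ are handled by Cauchy--Schwarz,
\[
\bigl|\nabla u_i\cdot\nabla u_j\bigr|u_k \leq \tfrac12\bigl(|\nabla u_i|^2 u_i^{-1} u_j u_k + |\nabla u_j|^2 u_j^{-1} u_i u_k\bigr),
\]
which is precisely the type of quantity controlled by the left-hand side of Lemma \ref{lem-4.1}.

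At this stage I would invoke Lemma \ref{lem-4.1} to bound every remaining integral involving $|\nabla u_i|^2$, $u_{i,t}^2$, $|\nabla u_i|^2 u_i^{-1}u_j u_k$, and mixed products against $|\nabla\zeta|^2, \zeta_t, \Delta\zeta$, obtaining an estimate of the form
\[
\sum_{i=1}^3 \int_G \zeta^q F_i^2\,dxdt \leq C\!\!\int_G \zeta^{q-2}(|\zeta_t|+|\nabla\zeta|^2)\bigl(u^{p+1}+v^{p+1}+w^{p+1}+\beta uvw\bigr)dxdt + C\!\!\int_G \zeta^{q-4}(\cdots)(u^2+v^2+w^2)dxdt.
\]
Finally I would apply Young's inequality with a small parameter $\varepsilon>0$ to split each right-hand-side integral as $\varepsilon \int \zeta^q(u^{2p}+v^{2p}+w^{2p}+\beta^2(uv)^2+\beta^2(vw)^2+\beta^2(uw)^2)$ plus a tail controlled by $\int \zeta^{q-\frac{4p}{p-1}}(|\nabla\zeta|^2+|\Delta\zeta|+|\zeta_t|)^{\frac{2p}{p-1}}$ (the condition $q>\frac{4p}{p-1}$ is used here). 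Since
\[
F_1^2 \geq \mu_1^2 u^{2p} + \beta^2 v^2 w^2, \qquad F_2^2\geq \mu_2^2 v^{2p} + \beta^2 u^2 w^2, \qquad F_3^2\geq \mu_3^2 w^{2p} + \beta^2 u^2 v^2,
\]
the first group is absorbed into the left-hand side for $\varepsilon$ small, yielding \eqref{d-20}.

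The main obstacle I anticipate is the bookkeeping of the fully three-way cross terms, especially the mixed gradient terms $\int\zeta^q \nabla u_i\cdot\nabla u_j\,u_k$ arising from $\int \zeta^q vw\,\Delta u$ (and its cyclic analogues). Unlike the two-component case in Lemma \ref{lem-2.3}, where the only cross term is $\int \zeta^q u\,\nabla u\cdot\nabla v$ and is absorbed directly, here three such mixed products appear simultaneously, and one must verify that the Cauchy--Schwarz splitting produces precisely the $|\nabla u_i|^2 u_i^{-1}u_j u_k$ quantities with coefficients compatible with the admissible constant $\tilde C_1$ in Lemma \ref{lem-4.1}; this is exactly why the range \eqref{d-12} of $d$ was chosen to make $\tilde C_1>0$.
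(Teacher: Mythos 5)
Your proposal is correct and follows essentially the same route as the paper's proof: multiply the $i$-th equation by $\zeta^q(\mu_i u_i^p + \beta u_j u_k)$, integrate by parts, sum over $i$, invoke Lemma~\ref{lem-4.1} to control the gradient and time-derivative integrals, then apply Young's inequality and absorb the $\varepsilon$-weighted $\int\zeta^q(u_i^{2p}+\beta^2(u_ju_k)^2)$ terms into the left-hand side using positivity of the nonlinearities. Your observation that the cross time-derivative contributions collapse to $\frac{dh}{dt}-H_\zeta$ by the product rule is a cleaner way to see what the paper tracks term-by-term via the $H_{u_j}, H_{u_k}$ bookkeeping in \eqref{d-14}--\eqref{d-21}, but the cancellation is identical. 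One small clarification on the worry you flag at the end: the compatibility of the Cauchy--Schwarz splitting with the constant $\tilde C_1$ (and the choice of $d$ in \eqref{d-12}) is entirely internal to the proof of Lemma~\ref{lem-4.1}; once that lemma is in hand, Lemma~\ref{lem-4.2} simply uses its conclusion as a black box to dominate the $Q_{u_i}$ and $S_{u_i}$ integrals, so there is no residual coefficient check to perform at this stage.
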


\begin{proof}
As Lemma \ref{lem-4.1} we set $(u_1, u_2, u_3)=(u,v,w)$. We deduce from the system \eqref{auto-2} that
\begin{equation}
\label{d-21}
\begin{split}
\int_{\Omega}\zeta^q\left(\mu_iu_i^p+\beta
u_ju_k\right)^2dx&=\int_{\Omega}\left(\mu_iu_i^p+\beta
u_ju_k\right)(u_{i,t}-\Delta u_i)\zeta^q\\
&=\frac{dg_{u_i}}{dt}-K_{u_i}+V_{u_i}+\mu_ip\int_{\Omega}\zeta^qu_i^{p-1}|\nabla u_i|^2dx+\frac{dh}{dt}-H_{u_j}\\
&\quad-H_{u_k}-H_{\zeta}+Q_{u_i}+S_{u_i}
\end{split}
\end{equation}
where $i=1,2,3$ and $i, j, k\in\{1, 2, 3\}$ are pairwise different. Similar to \eqref{d-19}, we infer from Lemma \ref{lem-4.1} and \eqref{d-21} that
\begin{equation}
\label{d-22}
\begin{split}
\sum_{i=1}^3\int_{G}\zeta^q\left(\mu_iu_i^p+\beta u_ju_k\right)^2dxdt\leq~&
C\sum_{i=1}^3\int_{G}\zeta^{q-2}(|\zeta_t|+|\nabla\zeta|^2)u_i^{p+1}dxdt+C\beta\int_{G}\zeta^{q-2}(|\zeta_t|+|\nabla\zeta|^2)u_1u_2u_3dxdt\\
&+C\sum_{i=1}^3\int_{G}\zeta^{q-4}(\zeta_t^2+|\nabla\zeta|^4+|\Delta\zeta|^2)u_i^2dxdt\\
\leq~&\varepsilon\left(\sum_{i=1}^3\int_{G}\zeta^qu_i^{2p}dxdt+\beta^2\int_{G}\zeta^q(u_1^{2}u_2^2+u_2^{2}u_3^2+u_1^{2}u_3^2)dxdt
\right)\\
&+C(\varepsilon)\int_{G}\zeta^{q-\frac{4p}{p-1}}(|\zeta_t|+|\Delta\zeta|+|\nabla\zeta|^2)^{\frac{2p}{p-1}}dxdt,
\end{split}
\end{equation}
where we have used the Young's inequality. Hence we get \eqref{d-20}.
\end{proof}

Now we give the proof of Theorem \ref{th1.6}.

\begin{proof}[Proof of Theorem \ref{th1.6}]
We first prove the conclusion $(i)$. For each $R>0$, we define
\begin{equation}
\label{d-23}
(u^R, v^R, w^R)=\left(R^{\frac{2}{p-1}}u(Rx, R^2t),
R^{\frac{2}{p-1}}v(Rx, R^2t), R^{\frac{2}{p-1}}w(Rx, R^2t)\right).
\end{equation}
Thus, if $(u,v,w)$ satisfies the system \eqref{auto-2}, then we get that $(u^R, v^R, w^R)$ satisfies
\begin{equation}
\label{d-24}
\begin{cases}
\ds u_t^R-\Delta u^R=\mu_1(u^R)^p+\tilde{\beta}_Rw^Rv^R,\ &(x,t)\in\mathbb{R}^{N}\times\mathbb{R},\\
\ds v_t^R-\Delta v^R=\mu_2(v^R)^p+\tilde{\beta}_Rw^Ru^R,\
&(x,t)\in\mathbb{R}^{N}\times\mathbb{R},\\
\ds w_t^R-\Delta w^R=\mu_3(w^R)^p+\tilde{\beta}_Ru^Rv^R,\
&(x,t)\in\mathbb{R}^{N}\times\mathbb{R},
\end{cases}
\end{equation}
where $\tilde{\beta}_R=\beta R^{\frac{2(p-2)}{p-1}}$. Repeating the arguments in Lemma \ref{lem-4.1}-\ref{lem-4.2} we derive from H\"older's and Young's inequalities that for any $\varepsilon>0$ small, there exists $C(\varepsilon)>0$ such that
\begin{equation}
\label{d-25}
\begin{split}
&\int_{G}\left[\left(\mu_1(u^R)^p+\tilde{\beta}_Rw^Rv^R\right)^2+\left(\mu_2(v^R)^p+\tilde{\beta}_Rw^Ru^R\right)^2+\left(\mu_3(w^R)^p+\tilde{\beta}_Ru^Rv^R\right)^2\right]dydt\\&\leq C\tilde{\beta}_R\int_{G}\zeta^{q-2}(|\zeta_t|+|\nabla\zeta|^2)
u^Rv^Rw^Rdydt+C\int_{G}\left(|\Delta\zeta|^2+|\nabla\zeta|^4+|\zeta_t|^2\right)\zeta^{q-4}((u^R)^2+(v^R)^2+(w^R)^2)dydt\\
&\quad+C\int_{G}\zeta^{q-2}(|\zeta_t|+|\nabla\zeta|^2)((u^R)^{p+1}+(v^R)^{p+1}+(u^R)^{p+1})dydt\\
&\leq\varepsilon\tilde{\beta}_R^2\int_{G}\zeta^q((u^R)^2(v^R)^2+(u^R)^2(w^R)^2+(w^R)^2(v^R)^2)dydt+\varepsilon\int_{G}\zeta^q((u^R)^{2p}+(u^R)^{2p}+(w^R)^{2p})dydt\\
&\quad+C(\varepsilon)\int_{G}\zeta^{q-\frac{4p}{p-1}}\left(|\Delta\zeta|+|\nabla\zeta|^2+|\zeta_t|\right)^{\frac{2p}{p-1}}dydt,
\end{split}
\end{equation}
where $q>\frac{4p}{p-1}$, and $C, C(\varepsilon)$ are independent of $R$. We can see the first two terms can be controlled by the left-hand side, then
\begin{equation}
\label{d-26}
\begin{split}
&\int_{G}\left[\left(\mu_1(u^R)^p+\tilde{\beta}_Rw^Rv^R\right)^2+\left(\mu_2(v^R)^p+\tilde{\beta}_Rw^Ru^R\right)^2+\left(\mu_3(w^R)^p+\tilde{\beta}_Ru^Rv^R\right)^2\right]dydt\\
&\leq~C(\varepsilon)\int_{G}\zeta^{q-\frac{4p}{p-1}}\left(|\Delta\zeta|+|\nabla\zeta|^2+|\zeta_t|\right)^{\frac{2p}{p-1}}dydt\leq C.
\end{split}
\end{equation}
Hence we deduce from \eqref{d-26} that
\begin{equation}
\label{d-30}
\begin{split}
&\int_{-R^2}^{R^2}\int_{|y|<R}\left[\left(\mu_1u^p+\beta uv\right)^2+\left(\mu_2v^p+\beta wu\right)^2+\left(\mu_3w^p+\beta uv\right)^2\right]dydt\\
&=R^{N+2-\frac{4p}{p-1}}\int_{-1}^{1}\int_{|y|<1}\left[\left(\mu_1(u^R)^p+\tilde{\beta}_Ru^Rv^R\right)^2+\left(\mu_2(v^R)^p+\tilde{\beta}_Ru^Rw^R\right)^2+\left(\mu_3(w^R)^p+\tilde{\beta}_Ru^Rv^R\right)^2\right]dydt\\
&\leq CR^{N+2-\frac{4p}{p-1}}.
\end{split}
\end{equation}
Since $p<\frac{N(N+2)}{(N-1)^2}<p_S(N)$, by letting $R\rightarrow\infty$, we conclude that the right-hand side of \eqref{d-30} converges to $0$. As a consequence, we deduce that $u=v=w=0$.

The second conclusion of Theorem \ref{th1.6} can be proved following the same arguments as in Theorem \ref{th1.1}-$(ii)$, so we omit the details here.
\end{proof}

\subsection{Periodic solutions}
In this subsection we focus on the existence of periodic solutions to the three coupled system \eqref{auto-13}. Let $X=BUC(\Omega\times(0,T))$ denote the space of the bounded uniformly continuous functions equipped with the $L^\infty$-norm $\|\cdot\|_\infty$. For each $z$, we denote $z^+(x,t)=\max\{z(x,t),0\}$. By slightly abuse of notation, we use $\|\cdot\|_\infty$ to denote the norm for both
$L^\infty(\Omega\times(0,T))$ and $L^\infty(\Omega)$.

As Theorem \ref{th1.3} we introduce the following homotopy problem and show the universal bounds for the corresponding periodic solutions
\begin{equation}
\label{d-31}
\begin{cases}
\ds u_t-\Delta u=\theta(\mu_1u^p+\beta uv)+(1-\theta)(\lambda u+u^2),\ &x\in\Omega,\ t\in(0, \infty),\\
\ds v_t-\Delta v=\theta(\mu_2v^p+\beta uw)+(1-\theta)(\lambda
v+v^2),\ &x\in\Omega,\
t\in(0, \infty),\\
\ds w_t-\Delta w=\theta(\mu_3w^p+\beta uv)+(1-\theta)(\lambda
w+w^2),\ &x\in\Omega,\ t\in(0, \infty),\\
u(x,t)=v(x,t)=w(x,t)=0,\quad &\text{on}\ \partial\Omega\times(0,
\infty),
\end{cases}
\end{equation}
where $\theta\in[0,1]$ and $\lambda>0$ is a constant. To accomplish
this we prove Liouville type results for the following
system
\begin{equation}
\label{d-32}
\begin{cases}
\ds u_t-\Delta u=\theta\left(\mu_1u^2+\beta vw\right)+(1-\theta)u^2,\ &x\in\Omega,\ t\in\mathbb{R},\\
\ds v_t-\Delta v=\theta\left(\mu_2v^2+\beta
uw\right)+(1-\theta)v^2,\ &x\in\Omega,\ t\in\mathbb{R},\\
\ds w_t-\Delta w=\theta\left(\mu_3w^2+\beta
uv\right)+(1-\theta)w^2,\ &x\in\Omega,\ t\in\mathbb{R},
\end{cases}
\end{equation}
where $\mu_1, \mu_2, \mu_3, \beta>0$ and $\theta\in[0,1]$. By using the same proof of Theorem \ref{th1.6}-$(i)$, we can
obtain the following result.

\begin{lemma}
\label{lem4.3}
Suppose that $N\leq5,$ $\mu_1, \mu_2, \mu_3, \beta>0,$ $\theta\in[0,1]$ and $\Omega=\mathbb{R}^{N}$. Then the system \eqref{d-32} has no nontrivial nonnegative classical solution.
\end{lemma}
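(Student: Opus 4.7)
The plan is to reduce this lemma to the machinery that proved Theorem \ref{th1.6}(i). Collecting the two $u_i^2$ contributions on each right-hand side of \eqref{d-32}, we rewrite the homotopy system as
\begin{equation*}
\begin{cases}
u_t - \Delta u = \tilde\mu_1 u^2 + \tilde\beta\, v w, \\
v_t - \Delta v = \tilde\mu_2 v^2 + \tilde\beta\, u w, \\
w_t - \Delta w = \tilde\mu_3 w^2 + \tilde\beta\, u v,
\end{cases}
\qquad (x,t) \in \mathbb{R}^N \times \mathbb{R},
\end{equation*}
with $\tilde\mu_i = \theta\mu_i + (1-\theta) > 0$ and $\tilde\beta = \theta\beta \geq 0$. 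This has exactly the structure of \eqref{auto-2} with $p=2$ and strictly positive diagonal coefficients; the coupling degenerates only when $\theta = 0$.

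The degenerate case $\theta = 0$ is immediate: the equations decouple into three independent scalar problems $u_{i,t} - \Delta u_i = u_i^2$ on $\mathbb{R}^N \times \mathbb{R}$, and since $2 < p_B(N)$ for every $N \leq 5$, each component vanishes by the classical Liouville theorem of V\'eron \cite{Bidaut-1998-Initial}. For $\theta \in (0,1]$ we have $\tilde\beta > 0$, and the derivations of Lemmas \ref{lem-4.1} and \ref{lem-4.2} go through verbatim with $\mu_i, \beta$ replaced by $\tilde\mu_i, \tilde\beta$, since those proofs use only positivity of the diagonal and coupling coefficients together with the existence of an admissible parameter $d$ in the range \eqref{d-12}. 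At $p = 2$ this range becomes $4(N-1)/(N+2) < d < 2N/(N-1)$, which is nonempty precisely when $N \leq 5$.

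The conclusion then follows from the scaling argument of Theorem \ref{th1.6}(i). Setting $(u^R, v^R, w^R) = (R^2 u(Rx, R^2 t),\, R^2 v(Rx, R^2 t),\, R^2 w(Rx, R^2 t))$ preserves the equation, because at $p = 2$ the rescaled coupling $\tilde\beta_R = \tilde\beta R^{2(p-2)/(p-1)} = \tilde\beta$ is independent of $R$. Applying Lemma \ref{lem-4.2} to the rescaled triple with a fixed cutoff $\zeta$ supported in a reference cylinder, and then undoing the change of variables, yields
\begin{equation*}
\int_{-R^2}^{R^2}\!\!\int_{|y|<R} \Bigl[(\tilde\mu_1 u^2 + \tilde\beta v w)^2 + (\tilde\mu_2 v^2 + \tilde\beta u w)^2 + (\tilde\mu_3 w^2 + \tilde\beta u v)^2\Bigr]\, dy\, dt \;\leq\; C R^{N-6},
\end{equation*}
and the hypothesis $N \leq 5$ makes the exponent strictly negative, so letting $R \to \infty$ forces every nonlinear term to vanish identically, hence $u = v = w \equiv 0$. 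The main obstacle is really only the bookkeeping check that $N \leq 5$ plays the double role of providing an admissible Bochner parameter $d$ and making the scaling exponent $N + 2 - 4p/(p-1)$ negative at $p = 2$; once these two roles are confirmed, the argument is structurally identical to that of Theorem \ref{th1.6}(i).
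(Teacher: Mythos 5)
Your proposal is correct and takes the same route the paper itself indicates (the paper gives no separate proof, only the remark ``by using the same proof of Theorem \ref{th1.6}-$(i)$''). The recombination $\tilde\mu_i=\theta\mu_i+(1-\theta)>0$, $\tilde\beta=\theta\beta\geq0$ reduces \eqref{d-32} exactly to \eqref{auto-2} with $p=2$; the degenerate case $\theta=0$ is dispatched by V\'eron's scalar Liouville theorem, and for $\theta\in(0,1]$ the scale-invariance at $p=2$ (since $R^{2(p-2)/(p-1)}=1$) together with the estimate of Lemma \ref{lem-4.2} yields the decay $CR^{N-6}\to0$. Your bookkeeping of the two roles of $N\leq5$ --- nonemptiness of the $d$-interval in \eqref{d-12} and negativity of $N+2-4p/(p-1)$ at $p=2$ --- is accurate.
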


Based on the above lemma, we are able to prove the universal bounds for the periodic solutions of \eqref{d-31}.

\begin{lemma}
\label{lem4.4}
Suppose that $N\leq 5$, $\mu_1, \mu_2, \mu_3, \beta>0, \theta\in[0,1]$ and $2<p<p_B(N)$. Then there exists a constant $C>0$ such that any positive $T$-periodic solution $(u,v)$ of \eqref{d-31} satisfies
\begin{equation}
\label{d-33}
u(x,t)+v(x,t)+w(x,t)\leq C\quad\text{for\ all}\ (x,t)\in\Omega\times(0,\infty).
\end{equation}
\end{lemma}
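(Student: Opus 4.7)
The plan is to adapt the doubling-and-rescaling argument of Lemma \ref{lem3.3} to the three-component homotopy system \eqref{d-31}. Arguing by contradiction, if the bound \eqref{d-33} fails, there exist domains $D_k = \Omega_k \times (0, T_k)$, positive $T$-periodic solutions $(u_k, v_k, w_k)$ with homotopy parameters $\theta_k \in [0,1]$, and points $(y_k, \tau_k) \in D_k$ at which $M_k := (u_k + v_k + w_k)^{(p-1)/2}$ satisfies $M_k(y_k, \tau_k) > 2k$. The doubling lemma (Lemma \ref{lem-2.4}) applied to $M_k$ then produces $(x_k, t_k) \in D_k$ obeying the three properties $M_k(x_k, t_k) \geq M_k(y_k, \tau_k)$, $M_k(x_k, t_k) > 2k$, and $M_k(x,t) \leq 2 M_k(x_k, t_k)$ throughout the parabolic ball of radius $k M_k(x_k, t_k)^{-1}$ centered at $(x_k, t_k)$.

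Setting $\lambda_k := M_k(x_k, t_k)^{-1} \to 0$, introduce
\begin{equation*}
(\hat u_k, \hat v_k, \hat w_k)(y,s) := \lambda_k^{2/(p-1)} (u_k, v_k, w_k)(x_k + \lambda_k y,\ t_k + \lambda_k^2 s)
\end{equation*}
on $\hat D_k = \{|y| < k/2\} \times (-k^2/4,\, k^2/4)$. A direct scaling computation shows
\begin{equation*}
\partial_s \hat u_k - \Delta_y \hat u_k = \theta_k\!\left(\mu_1 \hat u_k^p + \beta \lambda_k^{2(p-2)/(p-1)} \hat v_k \hat w_k\right) + (1-\theta_k)\!\left(\lambda \lambda_k^2 \hat u_k + \lambda_k^{2(p-2)/(p-1)} \hat u_k^2\right),
\end{equation*}
together with the cyclic analogues for $\hat v_k$ and $\hat w_k$, as well as the normalization $\hat u_k(0,0) + \hat v_k(0,0) + \hat w_k(0,0) = 1$ and the uniform bound $\hat u_k + \hat v_k + \hat w_k \leq 2^{2/(p-1)}$ on $\hat D_k$. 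Since $p > 2$, both $\lambda_k^2$ and $\lambda_k^{2(p-2)/(p-1)}$ tend to zero, so the coupling term and the entire $(1-\theta_k)$-contribution are negligible in the limit.

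Standard parabolic interior estimates together with Arzel\`a--Ascoli allow passing to a subsequence along which $\theta_k \to \theta_\infty \in [0,1]$ and $(\hat u_k, \hat v_k, \hat w_k) \to (\hat u, \hat v, \hat w)$ in $C^2_{\mathrm{loc}}(\mathbb{R}^N \times \mathbb{R})$. The limit is a bounded nonnegative solution of the decoupled system
\begin{equation*}
\partial_t \hat u = \Delta \hat u + \theta_\infty \mu_1 \hat u^p,\qquad \partial_t \hat v = \Delta \hat v + \theta_\infty \mu_2 \hat v^p,\qquad \partial_t \hat w = \Delta \hat w + \theta_\infty \mu_3 \hat w^p,
\end{equation*}
with nontrivial normalization $\hat u(0,0) + \hat v(0,0) + \hat w(0,0) = 1$. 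When $\theta_\infty > 0$, the scalar subcritical Liouville theorem \cite[Theorem A]{Souplet-IUMJ-2007} (valid for $1 < p < p_B(N)$) applies to each equation individually and forces $\hat u \equiv \hat v \equiv \hat w \equiv 0$, yielding the contradiction.

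The main obstacle is the degenerate case $\theta_\infty = 0$, in which the $p$-scaling annihilates every nonlinear term and the limit merely solves the heat equation, whose bounded nonnegative solutions on $\mathbb{R}^N \times \mathbb{R}$ are constants and thus fail to yield an immediate contradiction. This is handled by restarting the blow-up with the alternative $(p=2)$-type scaling $\tilde \lambda_k := (u_k + v_k + w_k)^{-1/2}(x_k, t_k)$, which preserves the dominant quadratic term $(1-\theta_k) u_k^2$ in the rescaled equation; the resulting limit is then a nontrivial nonnegative classical solution of the auxiliary system \eqref{d-32} at $\theta_\infty = 0$, and a final invocation of Lemma \ref{lem4.3} (the three-component analogue of Lemma \ref{lem3.2}) closes the contradiction and establishes the universal bound \eqref{d-33}.
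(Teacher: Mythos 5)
Your core argument — contradiction via the Pol\'a\v{c}ik--Quittner--Souplet doubling lemma, rescaling with $\lambda_k=M_k^{-1}$, observing that for $p>2$ the cross-terms $\lambda_k^{2(p-2)/(p-1)}\hat v_k\hat w_k$, the quadratic terms $\lambda_k^{2(p-2)/(p-1)}\hat u_k^2$ and the linear terms $\lambda\lambda_k^2\hat u_k$ all vanish in the limit, leaving decoupled scalar equations to which \cite[Theorem~A]{Souplet-IUMJ-2007} applies — is exactly what the paper does (the paper's proof of Lemma~\ref{lem4.4} is a one-liner pointing to the proof of Lemma~\ref{lem3.3}). So the skeleton matches.

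You go further than the paper in explicitly flagging the degeneracy when $\theta_k\to0$: indeed the coefficient of $\hat u^p$ in the rescaled limit is $\theta_\infty\mu_1$, and if $\theta_\infty=0$ the limit is the heat equation, for which bounded nonnegative entire solutions are constants, so no contradiction arises. This is a real subtlety that the paper's argument glosses over, and noticing it is to your credit. However, your proposed remedy is not correct as stated. First, the doubling bound you obtained is for the normalization $M_k=(u_k+v_k+w_k)^{(p-1)/2}$, and it controls $M_k$ on the parabolic ball of radius $kM_k^{-1}(x_k,t_k)$. Under the alternative scaling $\tilde\lambda_k=(u_k+v_k+w_k)^{-1/2}(x_k,t_k)$ the image of that ball has radius $kM_k^{-1}/\tilde\lambda_k=kL_k^{-(p-2)/2}\to0$ (where $L_k:=(u_k+v_k+w_k)(x_k,t_k)\to\infty$), so the domain where you control the rescaled functions shrinks to a point; you cannot ``restart'' with a different scaling unless you also re-run the doubling lemma with $\tilde M_k=(u_k+v_k+w_k)^{1/2}$. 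Second, and more seriously, even if you do re-apply doubling to $\tilde M_k$, the quadratically rescaled equation reads
\begin{equation*}
\partial_s\hat u_k-\Delta\hat u_k=\theta_k\mu_1 L_k^{p-2}\hat u_k^p+\theta_k\beta\,\hat v_k\hat w_k+(1-\theta_k)\lambda L_k^{-1}\hat u_k+(1-\theta_k)\hat u_k^2,
\end{equation*}
and the coefficient $\theta_k L_k^{p-2}$ on the $p$-nonlinearity need not tend to $0$ when $\theta_k\to0$; it can tend to $\infty$, in which case the right-hand side is not uniformly bounded and the compactness step (interior parabolic estimates plus Arzel\`a--Ascoli) breaks down. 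A correct treatment requires a further dichotomy on whether $\theta_kL_k^{p-2}$ stays bounded or not, with a third, intermediate scaling (for instance $\rho_k=(\theta_kL_k^{p-1})^{-1/2}$, or a scaling built from $\theta_kL_k^{p}+(1-\theta_k)L_k^2$) in the unbounded case; none of that is in your proposal. Since you explicitly advertise handling of the $\theta_\infty=0$ case as part of your proof, the argument as written has a genuine gap there.
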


\begin{proof}
This follows the same proof of Lemma \ref{lem3.3} and we omit the details.
%We follow the proof of Lemma \ref{lem3.3} Theorem \ref{thoe1.1} $(ii)$ (2). If \eqref{d-33} is not true, then there exists a sequence $D_k:=\Omega_k \times(0, T_k), (u_k, v_k)$ and $(y_k, \tau_k)\in D_k$ such that
%\begin{equation}
%\label{d-34}
%M_k(x,t)\mid_{x=y_k,t=\tau_k}=(u_k+v_k)^{\frac{p-1}{2}}>2k,
%\end{equation}
%From Lemma \ref{lem-2.4}, we infer that there exists $(x_k, t_k)\in D_k$ such that
%\begin{equation}
%\label{d-35}
%\begin{split}
%&M_k(x_k, t_k)\geq M_k(y_k, \tau_k),\quad M_k(x_k, t_k)>2k,\\  &M_k(x, t)\leq2M_k(x_k, t_k)\quad \text{whenever}\quad d_P((y_k,
%\tau_k), \partial D_k)\leq k\lambda_k\ \text{and}\
%\lambda_k=M_k^{-1}.
%\end{split}
%\end{equation}
\end{proof}

Now we give the proof of Theorem \ref{th1.7}.

\begin{proof}[Proof of Theorem \ref{th1.7}.]
Let $\mathcal {T}$ be the compact map by Lemma \ref{lem3.1}. We define the compact operator $\mathcal {L}: X\times X\times X\rightarrow X\times X\times X$ by
\begin{equation}
\label{d-41}
\mathcal {L}(u,v)=\left(\mathcal {T}(\mu_1u^p+\beta vw), \mathcal {T}(\mu_2v^p+\beta uw), \mathcal {T}(\mu_3w^p+\beta uv)\right).
\end{equation}
It is clear that the nontrivial fixed point of \eqref{d-41} corresponds to a nonnegative periodic solutions of \eqref{auto-13}. Indeed, if $(u,v,w)\neq(0,0)$ is a fixed point of $\mathcal {T}$, then we see $(u,v,w)=(z^+, h^+, m^+)$ and $(z, h,m)$ is a $T$-periodic solution of
\begin{equation}
\label{d-42}
\begin{cases}
\ds z_t-\Delta z=\mu_1(z^+)^p+\beta m^+h^+,\ &x\in\Omega,\ t\in(0, \infty),\\
\ds h_t-\Delta h=\mu_2(h^+)^p+\beta m^+z^+,\ &x\in\Omega,\
t\in(0, \infty),\\
\ds m_t-\Delta m=\mu_3(m^+)^p+\beta z^+h^+,\ &x\in\Omega,\
t\in(0, \infty),\\
z(x,t)=h(x,t)=m(x,t)=0,\quad &(x,t)\in \partial\Omega\times(0,
\infty).
\end{cases}
\end{equation}
Using the maximum principle we know that $z, h, m\geq0$. Furthermore, we claim that $z, h, m>0$. In fact, if $z\equiv0$, then $h$ and $m$ satisfies
\begin{equation}
\label{d-421}
h_t-\Delta h=\mu_2h^p,\quad
m_t-\Delta m=\mu_3m^p
\end{equation}
respectively. From the proof of Lemma \ref{lem3.3} we get that  any periodic solution $h$, $m$ of \eqref{d-421} are bounded. That is, $h(x,t),m(x,t)\leq C$, where $C>0$. We shall use $m$ as an example to show that $m\equiv0$. Multiplying the equation of $m$ in \eqref{d-421} by $\psi$ (see Lemma \ref{lem3.1} for the definition of $\psi$) and integrating over $\Omega$, we get
\begin{equation*}
\frac{d}{dt}\int_{\Omega}m(\cdot,t)\psi dx=\int_{\Omega}\left(-\lambda_1^T+\mu_3m^{p-1}\right)\psi mdx<0
\end{equation*}
for $\mu_3\geq0$ sufficiently small. It means that the function
$t\mapsto\int_{\Omega}m(\cdot,t)\psi dx$ is strictly decreasing in time. Together with the periodicity of $m$ we derive that $m\equiv0$. Similarly $h\equiv0$, and it contradicts $(z,m,h)$ is nontrivial. Hence $z$ can not be $0$ identically. By strong maximum principle we get $z>0$. By the same argument we get both $h$ and $m$ are strictly positive.

In the end, following the proof of Theorem \ref{th1.3}, we know that the Leray-Schauder degree of $F(r)=deg(I-\mathcal {T}, B_r, 0)=1$ for $r>0$ small and $F(r)=0$ for large $r>0$, and it implies the existence of the periodic solutions to \eqref{auto-13}. Hence, we finish the proof.
\end{proof}

\vspace{2cm}
\begin{center}
{\bf Acknowledgement}
\end{center}

The second author is partially supported by NSFC No. 11971202, the
Outstanding Young foundation of Jiangsu Province No. BK2020010118
and the Six big talent peaks project in Jiangsu Province(XYDXX-015);
The third author is partially supported by NSFC No.11801550 and NSFC
No.11871470.

\vspace{1.5cm}

\end{document}